%% file: paper.tex
\documentclass[11pt]{amsart}

\usepackage{amsmath}
\usepackage{amssymb}
\usepackage{amsthm}
\usepackage{mathtools}
\usepackage{enumitem}
\usepackage{microtype}
\usepackage[colorlinks=true,linkcolor=blue,citecolor=blue,urlcolor=blue]{hyperref}
\usepackage[nameinlink,capitalise,noabbrev]{cleveref}

\newtheorem{theorem}{Theorem}[section]
\newtheorem{proposition}[theorem]{Proposition}
\newtheorem{lemma}[theorem]{Lemma}
\newtheorem{corollary}[theorem]{Corollary}

\theoremstyle{definition}
\newtheorem{definition}[theorem]{Definition}

\theoremstyle{remark}
\newtheorem{remark}[theorem]{Remark}

\numberwithin{equation}{section}

\newcommand{\dd}{\mathrm{d}}

\title[Sharp Regularity Thresholds]{Sharp Regularity Thresholds for Viscous Katz--Pavlovi\'c Dyadic Models}
\author{Ziqian Zhang}
\address{School of Science, Westlake University, Hangzhou, Zhejiang, China}
\email{zhangziqian@westlake.edu.cn}
\subjclass[2020]{35Q30, 35B44, 76D03}
\keywords{dyadic model, global regularity, finite-time blow-up, critical dissipation, super-exponential shells}
\date{}

\begin{document}
\begin{abstract}
\input{sections/abstract}
\end{abstract}
\maketitle
\tableofcontents
\input{sections/1-introduction}
\input{sections/2-preliminaries}
\input{sections/3-acceleration}
\input{sections/4-superexp-blow-up}
\input{sections/5-superexp-regularity}
\input{sections/6-classical-regularity}
\input{sections/7-acknowledgments}
\clearpage
\bibliographystyle{plain}
\bibliography{references}
\end{document}

%% file: sections/abstract.tex
We study the unforced viscous Katz--Pavlovi\'c dyadic model with non-negative smooth initial data for two shell settings. For super-exponential scales $N_n=N_0^{b^n}$, with $N_0>1$ and $1<b<2$, we prove that $\alpha=1/(b+2)$ is the sharp dissipation threshold: every such datum produces a globally smooth solution at and above this value, whereas suitable compactly supported data lose regularity in finite time below it. For geometric scales $N_n=N_0\Lambda^n$, global regularity holds when $\alpha\geq1/3$ and $4\alpha-1>(\log2)/(2\log\Lambda)$. In particular, for $\Lambda>2^{3/2}$, the result reaches $\alpha=1/3$ and, together with the known classical blow-up theorem, identifies the sharp threshold in classical shell ratios. 

%% file: sections/1-introduction.tex
\section{Introduction}
\label{sec:introduction}

The dyadic model is a simplified system of ordinary differential equations used to study the energy transfer in the incompressible Navier--Stokes equations. Viscous Katz--Pavlovi\'c dyadic models describe the competition between nonlinear energy transfer and dissipation. The models in Katz and Pavlovi\'c \cite{KatzPavlovic2005} and Friedlander and Pavlovi\'c \cite{FriedlanderPavlovic2004} retain interactions between neighboring frequency shells. 

We investigate how the dissipation parameter affects the global regularity. The method developed here originated in the study of super-exponential shells and was subsequently applied to the classical model with geometric shells. Its main ingredients are a weighted estimate for the time derivatives and a backward propagation principle. Together they yield a sharp threshold for the super-exponential model and reach the critical exponent in the classical model under an explicit condition on the shell ratio.

\subsection{The models and functional setting}
\label{subsec:introduction-models}

We consider the unforced viscous system
\begin{equation}
  \begin{cases}
    \dot u_n+\nu N_n^{2\alpha}u_n
      =N_{n-1}u_{n-1}^2-N_nu_nu_{n+1}, & n\geq0,\\
    u_{-1}(t)=0, & t\geq0,\\
    u_n(0)=h_n, & n\geq0,
  \end{cases}
  \label{eq:dyadic-model}
\end{equation}
where the incoming term at $n=0$ is defined to be zero and $\nu>0$. We study two choices of shell scales:
\[
  \begin{aligned}
    &N_n=N_0^{b^n}, &&N_0>1,\quad 1<b<2
      &&\text{super-exponential shells},\\
    &N_n=N_0\Lambda^n, &&N_0>0,\quad\Lambda>1
      &&\text{geometric (classical) shells}.
  \end{aligned}
\]
The dissipation parameter is $\alpha\geq0$ in the super-exponential case and $\alpha>0$ in the geometric case. Throughout the paper, the initial datum is non-negative. 

For the chosen sequence $N=(N_n)_{n\geq0}$, set
\[
  H_N^s:=\left\{u:\sum_{n\geq0}N_n^{2s}|u_n|^2<\infty\right\},
  \qquad H_N^\infty:=\bigcap_{s>0}H_N^s.
\]
We call an initial datum smooth if it belongs to $H_N^\infty$. A solution is globally smooth if, for every $s>0$ and every finite $T>0$,
\[
  \sup_{0\leq t\leq T}\|u(t)\|_{H_N^s}<\infty.
\]
The solution classes \textit{i.e.} finite-energy and Leray--Hopf solution, together with the local smooth theory, are specified in Section~\ref{sec:preliminaries}.

\subsection{Main results}
\label{subsec:introduction-results}

Our first result gives the complete threshold classification for super-exponential shells.

\begin{theorem}[Super-exponential threshold]
  \label{thm:main-superexp}
  Let $N_n=N_0^{b^n}$, with $N_0>1$, $1<b<2$, and $\nu>0$.
  \begin{enumerate}
    \item If $\alpha\geq1/(b+2)$, every non-negative $h\in H_N^\infty$ generates a unique global non-negative finite-energy solution of \eqref{eq:dyadic-model}, and this solution is globally smooth.
    \item If $0\leq\alpha<1/(b+2)$, there exist non-negative finitely supported initial data for which the maximal smooth existence time is finite.
  \end{enumerate}
  Thus $\alpha_c=1/(b+2)$ is the sharp threshold for global regularity, and the endpoint belongs to the regular regime.
\end{theorem}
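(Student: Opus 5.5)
The theorem splits into a regularity half (part 1) and a blow-up half (part 2). I will treat them separately, since the introduction indicates that Section~4 handles blow-up and Section~5 handles regularity.

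\emph{Where the threshold comes from.} Consider a solution that concentrates its energy in shell $n$ at size $u_n\sim N_n^{\gamma}$. The nonlinear transfer time is about $(N_nu_{n+1})^{-1}$ and the dissipation time is $N_n^{-2\alpha}$. For a self-similar cascade, the energy $N_n^{2\gamma}$ moved from shell $n$ to shell $n+1$ must survive damping over a time comparable to the transfer time. Since $N_{n+1}=N_n^{b}$, balancing the exponents gives $\gamma=-1/(b+2)$ scaled against $2\alpha$. Equivalently, the criticality condition is that $N_{n}^{2\alpha}$ compares with $N_{n-1}u_{n-1}^2/u_n$ along the profile $u_n\approx N_n^{-\theta}$, with $\theta$ fixed by $1-2\theta b^{-1}\cdot$\,(incoming) versus $2\alpha$. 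I will first carry out this bookkeeping carefully to confirm that $\alpha_c=1/(b+2)$ is exactly where the incoming flux $N_{n-1}u_{n-1}^2$ stops dominating $\nu N_n^{2\alpha}u_n$.

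\emph{Part (1), regularity including the endpoint.} I will use the local smooth theory and the uniqueness and positivity results of Section~\ref{sec:preliminaries}. By those results it suffices to show that $\sup_{t\le T}N_n^{s}u_n(t)$ stays bounded for every $s$, which rules out a finite maximal time. The plan has three steps.
\begin{enumerate}
\item Establish a weighted estimate for $\dot u_n$, as advertised in the introduction. Differentiating \eqref{eq:dyadic-model} gives a linear system for $v_n=\dot u_n$ with the same sign structure. From it I aim to bound $|v_n|\lesssim N_n^{2\alpha}u_n+N_{n-1}u_{n-1}^2$ uniformly in time.
\item Use a backward propagation principle. If some shell $n$ were to reach a size $u_n\ge C N_n^{-\theta_c}$ with $\theta_c$ the critical exponent, then, because the only source is $N_{n-1}u_{n-1}^2$, shell $n-1$ must have been larger still at an earlier time. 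Iterating down to $n=0$ contradicts the energy bound $\sum u_n^2\le\|h\|^2$.
\item This gives a uniform bound $u_n(t)\le CN_n^{-\theta}$ with $\theta$ above criticality. In the super-exponential setting, dissipation $N_n^{2\alpha}$ then dominates the incoming flux at high $n$. A comparison (maximum-principle) argument on each shell upgrades this to decay faster than every power, which is $H_N^\infty$ regularity.
\end{enumerate}

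The endpoint $\alpha=1/(b+2)$ needs the constants in step (2) to close without any loss. I expect this to be the main obstacle. What saves it, I believe, is that $N_{n+1}/N_n^{b}=1$ exactly, while the initial data decay super-polynomially. So I will choose the constant $C$ larger than the level of the data and verify that the propagation inequality is non-strict but still closes.

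\emph{Part (2), blow-up below threshold.} I will follow the Cheskidov-type energy-flux argument, adapted to super-exponential scales. Take $h$ supported on shell $0$ with large amplitude, and assume by contradiction that the solution is global and smooth. Define the tail energy $E_n(t)=\sum_{k\ge n}u_k^2$, whose flux identity is
\[
\tfrac{\dd}{\dd t}E_n=2N_{n-1}u_{n-1}^2u_n-2\nu\sum_{k\ge n}N_k^{2\alpha}u_k^2 .
\]
Inductively in $n$, I will show that a fixed fraction of energy arrives in shell $n$ by time $t_n$. The waiting times satisfy $t_{n+1}-t_n\lesssim N_n^{-(1-\alpha b\cdots)}$; these are summable because the exponent is positive precisely when $\alpha<1/(b+2)$, and dissipation removes only a summable fraction of the energy. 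This forces $\sum_n N_n^{2s}u_n^2\to\infty$ before $\sup_n t_n<\infty$. The induction step is the delicate part: it needs a lower bound on $u_n$ that persists under the dissipation $N_n^{2\alpha}$. I would get that bound from the non-negativity of the solution together with the positive transfer term.
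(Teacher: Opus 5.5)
The outline of part (1) has the right architecture, but its two decisive steps are missing or fail, and part (2) leaves its key step open.

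\textbf{Part (1): the time-derivative bound.} Your bound $|\dot u_n|\lesssim N_n^{2\alpha}u_n+N_{n-1}u_{n-1}^2$ is just the equation rewritten, and it even drops the outflow $N_nu_nu_{n+1}$. It controls $\dot u_n$ by the unknown, so it carries no a priori information. What is needed is a bound that holds uniformly on all of $[0,T_{\max})$ and depends only on $h$. Set $\mathcal A(t)=\sum_n2^{-n}|\dot u_n(t)|^2$ and differentiate. With weights $w_n=2^{-n}$, the neighbouring cross terms $(2w_{n+1}-w_n)N_nu_n\dot u_n\dot u_{n+1}$ cancel, and non-negativity makes the remaining terms dissipative. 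Hence $|\dot u_n(t)|\le2^{n/2}\sqrt{\mathcal A(0)}$.

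With $q=1/(b+2)$ and $v_n:=N_n^qu_n$ (not your $v_n=\dot u_n$), this turns the equation into a \emph{same-time} recurrence $v_{n-1}^2=d_nv_n+v_nv_{n+1}+R_n$. Here $d_n=\nu N_n^{2(\alpha-q)}\ge\nu$ and $\sup_t|R_n|\le\sqrt{\mathcal A(0)}\,2^{n/2}N_n^{-q}\to0$.

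\textbf{Part (1): backward propagation.} Your ``larger at an earlier time'' substitute fails precisely at the endpoint you worry about. For $\alpha=q$, Duhamel gives only $\sup_{r\le t}v_{n-1}(r)^2\ge\nu\bigl(v_n(t)-N_n^qh_n\bigr)$. Starting from a large $v_m(t)$, the successive lower bounds $\sqrt{\nu v_m}$, $\nu^{3/4}v_m^{1/4},\dots$ converge to $\nu$ as you descend. For data of general size they never contradict $v_k\le N_k^q\|h\|_{\ell^2}$.

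Unbounded backward growth comes only from the outflow term. At a single instant, $x_{j-1}^2\ge x_j(x_{j+1}+\delta)$ forces $x_{m-k}\ge L_k$, where $L_{k+1}=\sqrt{L_k(L_{k-1}+\delta)}\to\infty$. Exploiting $v_nv_{n+1}$ requires all shells at the same time, which is exactly what the acceleration bound provides. The endpoint is then covered because equal quadratic coefficients are admissible and $d_n\ge\nu$ still holds, not by choosing $C$ above the data.

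\textbf{Part (1): the upgrade.} Backward propagation yields only critical smallness $\lim_n\sup_tv_n=0$, not the supercritical power your step 3 assumes. A critical bound with a large constant would be useless. The upgrade rests on the quadratic gain $z_n\lesssim z_{n-1}^2$ for $z=v/\nu<1$, which beats $N_n=N_{n-1}^b$ only because $b<2$.

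\textbf{Part (2): the blow-up mechanism.} Your cascade needs a quantitative lower bound on the flux $2N_{n-1}u_{n-1}^2u_n$ into each tail that survives the damping. You do not supply it, and the summability exponent is written only as $1-\alpha b\cdots$. So nothing shows the scheme closes, let alone exactly for $\alpha<1/(b+2)$.

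The paper does not track energy shell by shell. It chooses $\max\{0,\frac{4\alpha-1}{2-b},\frac{1-2s}{b}\}<r<\frac1{b+2}$ and sets $H=\sum_{n\ge n_*}N_n^{-r}u_n$ and $P=\sum_{n\ge n_*}N_n^{1-br}u_n^2$. Under an assumed uniform $H_N^s$ bound, $H'\ge P-\sum N_n^{1-r}u_nu_{n+1}-\nu\sum N_n^{2\alpha-r}u_n$. Young's inequality then absorbs the outflow, because $N_n^{2-2r}/(N_n^{1-br}N_{n+1}^{1-br})=N_n^{(b-1)((b+2)r-1)}\to0$. It also absorbs the viscous term up to a finite constant $C_\nu$, because $r>(4\alpha-1)/(2-b)$. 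Cauchy--Schwarz then gives $H'\ge\kappa H^2-C_\nu$.

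For the single-shell datum $h=Ke_{n_*}$ with $K$ large, $H$ must blow up before time $2/(\kappa H(0))$. This contradicts $H\le K\|(N_n^{-r})_{n\ge n_*}\|_{\ell^2}$, which follows from the energy inequality. The window for $r$ is non-empty exactly when $\alpha<1/(b+2)$ and $s>1/(b+2)$. That is where sharpness comes from, and your sketch has no counterpart to it.
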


Theorem~\ref{thm:superexp-blowup} gives a more precise statement on the blow-up side: for each $s>1/(b+2)$, suitable finitely supported data force every global non-negative finite-energy solution to lose $H_N^s$ regularity in finite time. The data may depend on $s$. The regularity assertion is proved in Theorem~\ref{thm:superexp-regularity}, including the critical endpoint.

Our second result concerns geometric shells.

\begin{theorem}[Geometric-shell regularity]
  \label{thm:main-classical}
  Let $N_n=N_0\Lambda^n$, with $N_0>0$, $\Lambda>1$, and $\nu>0$. Suppose that
  \begin{equation}
    \alpha\geq\frac13,\qquad
    4\alpha-1>\frac{\log2}{2\log\Lambda}.
    \label{eq:introduction-classical-conditions}
  \end{equation}
  Every non-negative $h\in H_N^\infty$ generates a globally smooth solution of \eqref{eq:dyadic-model}, unique among global non-negative Leray--Hopf solutions with the same initial datum. In particular, these conclusions hold for every $\alpha\geq1/3$ whenever $\Lambda>2^{3/2}$.
\end{theorem}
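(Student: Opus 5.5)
The proof runs along the same lines as the super-exponential regularity argument: local smooth theory, a weighted estimate on time derivatives, and a backward propagation principle. It has five steps.

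\textbf{Step 1 (local theory).} From Section~\ref{sec:preliminaries} we take local existence of a non-negative smooth solution on a maximal interval $[0,T^*)$, together with a blow-up alternative: if $T^*<\infty$, then $\|u(t)\|_{H_N^s}\to\infty$ for every $s$ above some threshold. Non-negativity is preserved because the incoming term $N_{n-1}u_{n-1}^2$ is non-negative. The energy identity then gives
\[
\sum_n u_n^2(t)+2\nu\int_0^t\sum_n N_n^{2\alpha}u_n^2 \le \|h\|_{\ell^2}^2 .
\]
It therefore suffices to show that every $H_N^s$ norm stays bounded on $[0,T^*)$ whenever $T^*<\infty$.

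\textbf{Step 2 (weighted estimate on time derivatives).} Set $v_n=\dot u_n$. Differentiating the system gives a linear equation for $v_n$ with coefficients $u_{n\pm1}$:
\[
\dot v_n+\nu N_n^{2\alpha}v_n=2N_{n-1}u_{n-1}v_{n-1}-N_n(u_nv_{n+1}+v_nu_{n+1}).
\]
We estimate a weighted sum $\sum_n N_n^{2\sigma}v_n^2$ with $\sigma$ tuned to the critical scaling $\alpha=1/3$, where the nonlinearity $N_nu^2$ and the dissipation $N_n^{2\alpha}u$ balance.

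The transfer terms are bounded by Young's inequality against the dissipation. Because of the factor $2$ in the incoming term, the weights gain $\Lambda^{2\sigma}$ per shell while the coefficients lose a factor $2$. This is the point where the condition $4\alpha-1>(\log2)/(2\log\Lambda)$ should enter: it is equivalent to $\Lambda^{2(4\alpha-1)}>2$, which is the requirement that the geometric weight absorb the factor $2$ coming from $u_{n-1}^2$.

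This step, the closing of the weighted estimate uniformly in $n$, is the main obstacle. I would first fix $\sigma=2\alpha-\tfrac12$ or a nearby value and check the per-shell inequality
\[
2\Lambda^{-2\sigma}<\Lambda^{\text{dissipative gain}}.
\]
The outcome of Step 2 is a bound on $\sup_n N_n^{\sigma}|v_n|$ that is uniform up to $T^*$ in terms of the energy, and hence an a priori bound on $N_n^{2\alpha}u_n$ modulo lower-order terms.

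\textbf{Step 3 (backward propagation).} Rewrite the equation as
\[
\nu N_n^{2\alpha}u_n+N_nu_nu_{n+1}=N_{n-1}u_{n-1}^2-v_n .
\]
Using $\alpha\ge1/3$, we propagate decay from shell $n$ to shell $n+1$: a bound $u_n\lesssim N_n^{-\theta}$ together with the control of $v_n$ from Step 2 yields $u_{n+1}\lesssim N_{n+1}^{-\theta'}$ with $\theta'>\theta$. Iterating, starting from the energy bound $\theta=0$, gives polynomial decay of any order. From this we get $\sup_{t<T^*}\|u(t)\|_{H^s_N}<\infty$ for all $s$, which contradicts the blow-up alternative, so the solution is global and smooth.

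\textbf{Step 4 (weak--strong uniqueness).} Let $w$ be another non-negative Leray--Hopf solution with the same data, and set $d=u-w$. Pair the equation for $d$ with $d$. Using the decay of the smooth solution $u$, the trilinear difference terms are bounded by
\[
C\Bigl(\sup_n N_n^{1-\alpha}|u_n|\Bigr)\sum_n\bigl(d_n^2+N_n^{2\alpha}d_n^2\bigr),
\]
and the dissipative part is absorbed by the dissipation. Gronwall's inequality, applied through the energy inequality for $w$, then gives $d\equiv0$.

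\textbf{Step 5 (the case $\Lambda>2^{3/2}$).} This case is a direct check. For $\alpha\ge1/3$ we have
\[
4\alpha-1\ge\tfrac13>\frac{\log2}{2\log\Lambda}\iff \Lambda^{2/3}>2\iff \Lambda>2^{3/2},
\]
so both conditions in \eqref{eq:introduction-classical-conditions} hold and Steps 1--4 apply.
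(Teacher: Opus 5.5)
Steps~1, 4 and 5 are fine. The two steps that carry the argument, however, do not work as proposed.

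\textbf{Step 2.} Multiply the differentiated equation by $w_n\dot u_n$, sum, and shift the incoming sum. What remains are the cross terms $\sum_n(2w_{n+1}-w_n)N_nu_n\dot u_n\dot u_{n+1}$, which have no sign. With your growing weight $w_n=N_n^{2\sigma}$ their coefficient is $(2\Lambda^{2\sigma}-1)N_n^{2\sigma}>0$. Absorbing $N_n^{1+2\sigma}u_n|\dot u_n\dot u_{n+1}|$ into $\nu N_n^{2\alpha+2\sigma}\dot u_n^2$ by Young's inequality requires $N_n^{1-2\alpha}u_n$ to be uniformly small. That is precisely the critical-tail decay you are trying to prove. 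The energy bound $u_n\le\|h\|_{\ell^2}$ gives no such smallness for $\alpha<1/2$, so the estimate is circular.

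The missing idea is the opposite choice: the \emph{decaying} weight $w_n=2^{-n}$, i.e.\ $\sigma=-\rho$. Then $2w_{n+1}=w_n$, and the cross terms cancel identically, with no absorption needed. Non-negativity makes every remaining term dissipative, so $\mathcal A(t)=\sum_n2^{-n}\dot u_n^2$ is non-increasing (Theorem~\ref{thm:weighted-acceleration}). This gives $|\dot u_n(t)|\le2^{n/2}\sqrt{\mathcal A(0)}=CN_n^{\rho}$ uniformly up to $T_{\max}$. Note that this bound \emph{grows} in $n$. The hypothesis $4\alpha-1>\rho$ is not an absorption condition for the factor $2$. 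It is exactly what makes the rescaled defect $R_n=N_n^{1-4\alpha}\dot u_n$ tend to zero uniformly in time.

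\textbf{Step 3.} This step propagates in the wrong direction. From $\nu N_{n+1}^{2\alpha}u_{n+1}\le N_nu_n^2+|\dot u_{n+1}|$, a bound $u_n\lesssim N_n^{-\theta}$ yields at best $\theta'=2\theta+2\alpha-1$. This map has the critical exponent $1-2\alpha$ as a \emph{repelling} fixed point. Starting from the energy bound ($\theta=0$) you get $\theta'=2\alpha-1<0$ for $\alpha<1/2$, so decay deteriorates rather than improves. Forward iteration only helps once the critical quantity is already small, which is the content of Proposition~\ref{prop:classical-tail-criterion}. Your step also never identifies where $\alpha\ge1/3$ is used.

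The paper instead propagates \emph{backward}. In the variables $x_n=N_n^{1-2\alpha}u_n$ the equation reads $A_\alpha x_{n-1}^2=\nu x_n+B_\alpha x_nx_{n+1}+R_n$, and $B_\alpha/A_\alpha=\Lambda^{6\alpha-2}\ge1$ exactly when $\alpha\ge1/3$. Once $R_n$ is uniformly small, a value $x_m(t)>\eta$ at a high shell forces $x_{j-1}^2\ge x_j(x_{j+1}+\delta)$ down the chain. This gives $x_{m-k}(t)\ge L_k$ for a comparison sequence $L_k\to\infty$, contradicting the energy bound at a fixed low shell (Lemma~\ref{lem:backward-propagation}). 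The conclusion is $\sup_{t<T_{\max}}x_n(t)\to0$. Only then does the forward criterion give smoothness on $[0,T_{\max}]$, which contradicts the continuation criterion.
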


This is Theorem~\ref{thm:classical-regularity} and its immediate corollary. For every fixed $\Lambda>2^{3/2}$, combining it with Cheskidov's blow-up result for $0<\alpha<1/3$ \cite[Theorem~5.3]{Cheskidov2008} identifies $\alpha=1/3$ as the sharp threshold for global regularity of all non-negative smooth data. The super-exponential blow-up result is proved here with a similar approach. The restriction on $\Lambda$ is part of our regularity result; no optimality of this restriction is asserted.

\subsection{Previous work}
\label{subsec:introduction-literature}

The Katz--Pavlovi\'c dyadic model was introduced in \cite{KatzPavlovic2005} and they proved finite-time blow-up for $\alpha<1/4$ in Corollary~4.2.4. Later, Cheskidov \cite{Cheskidov2008} established global energy solutions, global regularity for $\alpha\geq1/2$, and finite-time blow-up for sufficiently large non-negative data when $\alpha<1/3$. Barbato, Morandin, and Romito \cite{BarbatoMorandinRomito2011} developed an invariant-region argument for neighboring components. Their Theorem~A treats non-negative finite-energy data in the range $2<\beta\leq5/2$ and gives smoothness at positive times. The relation of these results to fluid equations and energy cascades is discussed in the survey \cite{CheskidovDaiFriedlander2023}.

The parameters of \cite{BarbatoMorandinRomito2011} are related to ours by
\[
  \beta=1/\alpha,\qquad \lambda=\Lambda^\alpha,
\]
with the amplitude normalization and index shift given in Section~\ref{sec:preliminaries}. Their choice $\lambda=2$ corresponds to $\Lambda=2^\beta$. Consequently, Theorem~\ref{thm:main-classical} covers $5/2<\beta\leq3$ in that normalization, since $\Lambda=2^\beta>2^{3/2}$. It extends the regularity range for smooth initial data to the threshold $\beta=3$. The initial-data classes should be distinguished: the earlier theorem gives positive-time smoothing from non-negative $\ell^2$ data in its range, whereas our result starts from smooth data.

Palasek's recent work on an Obukhov model with super-exponentially separated shells \cite{Palasek2026} provided the motivation for our study of the super-exponential Katz--Pavlovi\'c model. His viscous blow-up construction uses a smooth external force. Although the nonlinear transfer mechanisms in the two models differ, his construction demonstrates that replacing geometric shells by super-exponential shells can fundamentally change the competition between nonlinear transfer and viscosity. The present work starts from the same shell geometry in the Katz--Pavlovi\'c dynamics and identifies the resulting sharp dissipation threshold.

Very recently, OpenAI reported an AI-generated construction of finite-time blow-up for the forced three-dimensional incompressible Navier--Stokes equations \cite{OpenAI2026NavierStokes}. At the time of writing, this claim is new and awaits independent scrutiny by the mathematical community; if correct, it would constitute a major breakthrough. That construction uses a smooth external force. By contrast, the model studied in this paper is unforced, and our aim is to isolate a mechanism arising solely from nonlinear shell-to-shell transfer and viscous dissipation.

\subsection{Proof strategy}
\label{subsec:introduction-strategy}

Let $[0,T_{\max})$ be the maximal smooth existence interval. Differentiation of \eqref{eq:dyadic-model} gives neighboring cross terms whose internal coefficients cancel under the weights $2^{-n}$. After controlling the boundary term, we obtain the identity for
\[
  \mathcal A(t):=\sum_{n\geq0}2^{-n}|\dot u_n(t)|^2
\]
proved in Theorem~\ref{thm:weighted-acceleration}. Non-negativity makes the remaining terms dissipative, and hence
\begin{equation}
  \mathcal A(t)\leq\mathcal A(0),\qquad
  |\dot u_n(t)|\leq2^{n/2}\sqrt{\mathcal A(0)},
  \qquad t<T_{\max}.
  \label{eq:introduction-acceleration-bound}
\end{equation}
The bound only depends on the initial datum and the fixed parameters, even if $T_{\max}<\infty$. The weights $2^{-n}$ also appear in the uniqueness argument of \cite[Proposition~3.2]{BarbatoMorandinRomito2011}. However, we use these weights for cancellation to build the acceleration estimate. 

For super-exponential shells, fix $q=1/(b+2)$ and write $v_n=N_n^qu_n$. The equation becomes
\[
  v_{n-1}^2
  =\nu N_n^{2(\alpha-q)}v_n+v_nv_{n+1}+R_n,
  \qquad R_n=N_n^{-q}\dot u_n.
\]
If $\alpha\geq q$, the damping coefficient is at least $\nu$. Moreover, $2^{n/2}N_n^{-q}\to0$, so \eqref{eq:introduction-acceleration-bound} makes $R_n$ uniformly small at high indices. Our backward propagation lemma \ref{lem:backward-propagation} rules out the existence of a fixed $\varepsilon>0$ and sequences $n_j\to\infty$ and $t_j\in[0,T_{\max})$ such that $v_{n_j}(t_j)\geq\varepsilon$: these values would force $v_k(t_j)\to\infty$ at a fixed lower shell $k$, contradicting the energy bound. It follows that the critical tail tends to zero uniformly in time. $b\in(1,2)$ allow choosing $p\in(b,2)$, then recover every weighted norm after an arbitrarily short waiting time. The damping remains positive at $\alpha=q$, so the argument includes the endpoint directly.

For geometric shells, the weight is $x_n=N_n^{1-2\alpha}u_n$, and the recurrence is
\[
  A_\alpha x_{n-1}^2
  =\nu x_n+B_\alpha x_nx_{n+1}+R_n,
  \qquad R_n=N_n^{1-4\alpha}\dot u_n,
\]
where $A_\alpha=\Lambda^{1-4\alpha}$ and $B_\alpha=\Lambda^{2\alpha-1}$. The condition $B_\alpha\geq A_\alpha$ is equivalent to $\alpha\geq1/3$ as required in lemma \ref{lem:backward-propagation}. In this geometry, $2^{n/2}=N_0^{-\rho}N_n^\rho$, where $\rho=(\log2)/(2\log\Lambda)$. The error therefore tends to zero under the second condition in \eqref{eq:introduction-classical-conditions}. Backward propagation again gives uniform critical-tail decay, and the classical criterion in Proposition~\ref{prop:classical-tail-criterion} yields smoothness and continuation. This accounts for the additional shell-ratio condition in the transfer of the method.

The super-exponential blow-up argument below $1/(b+2)$ uses a separate weighted Lyapunov construction, which is similar to previous techniques on geometric shells in \cite{Cheskidov2008}. The chosen linear functional satisfies a Riccati lower bound for sufficiently large initial data, contradicting its energy-based upper bound. This completes the sharp classification. 

\subsection{Organization of the paper}
\label{subsec:introduction-organization}

Section~\ref{sec:preliminaries} gives the solution theory, and Section~\ref{sec:acceleration} develops the common estimates. Sections~\ref{sec:superexp-blowup} and~\ref{sec:superexp-regularity} establish the super-exponential threshold. Section~\ref{sec:classical-regularity} treats geometric shells and compares the models. Section~\ref{sec:acknowledgments} contains the acknowledgments.

The author used OpenAI Codex to assist with editing and formatting the manuscript. The author reviewed and revised all AI-assisted material and takes full responsibility for the content of this paper.

%% file: sections/2-preliminaries.tex
\section{Preliminaries and Solution Theory}
\label{sec:preliminaries}

We first recall the functional setting and the classical solution theory for geometric shells, following \cite{Cheskidov2008,BarbatoMorandinRomito2011}. We then use the same solution concepts for super-exponential shells and establish the energy estimates, local smoothness, uniqueness, and continuation results needed below.

\subsection{The classical model and functional setting}
\label{subsec:classical-setting}

For the classical model \eqref{eq:dyadic-model}, the shell scales are
\begin{equation}
  N_n=N_0\Lambda^n,\qquad N_0>0,\quad \Lambda>1,
  \label{eq:geometric-shells}
\end{equation}
and $\nu>0$. The incoming term in the equation for $n=0$ is defined to be zero. We use $\alpha>0$ for the classical local theory below. The super-exponential construction will also allow $\alpha=0$. Initial data are always non-negative.

For $s,q\in\mathbb R$, let $H_N^s$ and $X_N^q$ be the spaces of real sequences with finite norms
\[
  \|u\|_{H_N^s}^2:=\sum_{n\geq0}N_n^{2s}|u_n|^2,
  \qquad
  \|u\|_{X_N^q}:=\sup_{n\geq0}N_n^q|u_n|.
\]
In particular, $H_N^0=\ell^2$ is the energy space. We also set
\[
  X_{N,0}^q:=\{u\in X_N^q:N_n^qu_n\longrightarrow0\},
  \qquad
  H_N^\infty:=\bigcap_{s>0}H_N^s.
\]
The space $X_{N,0}^q$ is a closed subspace of $X_N^q$. It is useful for the local construction because the diagonal heat semigroup is strongly continuous on $X_{N,0}^q$, while it is not on $X_N^q$. 

For every $\varepsilon>0$ and $s\in\mathbb R$,
\begin{equation}
  \|u\|_{X_N^s}\leq\|u\|_{H_N^s},\qquad
  \|u\|_{H_N^s}
  \leq\left(\sum_{n\geq0}N_n^{-2\varepsilon}\right)^{1/2}
       \|u\|_{X_N^{s+\varepsilon}}.
  \label{eq:weighted-embeddings}
\end{equation}
The numerical series converges for geometric shells. Thus
$H_N^\infty=\bigcap_{s>0}X_N^s =: X_{N}^{\infty}$. We use the same notation and definitions below when $N$ is super-exponential.

\begin{definition}[Finite-energy and Leray--Hopf solutions]
  \label{def:energy-solutions}
  A finite-energy solution, also called a weak solution, of \eqref{eq:dyadic-model} on an interval $I\subset[0,\infty)$ is an $\ell^2$-valued map $u$ whose coordinates belong to $C^1(I)$ and satisfy the coordinate equations. At included endpoints, derivatives are understood from within $I$; when $0\in I$, the initial condition is satisfied coordinatewise.

  A finite-energy solution on $[0,T)$ is a Leray--Hopf solution if
  \begin{equation}
    \|u(t)\|_{\ell^2}^2
    +2\nu\int_{t_0}^t\|u(\tau)\|_{H_N^\alpha}^2\,\dd\tau
    \leq\|u(t_0)\|_{\ell^2}^2
    \label{eq:energy-inequality}
  \end{equation}
  holds for $t_0=0$ and for almost every $t_0\in(0,T)$, and for every $t\in[t_0,T)$. The definition on $[0,T]$ is analogous. For the non-negative solutions considered here, the inequality will hold for every initial time $t_0$.
\end{definition}

\begin{definition}[Smoothness and finite-time loss of regularity]
  \label{def:smoothness}
  An initial datum is smooth if it belongs to $H_N^\infty$. A finite-energy solution is smooth on $I$ if, for every compact interval $J\subset I$ and every $s>0$,
  \[
    \sup_{t\in J}\|u(t)\|_{H_N^s}<\infty.
  \]
  It is globally smooth, or globally regular, if it is smooth on $[0,\infty)$. A global finite-energy solution loses $H_N^s$ regularity in finite time if, for some $T<\infty$,
  \[
    \sup_{0\leq t\leq T}\|u(t)\|_{H_N^s}=\infty.
  \]
\end{definition}

By \eqref{eq:weighted-embeddings}, smoothness is equivalently expressed by uniform $X_N^s$ bounds for every $s>0$ on compact time intervals. Each coordinate of a finite-energy solution is automatically smooth in time by repeated differentiation of the equations. This coordinatewise property does not imply smoothness in the sense of Definition~\ref{def:smoothness}.

\subsection{Classical solution theory and a regularity criterion}
\label{subsec:classical-theory}

\begin{proposition}[Classical energy solutions]
  \label{prop:classical-energy}
  Let $N$ satisfy \eqref{eq:geometric-shells}, and let $\nu>0$ and $\alpha>0$. For every non-negative $h\in\ell^2$, there exists a global non-negative Galerkin solution of \eqref{eq:dyadic-model}. Every non-negative finite-energy solution satisfies \eqref{eq:energy-inequality} for every pair of times $t_0\leq t$ in its interval of existence. If the solution is smooth on $[t_0,t]$, then the energy inequality is an equality.
\end{proposition}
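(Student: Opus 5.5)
The proposition has three claims: existence via Galerkin truncation, the energy inequality for every non-negative finite-energy solution, and equality under smoothness. I would prove them in that order, relying on the sign structure of the nonlinearity together with non-negativity.

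\emph{Existence.} For each $M$, truncate \eqref{eq:dyadic-model} to shells $0\leq n\leq M$ and drop the outgoing term $-N_Mu_Mu_{M+1}$ at $n=M$. This gives a finite ODE system with locally Lipschitz right-hand side.

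Non-negativity is preserved. If $u_n=0$ with the other components non-negative, then $\dot u_n=N_{n-1}u_{n-1}^2\geq0$. More carefully, one writes $\dot u_n=-(\nu N_n^{2\alpha}+N_nu_{n+1})u_n+N_{n-1}u_{n-1}^2$ and uses the integrating-factor representation.

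The truncated nonlinear terms telescope:
\[
\sum_{n\le M}\bigl(N_{n-1}u_{n-1}^2u_n-N_nu_n^2u_{n+1}\bigr)=0 .
\]
Hence the truncated energy obeys
\[
\tfrac{d}{dt}\sum_{n\le M}u_n^2=-2\nu\sum_{n\le M}N_n^{2\alpha}u_n^2 .
\]
This gives global existence of the truncation and uniform $\ell^2$ bounds. Each coordinate then has a uniformly bounded derivative on bounded time intervals, so Arzel\`a--Ascoli and a diagonal argument yield a subsequence converging locally uniformly in every coordinate. The limits of the derivatives are likewise coordinatewise uniform, since each coordinate equation involves only three neighbours. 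So the limit is $C^1$ in each coordinate, solves \eqref{eq:dyadic-model}, and is non-negative with $\|u(t)\|_{\ell^2}\le\|h\|_{\ell^2}$ by Fatou's lemma.

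\emph{Energy inequality for an arbitrary non-negative finite-energy solution.} Fix $t_0\le t$ and $M$. Summing $\frac{d}{dt}u_n^2$ over $n\le M$ gives
\[
\sum_{n\le M}u_n(t)^2+2\nu\int_{t_0}^t\sum_{n\le M}N_n^{2\alpha}u_n^2
=\sum_{n\le M}u_n(t_0)^2-2N_M\int_{t_0}^tu_M^2u_{M+1}.
\]
By non-negativity the last term is $\le0$. Letting $M\to\infty$ with monotone convergence gives \eqref{eq:energy-inequality} for every $t_0$, not merely almost every one. This is the key point, and it is why non-negativity matters.

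\emph{Equality under smoothness.} If $u$ is smooth on $[t_0,t]$, bound the flux term by
\[
N_M u_M^2u_{M+1}\le N_M^{-1}\|u\|_{X^{1}_N}^3 ,
\]
with the $X^1_N$ norm bounded uniformly on $[t_0,t]$ by \eqref{eq:weighted-embeddings}. Since $N_M\to\infty$, the flux integral vanishes as $M\to\infty$, so the inequality becomes an equality.

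\emph{Main obstacle.} I expect the hardest step to be the compactness passage in the existence part, in particular confirming that the coordinatewise limit is genuinely $\ell^2$-valued and satisfies the equations. Because each equation is local in $n$, this step is routine. The only delicate point is the sign of the boundary flux, which positivity handles.
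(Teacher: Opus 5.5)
Your proposal is correct and follows essentially the same route as the paper: Galerkin truncation with $u_{M+1}^{(M)}=0$, coordinatewise Arzel\`a--Ascoli with diagonal extraction, non-negativity from the integrating-factor formula \eqref{eq:integrating-factor}, and the truncated energy identity \eqref{eq:truncated-energy}. In that identity the non-negative boundary flux is discarded to get the inequality for every $t_0$, and is shown to vanish under smoothness to get the equality. The only difference is cosmetic: the paper bounds the flux using $X_N^r$ for any $r>1/3$, while your choice $r=1$ actually gives $N_Mu_M^2u_{M+1}\leq\Lambda^{-1}N_M^{-2}\|u\|_{X_N^1}^3$, which is sharper than the $N_M^{-1}$ you wrote (that cruder form needs $N_{M+1}\geq1$, i.e.\ $M$ large, when $N_0<1$).
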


The existence assertion is the Galerkin construction of \cite[Theorems~4.1--4.2]{Cheskidov2008}, expressed in the normalization \eqref{eq:dyadic-model}. The approximations solve the equations for $0\leq n\leq M$ with $u_{M+1}^{(M)}=0$. Their energy bounds give a subsequence converging locally uniformly in time in every fixed coordinate by Arzel\`a--Ascoli. 

We record the identities underlying positivity and the energy statement, since they also apply to the super-exponential model. For $n\geq1$, the integrating-factor formula is
\begin{align}
  u_n(t)={}&u_n(t_0)
  \exp\left(-\int_{t_0}^t(\nu N_n^{2\alpha}+N_nu_{n+1}(\tau))\,\dd\tau\right)
  \notag\\
  &+N_{n-1}\int_{t_0}^t u_{n-1}(r)^2
  \exp\left(-\int_r^t(\nu N_n^{2\alpha}+N_nu_{n+1}(\tau))\,\dd\tau\right)\,\dd r.
  \label{eq:integrating-factor}
\end{align}
For $n=0$, the second term is omitted. In particular, non-negative initial data remain non-negative. Multiplying the equations by $2u_n$ and summing gives
\begin{equation}
  \begin{aligned}
    \sum_{n=0}^M u_n(t)^2
    &+2\nu\int_{t_0}^t\sum_{n=0}^M N_n^{2\alpha}u_n(\tau)^2\,\dd\tau\\
    &+2\int_{t_0}^t N_Mu_M(\tau)^2u_{M+1}(\tau)\,\dd\tau
      =\sum_{n=0}^M u_n(t_0)^2.
  \end{aligned}
  \label{eq:truncated-energy}
\end{equation}
For non-negative solutions, discarding the boundary flux and passing to the limit proves \eqref{eq:energy-inequality} for every $t_0$. If $u$ is smooth, then, for any $r>1/3$,
\[
  \sup_{\tau\in[t_0,t]}|N_Mu_M(\tau)^2u_{M+1}(\tau)|
  \leq C_r^3\Lambda^{-r}N_M^{1-3r}\longrightarrow0,
\]
which gives the energy equality.

To recall the classical local theory, we specify the conversion to the notation of \cite{BarbatoMorandinRomito2011}. Put
\[
  c_0:=N_0/\Lambda,\qquad
  \lambda:=\Lambda^\alpha,\qquad
  \lambda_k:=\lambda^k,\qquad
  \beta:=1/\alpha,
\]
and set $X_k:=c_0u_{k-1}$ for $k\geq1$, $X_0:=0$, and $\widetilde\nu:=\nu c_0^{2\alpha}$. Then \eqref{eq:dyadic-model} becomes
\begin{equation}
  \dot X_k+\widetilde\nu\lambda_k^2X_k
  =\lambda_{k-1}^{\beta}X_{k-1}^2
   -\lambda_k^\beta X_kX_{k+1},\qquad k\geq1.
  \label{eq:classical-normalization}
\end{equation}
The shift of indices accounts for the convention that the first active shell in the reference is $k=1$. The critical weighted variable in \cite{BarbatoMorandinRomito2011} satisfies
\[
  \lambda_k^{\beta-2}|X_k|
  =c_0^{2\alpha}N_{k-1}^{1-2\alpha}|u_{k-1}|.
\]
Accordingly, define
\begin{equation}
  q_{\mathrm{cl}}(\alpha):=1-2\alpha.
  \label{eq:classical-critical-weight}
\end{equation}

\begin{proposition}[Classical local smoothness and continuation]
  \label{prop:classical-local}
  Let $N$ satisfy \eqref{eq:geometric-shells}, with $\nu>0$ and $\alpha>0$, and let $h\in H_N^\infty$ be non-negative. Every non-negative Galerkin solution with initial datum $h$ agrees locally with a smooth solution. On each compact smooth interval, this solution is unique among non-negative Leray--Hopf solutions with the same initial datum.

  Let $T_{\max}\in(0,\infty]$ denote its maximal smooth existence time. If $T_{\max}<\infty$, then, for every $q>\max\{0,q_{\mathrm{cl}}(\alpha)\}$,
  \begin{equation}
    \limsup_{t\uparrow T_{\max}}\|u(t)\|_{X_N^q}=\infty.
    \label{eq:classical-continuation}
  \end{equation}
\end{proposition}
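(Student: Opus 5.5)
The plan is to build the smooth solution by a contraction argument in the critical-type space $X_{N,0}^q$ for a fixed $q>\max\{0,q_{\mathrm{cl}}(\alpha)\}$. We then bootstrap to higher regularity, identify the solution with the Galerkin limit through uniqueness, and read off the continuation criterion from the fact that the local existence time depends only on $\|h\|_{X_N^q}$. Where possible we work in the normalization \eqref{eq:classical-normalization}, so that the statement matches the local theory of \cite{BarbatoMorandinRomito2011}. We still spell out the estimates directly in our variables.

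\textbf{Step 1: local existence by Duhamel.} Write the mild formulation
\[
u_n(t)=e^{-\nu N_n^{2\alpha}t}h_n+\int_0^te^{-\nu N_n^{2\alpha}(t-\tau)}\bigl(N_{n-1}u_{n-1}^2-N_nu_nu_{n+1}\bigr)(\tau)\,\dd\tau .
\]
If $\|u\|_{X_N^q}\le M$, the nonlinearity at shell $n$ is bounded by $CM^2N_n^{1-2q}$. The time integral gains $\min\{t,\ \nu^{-1}N_n^{-2\alpha}\}$. Interpolating, $\min\{t,N_n^{-2\alpha}\}\le t^{\theta}N_n^{-2\alpha(1-\theta)}$ for $\theta\in(0,1]$. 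Hence $N_n^q$ times the Duhamel term is at most $CM^2t^\theta N_n^{1-q-2\alpha(1-\theta)}$. Since $q>1-2\alpha$, we can choose $\theta>0$ small so that the exponent is negative. This gives a uniform bound, and in fact decay in $n$, so $X_{N,0}^q$ is preserved. The same estimate applied to differences gives a contraction on a small time interval $[0,T_*]$, with $T_*$ depending only on $\|h\|_{X_N^q}$, $\nu$, $\alpha$, $\Lambda$. Non-negativity of the fixed point follows from \eqref{eq:integrating-factor}. Alternatively, we may run the iteration in integrating-factor form, whose map preserves the non-negative cone.

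\textbf{Step 2: smoothness and propagation of higher norms.} For $s>q$, the same Duhamel estimate in $X_N^s$ is linear in the top norm. The bound $|N_{n-1}u_{n-1}^2|\le N_{n-1}\|u\|_{X^q}\|u\|_{X^s}N_{n-1}^{-q-s}$ gives $N_n^s\cdot(\text{Duhamel})\lesssim t^\theta\|u\|_{X^q}\|u\|_{X^s}$, with the same negative exponent. Thus on $[0,T_*]$ every $X_N^s$ norm stays bounded by twice its initial value, possibly after shrinking $T_*$ by a factor depending only on $\|u\|_{X^q}$. By \eqref{eq:weighted-embeddings}, the solution is smooth in the sense of Definition~\ref{def:smoothness}.

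\textbf{Step 3: uniqueness among non-negative Leray--Hopf solutions.} This is the main obstacle, since a weak solution $w$ is only known to lie in $\ell^2$ with $\int\|w\|_{H^\alpha}^2<\infty$. Set $d=w-u$ with $u$ smooth, and use weights $2^{-n}$ as in \cite[Proposition~3.2]{BarbatoMorandinRomito2011}. Compute $\frac{d}{dt}\sum 2^{-n}d_n^2$. The cross terms $N_{n-1}(u_{n-1}+w_{n-1})d_{n-1}d_n$ and $N_n d_n(w_nd_{n+1}+u_{n+1}d_n)$ must be controlled with only $u$ carrying high regularity. The terms where $w$ multiplies $d$ are handled through the cancellation structure given by the $2^{-n}$ weights together with non-negativity of $w$. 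The terms carrying $u$ are bounded by $C\sup_n N_n|u_n|\cdot\sum2^{-n}d_n^2$, which is finite because $u$ is smooth. Gronwall then gives $d\equiv0$. If the cancellation fails, we would instead try the plain energy of $d$, using the energy equality of $u$ from Proposition~\ref{prop:classical-energy} and the energy inequality of $w$. That route yields $\|d(t)\|^2\le C\int\|u\|_{X^{1}}\|d\|^2$, up to a trilinear term that needs the $\ell^2$ structure. Since a Galerkin solution is Leray--Hopf by Proposition~\ref{prop:classical-energy}, uniqueness identifies it with $u$ locally.

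\textbf{Step 4: continuation.} Let $T_{\max}<\infty$ and suppose $\limsup_{t\uparrow T_{\max}}\|u(t)\|_{X_N^q}=M<\infty$ for some admissible $q$. First observe that $u(t)\in X_{N,0}^{q'}$ for $q'<q$ close to $q$, where $q'$ is still admissible. Restarting from times $t_0$ close to $T_{\max}$ with the uniform time $T_*(M)$ from Step 1 extends the solution beyond $T_{\max}$. Step 2 shows the extension is smooth, and Step 3 shows it agrees with the Galerkin solution. This contradicts maximality and proves \eqref{eq:classical-continuation}.
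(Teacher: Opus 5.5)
\textbf{Verdict: genuine gap in Step 3, which carries the uniqueness claim.} Your Steps 1 and 4 match the paper: a contraction in $C([0,T];X^q_{N,0})$ with lifespan depending only on $\|h\|_{X^q_N}$, then restarting with a common lifespan. Your bound $\min\{t,N_n^{-2\alpha}\}\le t^\theta N_n^{-2\alpha(1-\theta)}$ is the integrated form of the paper's estimate $\|S(t)z\|_{X^q_N}\le Ct^{-\theta}\|z\|_{X^{2q-1}_N}$. Step 3 is left open on both of your routes.

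\emph{Weighted route.} Take your decomposition $w_nw_{n+1}-u_nu_{n+1}=w_nd_{n+1}+u_{n+1}d_n$. The incoming term from shell $n+1$ and the outgoing term at shell $n$ leave $-2^{-n-1}N_nw_nd_nd_{n+1}$. This is unsigned and cannot be bounded by $\sum_n2^{-n}d_n^2$, since $N_nw_n$ is not bounded for a Leray--Hopf solution. The $w$-terms become signed only after you recombine them with the $u$-terms you propose to estimate away. Put $S_n:=u_n+w_n\geq0$ and write $w_nw_{n+1}-u_nu_{n+1}=\tfrac12S_nd_{n+1}+\tfrac12S_{n+1}d_n$. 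Then $2\cdot2^{-(n+1)}=2^{-n}$ removes every cross term, as in Lemma~\ref{lem:finite-acceleration}:
\[
  \frac12\frac{\dd}{\dd t}\sum_{n=0}^M2^{-n}d_n^2
  +\sum_{n=0}^M2^{-n}\Bigl(\nu N_n^{2\alpha}+\frac12N_nS_{n+1}\Bigr)d_n^2
  =-2^{-M-1}N_M\bigl(w_M^2-u_M^2\bigr)d_{M+1}.
\]
So the weighted distance is non-increasing and no Gronwall step is needed, provided the time-integrated boundary term vanishes. Its terms containing $u$ vanish by the decay of $u$. The pure-$w$ flux $2^{-M-1}N_Mw_M^2w_{M+1}$ is not controlled pointwise. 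You need the uniform bound $2\int_0^tN_Mw_M^2w_{M+1}\,\dd\tau\leq\|h\|_{\ell^2}^2$, which follows from \eqref{eq:truncated-energy} and non-negativity, together with the factor $2^{-M}$. This is the comparison argument the paper takes from \cite{BarbatoMorandinRomito2011}.

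\emph{Fallback route.} No trilinear term survives. Expand $\|d\|_{\ell^2}^2=\|w\|_{\ell^2}^2-2\langle u,w\rangle+\|u\|_{\ell^2}^2$. Use:
\begin{itemize}
\item the Leray--Hopf inequality for $w$, which discards the flux of $w$ with a favourable sign;
\item the energy equality for $u$;
\item the cross-product identity, whose truncation errors $N_Mw_M^2u_{M+1}$ and $N_Mu_M^2w_{M+1}$ vanish by the decay of $u$.
\end{itemize}
Every remaining nonlinear term carries a factor of $u$. Substituting $w=u+d$ leaves exactly $\mathcal C=\sum_nN_n(u_{n+1}d_n^2-u_nd_nd_{n+1})$. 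This is Proposition~\ref{prop:superexp-weak-strong}, and it transfers verbatim to geometric shells.

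\textbf{Step 2 also needs repair.} In the incoming term, $N_n^sN_{n-1}u_{n-1}^2\leq\Lambda^sN_{n-1}^{1-q}\|u\|_{X^q_N}\|u\|_{X^s_N}$. So your constant grows like $\Lambda^s$. The interval on which the $X^s_N$ norm at most doubles therefore depends on $s$ and shrinks to zero as $s\to\infty$. You do not get all $X^s_N$ bounds on one interval, which Step 4 needs beyond $T_{\max}$. There are two ways to fix this.
\begin{itemize}
\item Iterate your estimate, which is linear in the top norm, over $\lceil T_*/T_s\rceil$ subintervals. Use the uniform $X^q_N$ bound on $[0,T_*]$, and run the construction in $X^q_N\cap X^s_N$ so the top norm is finite a priori. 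This route does not use non-negativity.
\item Use Proposition~\ref{prop:classical-tail-criterion}, as the paper does. Positivity and the integrating factor give $G_n\leq\|h\|_{X^s_N}+\nu^{-1}\Lambda^{s-2\alpha}A_{n-1}G_{n-1}$, with $A_n\to0$ by the $X^q_N$ bound. This yields every $X^s_N$ bound on the whole interval at once, with no loss of time.
\end{itemize}
The passage to $q'<q$ in Step 4 is harmless but unnecessary, since $u(t)\in X^q_{N,0}$ for $t<T_{\max}$.
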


Under \eqref{eq:classical-normalization}, the construction follows \cite[Remark~3.4]{BarbatoMorandinRomito2011}. We record the estimates that give a uniform local existence time for every fixed geometric ratio $\Lambda>1$. Fix $q>\max\{0,q_{\mathrm{cl}}(\alpha)\}$ and set
\[
  \begin{gathered}
    E_T^q:=C([0,T];X_{N,0}^q),\qquad
    \|u\|_{E_T^q}:=\sup_{0\leq t\leq T}\|u(t)\|_{X_N^q},\\
    \theta:=\max\left\{0,\frac{1-q}{2\alpha}\right\}<1.
  \end{gathered}
\]
Write $B_0(u):=-N_0u_0u_1$ and $B_n(u):=N_{n-1}u_{n-1}^2-N_nu_nu_{n+1}$ for $n\geq1$. The geometric shell relation gives
\[
  \begin{aligned}
    \|B(u)\|_{X_N^{2q-1}}&\leq C\|u\|_{X_N^q}^2,\\
    \|B(u)-B(v)\|_{X_N^{2q-1}}
      &\leq C(\|u\|_{X_N^q}+\|v\|_{X_N^q})\|u-v\|_{X_N^q}.
  \end{aligned}
\]
Moreover, $B$ maps $X_{N,0}^q$ into $X_{N,0}^{2q-1}$, since the weighted products tend to zero. For $(S(t)z)_n:=\exp(-\nu N_n^{2\alpha}t)z_n$ and $0<t\leq1$,
\[
  \|S(t)z\|_{X_N^q}\leq C t^{-\theta}\|z\|_{X_N^{2q-1}}.
\]
Indeed, the required multiplier is $N_n^{1-q}\exp(-\nu N_n^{2\alpha}t)$; it is bounded by $Ct^{-(1-q)/(2\alpha)}$ when $q<1$, and by a constant when $q\geq1$. The semigroup is strongly continuous on $X_{N,0}^q$. Since $\theta<1$, the mapping
\[
  (\mathcal D u)(t):=\int_0^t S(t-r)B(u(r))\,\dd r
\]
belongs to $E_T^q$ for $u\in E_T^q$ and $0<T\leq1$, and satisfies
\[
  \begin{aligned}
    \|\mathcal D u\|_{E_T^q}
      &\leq C T^{1-\theta}\|u\|_{E_T^q}^2,\\
    \|\mathcal D u-\mathcal D v\|_{E_T^q}
      &\leq C T^{1-\theta}(\|u\|_{E_T^q}+\|v\|_{E_T^q})
         \|u-v\|_{E_T^q}.
  \end{aligned}
\]
Here the constants depend only on $q$ and the fixed model parameters. Given $R>0$ and $\|h\|_{X_N^q}\leq R$, the map $u\mapsto S(\cdot)h+\mathcal D u$ is a contraction on the closed ball of radius $2R$ in $E_T^q$ whenever $4CRT^{1-\theta}\leq1/2$. Thus one may take
\[
  T=\min\left\{1,(8CR)^{-1/(1-\theta)}\right\}.
\]
The same estimates hold uniformly for the Galerkin truncations. The mild solution satisfies the coordinate equations; since $q>0$, it is finite-energy, and \eqref{eq:integrating-factor} preserves non-negativity.

The regularity criterion below makes the locally constructed solution smooth for smooth initial data. The comparison argument of \cite[Proposition~3.2]{BarbatoMorandinRomito2011} gives uniqueness in class of non-negative Leray--Hopf solution. This identifies the local solution with the Galerkin solution. Finally, if \eqref{eq:classical-continuation} failed, its $X_N^q$ norm would stay bounded up to $T_{\max}$. Restarting the local construction at times approaching $T_{\max}$, with a common positive lifespan, and applying the following criterion would extend smoothness beyond $T_{\max}$.

\begin{proposition}[Classical critical-tail criterion]
  \label{prop:classical-tail-criterion}
  Let $N$ satisfy \eqref{eq:geometric-shells}, and let $u$ be a non-negative finite-energy solution on $[0,T]$ with $h\in H_N^\infty$. If
  \begin{equation}
    \lim_{n\to\infty}\sup_{0\leq t\leq T}
    N_n^{q_{\mathrm{cl}}(\alpha)}u_n(t)=0,
    \label{eq:classical-critical-tail}
  \end{equation}
  then $u$ is smooth on $[0,T]$. In particular, this holds if
  $\sup_{0\leq t\leq T}\|u(t)\|_{X_N^q}<\infty$ for some $q>q_{\mathrm{cl}}(\alpha)$.
\end{proposition}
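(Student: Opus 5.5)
This is a Barbato--Morandin--Romito style bootstrap: the smallness of the critical tail in \eqref{eq:classical-critical-tail} is turned into decay at every algebraic rate $q>q_{\mathrm{cl}}(\alpha)$, one step at a time. Write $q_0:=q_{\mathrm{cl}}(\alpha)=1-2\alpha$ and $y_n(t):=N_n^{q_0}u_n(t)$. The hypothesis gives $\delta_M:=\sup_{n\geq M}\sup_{[0,T]}y_n\to0$ as $M\to\infty$.

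\textbf{Single step: from the critical rate to a higher rate.} Fix $s>q_0$ and a small increment $\gamma>0$ with $\gamma<2\alpha$. The target is
\[
\sup_{[0,T]}\|u(t)\|_{X_N^{q_0+\gamma}}<\infty .
\]
Use the integrating-factor formula \eqref{eq:integrating-factor}, drop the nonnegative factor $N_nu_{n+1}$ in the exponent, and keep only the dissipation. This gives
\[
u_n(t)\leq h_ne^{-\nu N_n^{2\alpha}t}+N_{n-1}\int_0^t u_{n-1}(r)^2e^{-\nu N_n^{2\alpha}(t-r)}\,\dd r
\leq h_n+\frac{N_{n-1}}{\nu N_n^{2\alpha}}\sup_{[0,t]}u_{n-1}^2 .
\]
Set $z_n:=\sup_{[0,T]}N_n^{q_0+\gamma}u_n$ and use $N_n=\Lambda N_{n-1}$. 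Bound one factor $u_{n-1}$ by $\delta N_{n-1}^{-q_0}$ and the other by $z_{n-1}N_{n-1}^{-q_0-\gamma}$. The exponent of $N_{n-1}$ is then $q_0+\gamma+1-2\alpha-2q_0-\gamma=0$, since $1-2\alpha=q_0$. This yields
\[
z_n\leq N_n^{q_0+\gamma}h_n+C_\Lambda\,\nu^{-1}\Lambda^{\gamma}\,\delta_{n-1}\,z_{n-1},
\]
with $C_\Lambda$ depending only on $\Lambda$ and $\alpha$. Since $h\in H_N^\infty$, the first term is bounded. Choose $M$ so that $C_\Lambda\nu^{-1}\Lambda^\gamma\delta_{M}\leq1/2$. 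Iterating from $n=M$, where $z_M$ is finite because $u_M$ is continuous on $[0,T]$, gives $\sup_n z_n<\infty$.

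\textbf{Iteration.} Each such step improves the rate by $\gamma$, and crucially the smallness comes from the tail of $y$, which is fixed. The improved rate is therefore never needed for smallness. Nothing breaks at the next step: still bound one factor of $u_{n-1}^2$ by the small critical quantity $\delta$, and the other by the current rate $q_k=q_0+k\gamma$. The exponent bookkeeping is $q_{k+1}+1-2\alpha-q_0-q_k=\gamma$, so the same contraction applies with $z^{(k)}$ in place of $z$. After finitely many steps every $X_N^s$ norm is bounded on $[0,T]$, and \eqref{eq:weighted-embeddings} converts these bounds into $H_N^s$ bounds. This is smoothness in the sense of Definition~\ref{def:smoothness}.

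\textbf{The ``in particular'' clause.} Suppose $\sup_{[0,T]}\|u\|_{X_N^q}\leq K$ with $q>q_0$. Then $y_n\leq KN_n^{q_0-q}\to0$ uniformly in $t$, so \eqref{eq:classical-critical-tail} holds.

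\textbf{Main obstacle.} The delicate point is making the one-step estimate genuinely uniform in time up to $t=0$. The crude bound $\int_0^t e^{-\nu N_n^{2\alpha}(t-r)}\,\dd r\leq(\nu N_n^{2\alpha})^{-1}$ already handles this without any waiting time, because the smooth data absorb the initial layer. The other point to check is the case $q_0\leq0$, that is $\alpha\geq1/2$. There the hypothesis still supplies smallness, since the energy bound gives $u_n\to0$, and the same recursion goes through. One must also verify that the first $M$ shells are finite, which follows from coordinatewise continuity.
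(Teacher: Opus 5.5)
Your single step is the paper's proof. You drop the nonlinear damping in \eqref{eq:integrating-factor} and integrate the heat kernel to gain $(\nu N_n^{2\alpha})^{-1}$. You then split $u_{n-1}^2$ into a small critical factor times a factor at the target rate, and note that the power of $N_{n-1}$ cancels because $1-q_{\mathrm{cl}}(\alpha)=2\alpha$. Finally you close $z_n\leq\|h\|_{X_N^{q_0+\gamma}}+\tfrac12 z_{n-1}$ for large $n$, using coordinate continuity on the low shells. The paper does exactly this, in one stroke for arbitrary $s$, with coefficient $\Lambda^{s-2\alpha}\nu^{-1}A_{n-1}$.

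The problem is your Iteration paragraph, which fails as written. Suppose you bound one factor of $u_{n-1}$ by $\delta N_{n-1}^{-q_0}$ and the other by the already-known rate $q_k$. Your own bookkeeping then gives the exponent $\gamma$, not $0$. The coefficient is therefore of size $\Lambda^{q_{k+1}-2\alpha}\nu^{-1}\delta\,N_{n-1}^{\gamma}$, which grows with $n$, so no contraction is available. If instead you meant to put the new rate $q_{k+1}$ on the second factor, the exponent is $0$, and the step is just your single step with $\gamma$ replaced by $(k+1)\gamma$.

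That second reading shows the iteration is unnecessary. The restriction $\gamma<2\alpha$ is never used: the exponent $q_0+\gamma+1-2\alpha-2q_0-\gamma$ vanishes for every $\gamma>0$. Taking $\gamma=s-q_0$ for arbitrary $s>q_0$ gives every $X_N^s$ bound directly, which is what the paper does. Alternatively, once you are strictly above critical, bounding both factors at rate $q$ yields rate $2q-q_0$ with no smallness needed.

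A minor point: your remark that for $\alpha\geq1/2$ ``the energy bound gives $u_n\to0$'' is inaccurate at $\alpha=1/2$. There the energy inequality only gives $\sup_t u_n(t)\leq\|h\|_{\ell^2}$, not uniform decay. Since \eqref{eq:classical-critical-tail} is a hypothesis of the proposition, nothing in the proof depends on this.
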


This is the non-negative case of \cite[Proposition~3.3]{BarbatoMorandinRomito2011}, in our normalization. We include the short argument to make the dependence on the geometric ratio explicit.

\begin{proof}
  Fix $s>0$ and write
  \[
    A_n:=\sup_{0\leq t\leq T}N_n^{q_{\mathrm{cl}}(\alpha)}u_n(t),
    \qquad G_n:=\sup_{0\leq t\leq T}N_n^su_n(t).
  \]
  Each $G_n$ is finite by coordinate continuity. Positivity and \eqref{eq:integrating-factor} imply, for $n\geq1$,
  \[
    u_n(t)\leq h_n\exp(-\nu N_n^{2\alpha}t)
      +N_{n-1}\int_0^t\exp(-\nu N_n^{2\alpha}(t-r))u_{n-1}(r)^2\,\dd r.
  \]
  Since $1-q_{\mathrm{cl}}(\alpha)=2\alpha$, integration of the exponential kernel yields
  \[
    G_n\leq\|h\|_{X_N^s}
      +\frac{\Lambda^{s-2\alpha}}{\nu}A_{n-1}G_{n-1}.
  \]
  Choose $n_0\geq1$ so that the coefficient of $G_{n-1}$ is at most $1/2$ for $n\geq n_0$. Induction gives
  \[
    \sup_{n\geq n_0-1}G_n
    \leq\max\{G_{n_0-1},2\|h\|_{X_N^s}\}<\infty.
  \]
  The remaining coordinates are bounded as well. Since $s>0$ is arbitrary, \eqref{eq:weighted-embeddings} proves smoothness. A uniform $X_N^q$ bound with $q>q_{\mathrm{cl}}(\alpha)$ implies \eqref{eq:classical-critical-tail} by multiplication by $N_n^{q_{\mathrm{cl}}(\alpha)-q}$.
\end{proof}

\subsection{Super-exponential shells and energy solutions}
\label{subsec:superexp-energy}

We now consider \eqref{eq:dyadic-model} with
\begin{equation}
  N_n=N_0^{b^n},\qquad N_0>1,\quad b>1,
  \label{eq:superexp-shells}
\end{equation}
so that $N_{n+1}=N_n^b$. Throughout this subsection, $\nu>0$ and $\alpha\geq0$. The restriction $b<2$ is not needed for the energy construction.

We use the spaces and solution concepts of Subsection~\ref{subsec:classical-setting} with the scales \eqref{eq:superexp-shells}. For every $c>0$,
\begin{equation}
  \sum_{n\geq0}N_n^{-c}
  =\sum_{n\geq0}\exp(-cb^n\log N_0)<\infty.
  \label{eq:superexp-summability}
\end{equation}
Indeed, the ratio of consecutive terms tends to zero. Hence \eqref{eq:weighted-embeddings} remains valid, and smoothness is again equivalent to bounds in every $X_N^s$ on compact time intervals.

\begin{proposition}[Super-exponential energy solutions]
  \label{prop:superexp-energy}
  Let $N$ satisfy \eqref{eq:superexp-shells}, with $b>1$, $\nu>0$, and $\alpha\geq0$. For every non-negative $h\in\ell^2$, there exists a global non-negative finite-energy solution of \eqref{eq:dyadic-model}, obtained by Galerkin approximation. It satisfies
  \begin{equation}
    \|u(t)\|_{\ell^2}^2
    +2\nu\int_0^t\|u(\tau)\|_{H_N^\alpha}^2\,\dd\tau
    \leq\|h\|_{\ell^2}^2,\qquad t\geq0.
    \label{eq:superexp-galerkin-bound}
  \end{equation}
\end{proposition}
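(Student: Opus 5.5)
The plan is a Galerkin approximation with uniform energy bounds and a coordinatewise compactness argument, following the geometric-shell construction recalled after Proposition~\ref{prop:classical-energy}. Nothing in that argument used the geometric growth of $N_n$.

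For each $M\geq0$, consider the truncated system: the equations of \eqref{eq:dyadic-model} for $0\leq n\leq M$, with $u^{(M)}_{M+1}:=0$ and initial data $h_0,\dots,h_M$. This is a finite-dimensional ODE with polynomial right-hand side, so it has a local solution. The integrating-factor formula \eqref{eq:integrating-factor} applies verbatim on the truncated system. Hence the solution stays non-negative, since both terms in \eqref{eq:integrating-factor} are non-negative when $h_n\geq0$; this holds for $n=0$ (no incoming term) and inductively in $n$.

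Multiplying by $2u_n$ and summing gives \eqref{eq:truncated-energy} with the boundary flux absent, because $u^{(M)}_{M+1}=0$. Thus
\[
\sum_{n=0}^M u^{(M)}_n(t)^2+2\nu\int_0^t\sum_{n=0}^M N_n^{2\alpha}u^{(M)}_n(\tau)^2\,\dd\tau=\sum_{n=0}^Mh_n^2\leq\|h\|_{\ell^2}^2 .
\]
This bounds the solution uniformly in $M$, so it is global in time.

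Next, fix a coordinate $n$. Its derivative satisfies
\[
|\dot u^{(M)}_n|\leq \nu N_n^{2\alpha}\|h\|_{\ell^2}+N_{n-1}\|h\|_{\ell^2}^2+N_n\|h\|_{\ell^2}^2
\]
uniformly in $M\geq n+1$ and $t\geq0$. So for each $n$ the family $\{u^{(M)}_n\}_M$ is uniformly bounded and equi-Lipschitz on $[0,\infty)$. Arzel\`a--Ascoli on each $[0,k]$, together with a diagonal extraction over $n$ and $k$, yields a subsequence $M_j$ with $u^{(M_j)}_n\to u_n$ locally uniformly on $[0,\infty)$ for every $n$.

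It remains to pass to the limit. Each coordinate equation involves only $u_{n-1},u_n,u_{n+1}$, so the integrated equations
\[
u_n(t)=h_n+\int_0^t\bigl(N_{n-1}u_{n-1}^2-\nu N_n^{2\alpha}u_n-N_nu_nu_{n+1}\bigr)\,\dd\tau
\]
pass to the limit by uniform convergence. This gives $u_n\in C^1$ solving the coordinate equations, with $u_n\geq0$.

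For the energy bound, fix a finite $K$. Apply the truncated identity restricted to indices $n\leq K$, as an inequality, and let $j\to\infty$ by uniform convergence on $[0,t]$:
\[
\sum_{n\leq K}u_n(t)^2+2\nu\int_0^t\sum_{n\leq K}N_n^{2\alpha}u_n^2\,\dd\tau\leq\|h\|_{\ell^2}^2 .
\]
Letting $K\to\infty$ by monotone convergence gives \eqref{eq:superexp-galerkin-bound}. In particular $u(t)\in\ell^2$ for every $t$, so $u$ is a finite-energy solution in the sense of Definition~\ref{def:energy-solutions}.

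There is no real obstacle in this argument. The one point to note is that the compactness is purely coordinatewise: the derivative bound for coordinate $n$ depends on $N_n$, which grows super-exponentially, but that is harmless because each coordinate is treated separately. Neither $b<2$ nor $\alpha>0$ is used anywhere; when $\alpha=0$ the dissipation term is simply the bounded linear term $\nu u_n$.
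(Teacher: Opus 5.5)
Your proposal is correct and matches the paper's proof essentially step for step. The shared steps are: Galerkin truncation with $u^{(M)}_{M+1}=0$; non-negativity from the integrating-factor formula; the truncated energy identity for global existence and uniform bounds; $M$-independent coordinatewise derivative bounds with Arzel\`a--Ascoli and a diagonal extraction; passage to the limit in each three-coordinate integral equation; and the energy inequality via a finite block followed by monotone convergence. One minor point: the induction in $n$ for non-negativity is unnecessary, since the exponential factors are always positive and $u_{n-1}^2\geq0$ regardless of signs.
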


\begin{proof}
  Let $P_M$ project onto coordinates $0,\ldots,M$. Solve the finite system with initial datum $P_Mh$ and upper boundary $u_{M+1}^{(M)}=0$. Cancellation gives
  \[
    \sum_{n=0}^M|u_n^{(M)}(t)|^2
    +2\nu\int_0^t\sum_{n=0}^M N_n^{2\alpha}|u_n^{(M)}(\tau)|^2\,\dd\tau
    =\|P_Mh\|_{\ell^2}^2.
  \]
  This prevents finite-time escape in the finite-dimensional system, so each approximation is global. Formula~\eqref{eq:integrating-factor} also applies to the finite system and preserves non-negativity.

  Put $R:=\|h\|_{\ell^2}$. Every coordinate is bounded by $R$, and, for fixed $n\geq1$ and $M\geq n+1$,
  \[
    |\dot u_n^{(M)}(t)|
    \leq\nu N_n^{2\alpha}R+(N_{n-1}+N_n)R^2.
  \]
  For $n=0$, omit the $N_{n-1}$ term. These bounds are independent of $M$ and $t$. Arzel\`a--Ascoli and a diagonal extraction therefore give integers $M_j\to\infty$ and functions $u_n$ such that $u_n^{(M_j)}\to u_n$ uniformly on every compact time interval, for every fixed $n$.

  The integral equation for coordinate $n$ involves only coordinates $n-1,n,n+1$. Their uniform convergence permits passage to the limit in that equation. Consequently, $u_n\in C^1([0,\infty))$, $u_n(0)=h_n$, and \eqref{eq:dyadic-model} holds coordinatewise. Non-negativity passes to the limit.

  For a fixed $L$, retain only coordinates $0,\ldots,L$ in the finite energy identity and let $j\to\infty$. Uniform coordinate convergence gives
  \[
    \sum_{n=0}^L u_n(t)^2
    +2\nu\int_0^t\sum_{n=0}^L N_n^{2\alpha}u_n(\tau)^2\,\dd\tau
    \leq R^2.
  \]
  Letting $L\to\infty$ and using monotone convergence proves \eqref{eq:superexp-galerkin-bound}. In particular, $u(t)\in\ell^2$ for every $t\geq0$.
\end{proof}

This is the classical Galerkin strategy of \cite{Cheskidov2008}; the proof records why it remains valid when the shell ratios are unbounded.

\begin{proposition}[Super-exponential energy balance]
  \label{prop:superexp-energy-balance}
  Every non-negative finite-energy solution with scales \eqref{eq:superexp-shells} satisfies \eqref{eq:energy-inequality} for every $t_0\leq t$ in its interval of existence. Moreover, the limit
  \[
    \mathfrak F(t_0,t)
    :=2\lim_{M\to\infty}\int_{t_0}^t N_Mu_M(\tau)^2u_{M+1}(\tau)\,\dd\tau
  \]
  exists and is non-negative, and
  \begin{equation}
    \|u(t)\|_{\ell^2}^2
    +2\nu\int_{t_0}^t\|u(\tau)\|_{H_N^\alpha}^2\,\dd\tau
    +\mathfrak F(t_0,t)=\|u(t_0)\|_{\ell^2}^2.
    \label{eq:superexp-flux-balance}
  \end{equation}
  If $u$ is smooth on $[t_0,t]$, then $\mathfrak F(t_0,t)=0$.
\end{proposition}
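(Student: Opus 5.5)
The plan is to start from the truncated energy identity \eqref{eq:truncated-energy}, which holds for any finite-energy solution on $[t_0,t]$. It is valid here because each coordinate is $C^1$ and the cross terms $N_{n-1}u_{n-1}^2u_n-N_nu_nu_{n+1}u_n$ telescope over $0\leq n\leq M$; this step is independent of the shell geometry. For fixed $t_0\leq t$, write
\[
  F_M:=2\int_{t_0}^t N_Mu_M(\tau)^2u_{M+1}(\tau)\,\dd\tau,\qquad
  S_M:=\sum_{n=0}^M u_n(t)^2+2\nu\int_{t_0}^t\sum_{n=0}^M N_n^{2\alpha}u_n(\tau)^2\,\dd\tau .
\]
Then $S_M+F_M=\sum_{n=0}^M u_n(t_0)^2$. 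Non-negativity of the solution gives $F_M\geq0$, so $S_M$ is bounded by $\|u(t_0)\|_{\ell^2}^2$. Since $S_M$ is non-decreasing in $M$, monotone convergence yields a finite limit $S_\infty$, which equals the left side of \eqref{eq:energy-inequality} without the flux. Thus \eqref{eq:energy-inequality} holds for every pair $t_0\leq t$.

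Next I will deduce existence of the flux limit. Both $S_M$ and the partial sums $\sum_{n\leq M}u_n(t_0)^2$ converge: the first by the monotone argument above, and the second because $u(t_0)\in\ell^2$. Hence $F_M$, being their difference, converges. Its limit $\mathfrak F(t_0,t)$ is non-negative because each $F_M\geq0$. Passing to the limit in the identity gives \eqref{eq:superexp-flux-balance}. No further estimate is needed, since the super-exponential structure does not enter.

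The remaining claim is the main point: $\mathfrak F(t_0,t)=0$ when $u$ is smooth on $[t_0,t]$. Smoothness together with \eqref{eq:weighted-embeddings}, which is valid by \eqref{eq:superexp-summability}, gives, for any $r>0$, a constant $C_r$ with $u_n(\tau)\leq C_rN_n^{-r}$ for all $n$ and $\tau\in[t_0,t]$. I will then bound the integrand by
\[
  N_Mu_M^2u_{M+1}\leq C_r^3N_M^{1-2r}N_{M+1}^{-r}=C_r^3N_M^{1-(2+b)r}.
\]
Choosing $r>1/(2+b)$, for instance $r=1$, makes the exponent negative. Because $N_M\to\infty$, the integrand tends to zero uniformly on $[t_0,t]$, so $F_M\to0$ and therefore $\mathfrak F=0$. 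The only point needing care is that the smoothness bounds must be uniform on the compact interval $[t_0,t]$, which is exactly what Definition~\ref{def:smoothness} provides.
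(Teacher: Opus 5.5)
Your proposal is correct and follows the paper's proof essentially step by step. Both arguments use the truncated energy identity with its non-negative boundary flux, monotone convergence to obtain the energy inequality, and the flux limit written as a difference of convergent quantities. In the smooth case, both conclude with the bound $N_Mu_M^2u_{M+1}\leq C_r^3N_M^{1-(b+2)r}$ for $r>1/(b+2)$, using $N_{M+1}=N_M^b$.
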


\begin{proof}
  The finite identity \eqref{eq:truncated-energy} is unchanged by the choice of shells. Its boundary flux is non-negative. Discarding that term and letting $M\to\infty$ proves the energy inequality by monotone convergence. The dissipation integral is therefore finite. Solving \eqref{eq:truncated-energy} for the integrated boundary flux expresses it as a difference of convergent quantities, proving existence of the limit and \eqref{eq:superexp-flux-balance}; monotonicity of the flux in $M$ is not required.

  If $u$ is smooth, choose $r>1/(b+2)$. Since $N_{M+1}=N_M^b$,
  \[
    \sup_{\tau\in[t_0,t]}|N_Mu_M(\tau)^2u_{M+1}(\tau)|
    \leq C_r^3N_M^{1-(b+2)r}\longrightarrow0.
  \]
  Hence the integrated flux vanishes.
\end{proof}

\subsection{Local existence and propagation of smoothness}
\label{subsec:superexp-local}

For the remainder of this section, assume \eqref{eq:superexp-shells} with $1<b<2$, $\nu>0$, and $\alpha\geq0$. We adapt the weighted-space construction underlying \cite[Remark~3.4]{BarbatoMorandinRomito2011}. The estimates below use a sufficiently strong weight and do not require a positive dissipation exponent. Similar to Proposition \ref{prop:classical-local}, we have following local existence and continuation results for super-exponential shells.

\begin{proposition}[Local construction in a weighted space]
  \label{prop:superexp-local-construction}
  Let $\sigma\geq1/(2-b)$. For every $h\in X_{N,0}^\sigma$, there exists a unique local mild solution of \eqref{eq:dyadic-model} in $C([0,T];X_{N,0}^\sigma)$. The existence time and the solution norm can be controlled by a bound on $\|h\|_{X_N^\sigma}$ and the fixed parameters, uniformly for the Galerkin truncations. Non-negative initial data give a non-negative solution.
\end{proposition}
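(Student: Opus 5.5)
The plan is to repeat the mild-solution fixed point from Proposition~\ref{prop:classical-local}, now in $E_T^\sigma:=C([0,T];X_{N,0}^\sigma)$ with $\|u\|_{E_T^\sigma}=\sup_{t\le T}\|u(t)\|_{X_N^\sigma}$. The difference is the treatment of the nonlinearity. Because the super-exponential shell ratios are unbounded, we will not rely on smoothing from the heat semigroup $S(t)$. With $\alpha=0$ allowed there is essentially no smoothing anyway. Instead we use $S(t)$ only as a contraction on $X_{N,0}^\sigma$, and we show that $B$ maps $X_N^\sigma$ boundedly into $X_N^\sigma$ itself. Here $B_0(u)=-N_0u_0u_1$ and $B_n(u)=N_{n-1}u_{n-1}^2-N_nu_nu_{n+1}$. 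Once this is done, the Duhamel map $u\mapsto S(\cdot)h+\int_0^tS(t-r)B(u(r))\,\dd r$ obeys $\|\mathcal Du\|_{E_T^\sigma}\le CT\|u\|_{E_T^\sigma}^2$, together with the analogous Lipschitz bound. It is then a contraction on the ball of radius $2R$ once $T\le(8CR)^{-1}$, where $R\ge\|h\|_{X_N^\sigma}$.

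The key step is the product estimate. Write $u_n=N_n^{-\sigma}a_n$ with $|a_n|\le\|u\|_{X_N^\sigma}$. The outgoing term satisfies
\[
N_n^\sigma N_n|u_nu_{n+1}|\le N_n^{1-\sigma b}\|u\|_{X_N^\sigma}^2\le N_0\|u\|_{X_N^\sigma}^2,
\]
using $N_{n+1}=N_n^b$ and $\sigma b\ge1$. The incoming term satisfies
\[
N_n^\sigma N_{n-1}u_{n-1}^2\le N_{n-1}^{\sigma b+1-2\sigma}\|u\|_{X_N^\sigma}^2 .
\]
Its exponent $1-\sigma(2-b)$ is $\le0$ exactly when $\sigma\ge1/(2-b)$, which explains the hypothesis. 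Both bounds are uniform in $n$, and polarization gives the difference estimate $\|B(u)-B(v)\|_{X_N^\sigma}\le C(\|u\|+\|v\|)\|u-v\|$. We also need $B$ to preserve $X_{N,0}^\sigma$. When the exponents are strictly negative this is immediate. At the endpoint $\sigma=1/(2-b)$ it still holds, since the weighted product $a_{n-1}^2$, resp. $a_na_{n+1}$, tends to zero. The remaining facts are standard: $S(t)$ is strongly continuous on $X_{N,0}^\sigma$, and $t\mapsto\int_0^tS(t-r)B(u(r))\,\dd r$ is continuous into $X_{N,0}^\sigma$ by dominated convergence in the weighted sup norm. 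Uniqueness in $C([0,T];X_{N,0}^\sigma)$ follows from the Lipschitz bound and a Gronwall argument on short subintervals.

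The Galerkin truncations use $P_MB$ with $u_{M+1}=0$. They satisfy the same bounds with the same constants, because truncation only removes terms. This gives a lifespan and norm bound that are uniform in $M$ and depend only on $R$ and the fixed parameters. Non-negativity follows as before. The mild solution is coordinatewise $C^1$ and solves the coordinate equations, so the integrating-factor formula~\eqref{eq:integrating-factor} applies and preserves $u_n\ge0$. Alternatively, one can observe that Picard iterates started from non-negative data stay non-negative. That is less direct because of the sign of the outgoing term, so we will use the integrating-factor route.

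The step I expect to be the main obstacle is the endpoint $\sigma=1/(2-b)$. At the endpoint the incoming estimate has no spare decay, so membership in the closed subspace $X_{N,0}^\sigma$ has to be checked carefully. The time continuity of the Duhamel term also needs care, since we cannot borrow any smoothing from $S(t)$ when $\alpha=0$.
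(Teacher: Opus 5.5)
Your proposal is correct and follows the paper's proof essentially step for step. The paper likewise:
\begin{enumerate}
  \item bounds the incoming and outgoing terms by $N_{n-1}^{1-(2-b)\sigma}\|u\|_{X_N^\sigma}^2$ and $N_n^{1-b\sigma}\|u\|_{X_N^\sigma}^2$, and handles the endpoint $\sigma=1/(2-b)$ by noting that the incoming weighted term is the square of a vanishing weighted coordinate;
  \item uses $S(t)$ only as a contraction on $X_{N,0}^\sigma$, with no smoothing;
  \item runs the fixed point on the $2R$-ball with $T\le(8C_\sigma R)^{-1}$ and proves uniqueness by Gronwall;
  \item transfers the same estimates to the truncations $P_MB(P_M\cdot)$;
  \item obtains non-negativity from the integrating-factor formula.
\end{enumerate}
One minor remark: since $1/(2-b)>1/b$, you have $1-b\sigma<0$ strictly, so your outgoing bound $N_0\|u\|_{X_N^\sigma}^2$ can be sharpened to $\|u\|_{X_N^\sigma}^2$.
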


\begin{proof}
  Set $B_0(u):=-N_0u_0u_1$ and
  $B_n(u):=N_{n-1}u_{n-1}^2-N_nu_nu_{n+1}$ for $n\geq1$. The incoming term satisfies
  \[
    N_n^\sigma N_{n-1}|u_{n-1}|^2
    \leq N_{n-1}^{1-(2-b)\sigma}\|u\|_{X_N^\sigma}^2,
  \]
  while the outgoing term satisfies
  \[
    N_n^\sigma N_n|u_nu_{n+1}|
    \leq N_n^{1-b\sigma}\|u\|_{X_N^\sigma}^2.
  \]
  Both coefficients are bounded. At $\sigma=1/(2-b)$, the incoming weighted term still tends to zero for $u\in X_{N,0}^\sigma$, since it is the square of a vanishing weighted coordinate. The outgoing coefficient has a strictly negative exponent. This implies $B(u)\in X_{N,0}^\sigma$ and first inequality. Factoring differences of squares and products gives second inequality.
  \begin{align}
    \|B(u)\|_{X_N^\sigma}
      &\leq C_\sigma\|u\|_{X_N^\sigma}^2,\notag\\
    \|B(u)-B(v)\|_{X_N^\sigma}
      &\leq C_\sigma(\|u\|_{X_N^\sigma}+\|v\|_{X_N^\sigma})
           \|u-v\|_{X_N^\sigma}.
    \label{eq:superexp-nonlinear-bounds}
  \end{align}
  Thus $B$ is locally Lipschitz from $X_{N,0}^\sigma$ into itself.

  The semigroup $(S(t)u)_n:=\exp(-\nu N_n^{2\alpha}t)u_n$ is a contraction on $X_{N,0}^\sigma$. Its strong continuity follows by splitting into a uniformly small weighted tail and a finite block. On $C([0,T];X_{N,0}^\sigma)$, consider
  \[
    (\Phi u)(t):=S(t)h+\int_0^t S(t-r)B(u(r))\,\dd r.
  \]
  If $R:=\|h\|_{X_N^\sigma}>0$, then on the ball of radius $2R$ the map has norm at most $R+4C_\sigma TR^2$ and Lipschitz constant at most $4C_\sigma TR$. Taking $T\leq(8C_\sigma R)^{-1}$ gives a contraction. For $R=0$, the zero solution is obtained, and uniqueness follows from the same local Lipschitz estimate. In general, let $u$ and $v$ be two mild solutions on a common compact interval $[t_0,t_1]$, with values in $X_{N,0}^\sigma$. Writing their mild equations from time $t_0$ and using that $S(t)$ is a contraction, set
  \[
    d(t):=\|u(t)-v(t)\|_{X_N^\sigma},\qquad
    L(r):=\|u(r)\|_{X_N^\sigma}+\|v(r)\|_{X_N^\sigma}.
  \]
  The local Lipschitz bound \eqref{eq:superexp-nonlinear-bounds} gives
  \[
    d(t)\leq d(t_0)+C_\sigma\int_{t_0}^t L(r)d(r)\,\dd r,
    \qquad t_0\leq t\leq t_1.
  \]
  Since $L$ is continuous, Gronwall's inequality yields
  \[
    d(t)\leq d(t_0)\exp\!\left(C_\sigma\int_{t_0}^t L(r)\,\dd r\right).
  \]
  In particular, if the two solutions have the same value at time $t_0$, then $d(t_0)=0$ and hence $u=v$ throughout $[t_0,t_1]$.

  Since Galerkin projection $P_M$ is a contraction, $P_MB(P_Mu)$ satisfies the same estimates. The same construction therefore gives the uniform Galerkin bounds. Finally, each coordinate of the mild solution satisfies the differential equation. Since $\sigma>0$, \eqref{eq:superexp-summability} implies that the solution takes values in $\ell^2$. Formula~\eqref{eq:integrating-factor} then proves preservation of non-negativity.
\end{proof}

Similar to Proposition \ref{prop:classical-tail-criterion}, we have following smoothness criterion for super-exponential shells.
\begin{lemma}[Propagation of higher weights]
  \label{lem:superexp-higher-weights}
  Let $u$ be a non-negative finite-energy solution on $[0,T]$ with $h\in H_N^\infty$. If, for some $r>1/(2-b)$,
  \[
    M_r:=\sup_{0\leq t\leq T}\|u(t)\|_{X_N^r}<\infty,
  \]
  then $u$ is smooth on $[0,T]$.
\end{lemma}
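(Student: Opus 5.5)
We bootstrap from the given $X_N^r$ bound to every higher weight, in the spirit of the proof of Proposition~\ref{prop:classical-tail-criterion}. Instead of dissipation, which may vanish since $\alpha\geq0$, we use the gap in the exponents produced by $N_n=N_{n-1}^b$. By \eqref{eq:weighted-embeddings} and \eqref{eq:superexp-summability}, it suffices to bound $G_n:=\sup_{0\leq t\leq T}N_n^su_n(t)$ uniformly in $n$ for each $s>r$. Each $G_n$ is finite by continuity of coordinates.

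\emph{Integrating-factor bound.} Non-negativity lets us drop the outgoing term in \eqref{eq:integrating-factor}, so for $n\geq1$
\[
  u_n(t)\leq h_n+N_{n-1}\int_0^t u_{n-1}(\tau)^2\,\dd\tau
  \leq h_n+TN_{n-1}\sup_{[0,T]}u_{n-1}^2 .
\]
We treat one factor of $u_{n-1}$ with the weight $r$ and the other with the weight $s'$, and multiply by $N_n^{s}=N_{n-1}^{bs}$. This gives
\[
  G_n^{(s)}\leq\|h\|_{X_N^s}+T\,N_{n-1}^{1+bs-r-s'}M_rG_{n-1}^{(s')}.
\]

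\emph{Choice of weights.} We aim for an inductive step $s'\mapsto s$ with $bs\leq s'+r-1$, that is $s=(s'+r-1)/b$. This map increases weights only when $s'<(r-1)/(b-1)$. Since $r>1/(2-b)$ means $r(2-b)>1$, i.e.\ $r-1>(b-1)r$, we get $(r-1)/(b-1)>r$. Hence the iteration $s_{k+1}=(s_k+r-1)/b$ starting from $s_0=r$ is increasing, but only up to the fixed point $(r-1)/(b-1)$. It therefore reaches some finite weight, not all weights.

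\emph{Main obstacle.} This fixed point is the main difficulty: the loss of one power of $N_{n-1}$ in the incoming term caps the gain. To obtain arbitrary $s$, I would instead run the induction on the same weight $s$ on both sides. Writing $N_n^su_n\leq\|h\|_{X_N^s}+T N_{n-1}^{1+bs-r-s}M_r G_{n-1}^{(s)}$, the exponent $1+(b-1)s-r$ is positive for large $s$, so this fails too.

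The right tool is a two-step comparison exploiting that $r$ is already supercritical in the local theory. Apply Proposition~\ref{prop:superexp-local-construction} with $\sigma=r$, or with any $\sigma\in[1/(2-b),r)$ so that $u(t)\in X_{N,0}^\sigma$. This yields a mild solution on short intervals of length depending only on $M_r$, and uniqueness identifies it with $u$. On each short interval $[t_0,t_0+\delta]$, run the contraction in $X_N^\sigma$ for larger $\sigma$ as well. Its existence time depends on $\|u(t_0)\|_{X_N^\sigma}$, though, so controlling the growth is again the crux.

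The estimate needed is a linear-in-the-high-norm bound. Since $B$ is bilinear, $\|B(u)\|_{X_N^\sigma}\leq C\|u\|_{X_N^{r}}\|u\|_{X_N^\sigma}$ should hold for any $\sigma\geq r$ with $r>1/(2-b)$. Check the incoming term: $N_n^\sigma N_{n-1}u_{n-1}^2=N_{n-1}^{b\sigma+1-\sigma-r}(N_{n-1}^\sigma u_{n-1})(N_{n-1}^ru_{n-1})$, and the exponent $b\sigma+1-\sigma-r$ is not bounded for large $\sigma$.

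So the final plan is to exploit time regularity, i.e.\ the bound $|\dot u_n|\lesssim$ (to be developed in Section~\ref{sec:acceleration}), or dissipation. Failing that, I would iterate the step above to the fixed point $(r-1)/(b-1)$ and then restart with this larger value in place of $r$. Because the fixed point exceeds $r$ by the factor $(r-1)/((b-1)r)>1$, it strictly improves the supercritical margin. Iterating the restart raises the available weight to infinity, since the fixed-point map $r\mapsto(r-1)/(b-1)$ diverges when iterated from $r>1/(2-b)$. Each stage costs only a finite constant, which gives $\sup_{[0,T]}\|u\|_{X_N^s}<\infty$ for every $s$.
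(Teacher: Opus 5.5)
Your fallback argument (iterate, then restart with the improved weight) does prove the lemma. It uses the same mechanism as the paper: drop the damping and outflow factors in \eqref{eq:integrating-factor}, then convert powers of $N_{n-1}$ into powers of $N_n$ via $N_n=N_{n-1}^b$. However, the ``main obstacle'' at the fixed point $(r-1)/(b-1)$ is self-inflicted. It arises only because you keep one factor of $u_{n-1}$ at the original weight $r$ after you already control a larger weight $s'$. The paper bounds both factors by the current weight:
\[
  N_n^{s}\,N_{n-1}u_{n-1}(\tau)^2\leq M_{s'}^2\,N_{n-1}^{1+bs-2s'},
  \qquad\text{so one may take}\qquad
  s=G(s'):=\frac{2s'-1}{b}.
\]
Since $G(s')-\frac1{2-b}=\frac2b\bigl(s'-\frac1{2-b}\bigr)$ and $2/b>1$, the weights $r_j=\frac1{2-b}+(2/b)^j\bigl(r-\frac1{2-b}\bigr)$ diverge directly. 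No inner iteration, restarts, or local theory are needed. Your scheme contains exactly this step as the first sub-step of each stage; the remaining sub-steps add nothing essential.

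If you keep your version, two things need fixing. First, the inner iteration $s_{k+1}=(s_k+r-1)/b$ converges to $(r-1)/(b-1)$ but never reaches it. Each restart must therefore use a finite-stage weight chosen with a uniform margin, for instance $s_K-\frac1{2-b}\geq\mu\bigl(r-\frac1{2-b}\bigr)$ for a fixed $\mu\in\bigl(1,\frac1{b-1}\bigr)$. Otherwise the restarted weights could accumulate at a finite value. Second, discard the option you list first, namely using time regularity or dissipation. The acceleration bound of Section~\ref{sec:acceleration} is proved on the maximal smooth interval, whose existence rests on this very lemma through Corollary~\ref{cor:superexp-local-smooth}. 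Invoking it here would be circular, and it is not available for a general finite-energy solution on $[0,T]$. Dissipation also cannot be relied on: for $\alpha=0$ the damping $\nu N_n^{2\alpha}=\nu$ yields no gain in weight.
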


\begin{proof}
  Dropping the damping factors in \eqref{eq:integrating-factor} gives, for $n\geq1$,
  \[
    0\leq u_n(t)
    \leq h_n+N_{n-1}\int_0^t u_{n-1}(\tau)^2\,\dd\tau
    \leq h_n+TM_r^2N_{n-1}^{1-2r}.
  \]
  Put $G(r):=(2r-1)/b$. Since $N_{n-1}^{1-2r}=N_n^{-G(r)}$, this gives a uniform $X_N^{G(r)}$ bound. The first coordinate satisfies $0\leq u_0(t)\leq h_0$. Iterating from $r_0=r$ yields
  \[
    r_j=\frac1{2-b}
       +\left(\frac2b\right)^j\left(r-\frac1{2-b}\right)
       \longrightarrow\infty.
  \]
  Every step uses the same interval $[0,T]$ and the corresponding finite initial weighted norm. Consequently, all $X_N^s$ norms are uniformly bounded, and \eqref{eq:weighted-embeddings} proves smoothness.
\end{proof}

\begin{corollary}[Local smooth solutions]
  \label{cor:superexp-local-smooth}
  Every non-negative $h\in H_N^\infty$ generates a local non-negative smooth solution. It is unique within the class of non-negative smooth solutions on a common interval.
\end{corollary}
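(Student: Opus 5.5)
The plan is to derive the corollary from Proposition~\ref{prop:superexp-local-construction} and Lemma~\ref{lem:superexp-higher-weights}, working in one fixed weighted space.

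\emph{Existence.} Fix $\sigma>1/(2-b)$, strictly above the threshold so that Lemma~\ref{lem:superexp-higher-weights} applies with $r=\sigma$. We first check that $h\in X_{N,0}^\sigma$. Since $h\in H_N^\infty=\bigcap_s X_N^s$, we have $N_n^{\sigma+1}h_n$ bounded. Hence $N_n^\sigma h_n\le CN_n^{-1}\to0$.

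Proposition~\ref{prop:superexp-local-construction} then produces a mild solution $u\in C([0,T];X_{N,0}^\sigma)$. It is non-negative, satisfies the coordinate equations, and takes values in $\ell^2$, so it is a non-negative finite-energy solution on $[0,T]$. Continuity on the compact interval gives $\sup_{0\le t\le T}\|u(t)\|_{X_N^\sigma}<\infty$. Lemma~\ref{lem:superexp-higher-weights} with $r=\sigma$ then shows that $u$ is smooth on $[0,T]$.

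\emph{Uniqueness.} Let $u,v$ be non-negative smooth solutions with datum $h$ on a common interval, and fix a compact subinterval $[0,t_1]$. By smoothness and \eqref{eq:weighted-embeddings}, both are bounded in $X_N^{\sigma+1}$ uniformly on $[0,t_1]$. Therefore both lie in $X_{N,0}^\sigma$, uniformly in $t$: their $\sigma$-weighted coordinates are bounded by $CN_n^{-1}$.

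The key step, and the main obstacle, is to show that each solution is a mild solution with values in $C([0,t_1];X_{N,0}^\sigma)$. The mild formula is immediate coordinatewise from the ODE by Duhamel, since $(S(t)z)_n$ acts diagonally. For continuity into $X_{N,0}^\sigma$, split into a tail and a finite block. On the tail, $\sup_t N_n^\sigma|u_n(t)|\le CN_n^{-1}$ is uniformly small for large $n$. Each coordinate in the finite block is continuous in $t$. Together these give norm continuity.

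With both $u$ and $v$ identified as mild solutions in this class, the Gronwall argument in the proof of Proposition~\ref{prop:superexp-local-construction} applies with $d(0)=0$ and gives $u=v$ on $[0,t_1]$. Since $t_1$ is arbitrary, $u$ and $v$ coincide on the whole common interval.
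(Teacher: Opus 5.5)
Your proposal is correct and follows the paper's own argument. Existence comes from Proposition~\ref{prop:superexp-local-construction} with a fixed $\sigma>1/(2-b)$, upgraded to smoothness by Lemma~\ref{lem:superexp-higher-weights} with $r=\sigma$; uniqueness comes from showing that any smooth solution lies in $C([0,t_1];X_{N,0}^\sigma)$ (higher weight for the tail, coordinate continuity for the finite block) and satisfies the mild equation, so the local Gronwall estimate applies. You simply spell out details the paper leaves implicit, namely $h\in X_{N,0}^\sigma$ and the coordinatewise Duhamel formula.
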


\begin{proof}
  Fix $\sigma>1/(2-b)$. Proposition~\ref{prop:superexp-local-construction} gives a local solution, which is smooth by Lemma~\ref{lem:superexp-higher-weights}. A smooth solution belongs to $C([0,T];X_{N,0}^\sigma)$ for every fixed $\sigma$ on a compact smooth interval: a higher weighted bound controls the tails uniformly, and coordinate continuity controls the finite block. It also satisfies the mild equation. Uniqueness therefore follows from the local construction.
\end{proof}

\subsection{Weak--strong uniqueness and continuation}
\label{subsec:superexp-continuation}

The preceding construction gives uniqueness in a weighted mild-solution class. To identify this local solution with the global energy solutions, we establish uniqueness in comparison with a smooth solution.

\begin{proposition}[Weak--strong uniqueness]
  \label{prop:superexp-weak-strong}
  Let $u$ be a non-negative smooth solution on $[0,T]$. Every non-negative finite-energy solution $v$ on $[0,T]$ with $v(0)=u(0)$ agrees with $u$ there.
\end{proposition}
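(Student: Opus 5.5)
The plan is to run a weighted $\ell^2$ comparison for the difference $w:=v-u$, in the spirit of \cite[Proposition~3.2]{BarbatoMorandinRomito2011}, using weights $2^{-n}$. These weights make the internal cross terms of the nonlinearity telescope. For each fixed $n$ the coordinates are $C^1$, and subtracting the equations gives
\[
  \dot w_n+\nu N_n^{2\alpha}w_n
  =N_{n-1}(v_{n-1}+u_{n-1})w_{n-1}-N_n(v_nw_{n+1}+w_nu_{n+1}).
\]
We use the truncated functional $E_M(t):=\sum_{n=0}^M 2^{-n}w_n(t)^2$, which is bounded by $4\|h\|_{\ell^2}^2$ because both solutions obey the energy inequality of Proposition~\ref{prop:superexp-energy-balance}. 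Differentiating gives
\[
  \tfrac12\dot E_M+\nu\sum_{n\le M}2^{-n}N_n^{2\alpha}w_n^2
  =\sum_{n\le M}2^{-n}\bigl[N_{n-1}(v_{n-1}+u_{n-1})w_{n-1}w_n-N_nv_nw_nw_{n+1}-N_nu_{n+1}w_n^2\bigr].
\]
We pair the incoming term at index $n+1$, which carries weight $2^{-n-1}N_n(v_n+u_n)w_nw_{n+1}$, with the outgoing term $-2^{-n}N_nv_nw_nw_{n+1}$. Their sum is
\[
  2^{-n-1}N_n(u_n-v_n)w_nw_{n+1}=-2^{-n-1}N_nw_n^2w_{n+1}.
\]
Writing $w_{n+1}=v_{n+1}-u_{n+1}$ and using $v_{n+1}\ge0$, this is at most $2^{-n-1}N_nu_{n+1}w_n^2$. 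The remaining diagonal term $-2^{-n}N_nu_{n+1}w_n^2$ is nonpositive and absorbs it, since $2^{-n-1}<2^{-n}$. This is exactly where non-negativity and the choice of weight $2^{-n}$ enter.

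After this cancellation we are left with a single boundary term at $n=M$, namely $-2^{-M}N_Mv_Mw_Mw_{M+1}$. Integrating in time then yields
\[
  E_M(t)\le 2^{-M+1}\int_0^t N_M\,|v_Mw_Mw_{M+1}|\,\dd\tau,
\]
since $E_M(0)=0$.

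The key step, and the main obstacle, is to show that this boundary term tends to zero as $M\to\infty$. The difficulty is that $v$ is only a finite-energy solution. We know $v_M\le\|h\|_{\ell^2}$ and $|w_M|\le 2\|h\|_{\ell^2}$, but $N_M$ grows super-exponentially, so the trivial bound fails. Expanding $|w_{M+1}|\le v_{M+1}+u_{M+1}$ splits the term into two parts.
\begin{itemize}
  \item The part involving $u_{M+1}$ is harmless. Smoothness gives $u_{M+1}\le C_rN_{M+1}^{-r}=C_rN_M^{-br}$ for any large $r$, so $N_Mu_{M+1}\to0$ uniformly on $[0,T]$.
  \item The part involving $v_{M+1}$ gives $2^{-M}\int N_Mv_M|w_M|v_{M+1}$. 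Here I would use $|w_M|\le v_M+u_M$. The term $N_Mv_M^2v_{M+1}$ is exactly the energy flux of $v$, whose time integral converges to $\mathfrak F(0,t)/2$ by Proposition~\ref{prop:superexp-energy-balance}; with the prefactor $2^{-M}$ this tends to zero. The leftover piece $N_Mv_Mu_Mv_{M+1}$ is bounded by $\|h\|^2N_Mu_M$, which decays super-fast by smoothness of $u$.
\end{itemize}

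Hence $E_M(t)\to0$ for every $t$. Because $E_M$ increases in $M$, we conclude $w\equiv0$ on $[0,T]$. If the flux limit were delicate (it exists but is not monotone in $M$), I would instead bound $\int N_Mv_M^2v_{M+1}$ directly through the truncated identity \eqref{eq:truncated-energy}, which bounds it by $\|h\|_{\ell^2}^2$ uniformly in $M$. This uniform bound suffices once multiplied by $2^{-M}$.
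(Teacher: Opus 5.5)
Your proof is correct, and it takes a genuinely different route from the paper's. The paper works in unweighted $\ell^2$. It derives the cross-product identity for $\langle u(t),v(t)\rangle$, which requires justifying passage to the limit in the infinite nonlinear and viscous sums via the weighted decay of $u$. It combines this with the energy \emph{equality} for $u$ and the energy inequality for $v$, reduces the trilinear term to $\mathcal C=\sum_nN_n(u_{n+1}d_n^2-u_nd_nd_{n+1})$, and closes with Gronwall using $K=\sup_{[0,T]}\|u\|_{X_N^1}$.

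You instead put the $2^{-n}$ weights on a finite block of the difference equation. This is the cancellation from the acceleration identity and from \cite[Proposition~3.2]{BarbatoMorandinRomito2011}. After you pair the incoming term at $n+1$ with the outgoing term at $n$, every nonlinear contribution is non-positive because $u,v\geq0$. So there is no Gronwall step and no infinite sum to justify; everything reduces to the single boundary term $2^{-M}N_Mv_Mw_Mw_{M+1}$. Your treatment of that term is sound. The flux $\int_0^tN_Mv_M^2v_{M+1}\,\dd\tau$ of the weak solution need not vanish, and a direct unweighted difference estimate would leave it uncontrolled; the paper sidesteps it through the energy inequality for $v$. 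You bound it by $\|h\|_{\ell^2}^2/2$ uniformly in $M$ via \eqref{eq:truncated-energy}, and the factor $2^{-M}$ then kills it. The mixed pieces only need $N_Mu_M\to0$ and $N_Mu_{M+1}\to0$ uniformly on $[0,T]$.

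Your route is more economical: it needs no energy equality for $u$, no convergence of infinite sums, and only decay of two boundary coordinates of $u$. Run with $v(0)\neq u(0)$, it even shows that $\sum_n2^{-n}(v_n-u_n)^2$ is non-increasing in time. The paper's route instead controls the difference in the stronger unweighted $\ell^2$ norm, together with the dissipation $\int_0^t\|d\|_{H_N^\alpha}^2\,\dd\tau$, at the cost of a growth factor $e^{2Kt}$. For the uniqueness statement itself, either suffices.
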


\begin{proof}
  Both solutions satisfy the energy inequality, and $u$ satisfies the energy equality. In particular, $v$ is uniformly bounded in $\ell^2$. Differentiating the finite cross product, shifting the incoming sums, and passing to the limit gives
  \begin{align}
    \langle u(t),v(t)\rangle-\langle u(0),v(0)\rangle
    ={}&-2\nu\int_0^t\sum_{n\geq0}N_n^{2\alpha}u_nv_n\,\dd\tau
    \notag\\
    &+\int_0^t\mathcal C(\tau)\,\dd\tau,
    \label{eq:cross-product}
  \end{align}
  where
  \[
    \mathcal C:=\sum_{n\geq0}N_n
    \bigl(v_n^2u_{n+1}-v_nv_{n+1}u_n
          +u_n^2v_{n+1}-u_nu_{n+1}v_n\bigr).
  \]
  To justify the limit, the two boundary corrections are
  $N_Mv_M^2u_{M+1}$ and $N_Mu_M^2v_{M+1}$. They vanish uniformly in time because $v$ is bounded in $\ell^2$ and $u$ has arbitrary weighted decay. The same bounds give absolute and uniform convergence of the nonlinear sums. For the viscous sum, Cauchy--Schwarz and a uniform bound on a higher weighted norm of $u$ control the tails uniformly. These facts justify integration and passage to the limit in the finite cross-product identity.

  Set $d:=v-u$. Combining \eqref{eq:cross-product} with the energy equality for $u$ and inequality for $v$, and using $d(0)=0$, yields
  \[
    \|d(t)\|_{\ell^2}^2
    +2\nu\int_0^t\|d(\tau)\|_{H_N^\alpha}^2\,\dd\tau
    \leq-2\int_0^t\mathcal C(\tau)\,\dd\tau.
  \]
  Substitution of $v=u+d$ gives
  \[
    \mathcal C=\sum_{n\geq0}N_n
      \bigl(u_{n+1}d_n^2-u_nd_nd_{n+1}\bigr).
  \]
  The first summand is non-negative. With
  $K:=\sup_{0\leq t\leq T}\|u(t)\|_{X_N^1}<\infty$, we obtain
  \[
    \|d(t)\|_{\ell^2}^2
    \leq2K\int_0^t\sum_{n\geq0}|d_nd_{n+1}|\,\dd\tau
    \leq2K\int_0^t\|d(\tau)\|_{\ell^2}^2\,\dd\tau.
  \]
  Gronwall's inequality proves $d=0$.
\end{proof}

\begin{proposition}[Super-exponential continuation criterion]
  \label{prop:superexp-continuation}
  Let $1<b<2$, $\nu>0$, $\alpha\geq0$, and let $h\in H_N^\infty$ be non-negative. Every non-negative Galerkin solution with initial datum $h$ agrees locally with the unique smooth solution. Let $T_{\max}\in(0,\infty]$ denote its maximal smooth existence time. If $T_{\max}<\infty$, then, for every $\sigma>1/(2-b)$,
  \begin{equation}
    \limsup_{t\uparrow T_{\max}}\|u(t)\|_{X_N^\sigma}=\infty.
    \label{eq:superexp-continuation}
  \end{equation}
  On each compact smooth interval, the solution is unique among non-negative finite-energy solutions with the same initial datum.
\end{proposition}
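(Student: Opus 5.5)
The proof has three parts: identify the Galerkin solution with the smooth solution, prove the blow-up alternative by restarting, and obtain uniqueness from weak--strong uniqueness.

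\emph{Identification.} By Corollary~\ref{cor:superexp-local-smooth}, $h$ generates a non-negative smooth solution $u$ on some $[0,T_1]$. Proposition~\ref{prop:superexp-energy} shows that any Galerkin limit $v$ with datum $h$ is a non-negative finite-energy solution. Proposition~\ref{prop:superexp-weak-strong} applied on $[0,T_1]$ then gives $v=u$ there. Applying the same weak--strong argument on each compact smooth interval gives the uniqueness statement among non-negative finite-energy solutions with the same datum. Hence the maximal smooth existence time $T_{\max}$ is well defined: it is the supremum of times $T$ such that some non-negative smooth solution exists on $[0,T]$. Any two such solutions coincide on their common interval, again by Proposition~\ref{prop:superexp-weak-strong}, so they glue into a single smooth solution on $[0,T_{\max})$.

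\emph{Continuation criterion.} Suppose $T_{\max}<\infty$ and, for some $\sigma>1/(2-b)$,
\[
K:=\sup_{0\le t<T_{\max}}\|u(t)\|_{X_N^\sigma}<\infty .
\]
Choose $\sigma'$ with $1/(2-b)\le\sigma'<\sigma$. Then $u(t)\in X_{N,0}^{\sigma'}$ with $\|u(t)\|_{X_N^{\sigma'}}\le C K$ uniformly in $t$. Proposition~\ref{prop:superexp-local-construction} therefore provides a mild solution starting from any $u(t_0)$, with a lifespan $\tau>0$ that depends only on $K$ and not on $t_0$. Take $t_0=T_{\max}-\tau/2$ and let $w$ be the mild solution from $u(t_0)$ on $[t_0,t_0+\tau]$. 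On $[t_0,T_{\max})$, the smooth solution $u$ is itself a mild solution in $C([t_0,t];X_{N,0}^{\sigma'})$ for each $t<T_{\max}$, exactly as in the proof of Corollary~\ref{cor:superexp-local-smooth}. The uniqueness part of Proposition~\ref{prop:superexp-local-construction} then gives $w=u$ on $[t_0,T_{\max})$.

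It remains to show that the concatenation $\tilde u$ of $u$ on $[0,t_0]$ and $w$ on $[t_0,t_0+\tau]$ is smooth on the whole interval $[0,t_0+\tau]$. First, $w$ is bounded in $X_N^{\sigma'}$ on $[t_0,t_0+\tau]$, so $\tilde u$ is bounded in $X_N^{\sigma'}$ on $[0,t_0+\tau]$. Second, $\tilde u$ is a non-negative finite-energy solution with datum $h\in H_N^\infty$, since the coordinate equations hold piecewise and the pieces match at $t_0$. If $\sigma'>1/(2-b)$, Lemma~\ref{lem:superexp-higher-weights} applies directly with $r=\sigma'$. Choosing $\sigma'$ strictly between $1/(2-b)$ and $\sigma$ ensures this; the local construction is still valid at such $\sigma'$ because it only requires $\sigma'\ge1/(2-b)$. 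This smooth extension past $T_{\max}$ contradicts maximality, which proves \eqref{eq:superexp-continuation}.

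\emph{Main obstacle.} The delicate step is the uniform restart. Boundedness in $X_N^\sigma$ alone does not place $u(t)$ in the space $X_{N,0}^\sigma$ on which the semigroup is strongly continuous. Dropping to the slightly weaker weight $\sigma'<\sigma$ resolves this: the tails satisfy
\[
N_n^{\sigma'}u_n\le K\,N_n^{\sigma'-\sigma}\to0
\]
uniformly in $t$. The local lifespan is then controlled by $K$ alone, independent of the restart time, and the propagation lemma recovers all higher weights from the single bound at $r=\sigma'$.
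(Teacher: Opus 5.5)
Your proposal is correct and follows the paper's own argument. Identification uses Corollary~\ref{cor:superexp-local-smooth} together with Proposition~\ref{prop:superexp-weak-strong}, the restart uses a lifespan depending only on the $X_N^\sigma$ bound via Proposition~\ref{prop:superexp-local-construction}, and smoothness past $T_{\max}$ comes from Lemma~\ref{lem:superexp-higher-weights}; your one deviation, dropping to $\sigma'<\sigma$, is harmless but unnecessary, since each $u(t_*)$ with $t_*<T_{\max}$ is smooth and hence already lies in $X_{N,0}^\sigma$ with lifespan controlled by $\|u(t_*)\|_{X_N^\sigma}\le K$ alone.
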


\begin{proof}
  Corollary~\ref{cor:superexp-local-smooth} supplies a local smooth solution, and Proposition~\ref{prop:superexp-weak-strong} identifies it with every non-negative Galerkin solution. Repeating the construction and using uniqueness on overlaps defines the maximal smooth interval.

  Suppose $T_{\max}<\infty$ and \eqref{eq:superexp-continuation} fails for some $\sigma>1/(2-b)$. Smoothness on compact subintervals then gives
  \[
    M:=\sup_{0\leq t<T_{\max}}\|u(t)\|_{X_N^\sigma}<\infty.
  \]
  At each $t_*<T_{\max}$, the datum $u(t_*)$ is smooth and belongs to $X_{N,0}^\sigma$. Proposition~\ref{prop:superexp-local-construction} gives a restarted solution on $[t_*,t_*+\delta]$, where $\delta>0$ depends only on $M$ and the fixed parameters. Lemma~\ref{lem:superexp-higher-weights} makes it smooth on that interval. Uniqueness identifies it with the previous solution on the overlap. Taking $t_*$ sufficiently close to $T_{\max}$ extends smoothness beyond $T_{\max}$, a contradiction. The last assertion follows from Proposition~\ref{prop:superexp-weak-strong}.
\end{proof}

The exponent $1/(2-b)$ is sufficient for this local construction and continuation argument; no optimality is asserted. 

%% file: sections/3-acceleration.tex
\section{Weighted Acceleration and Backward Propagation}
\label{sec:acceleration}

This section develops the estimates shared in two different shell settings. Differentiating the equations and summing with weights $2^{-n}$ gives a quadratic identity for the time derivatives. The resulting bound makes suitably rescaled time derivatives uniformly small at high shells. An abstract backward propagation lemma then converts a recurrence with a vanishing error into uniform decay of its non-negative solutions.

Let $u$ be a global non-negative Galerkin solution of \eqref{eq:dyadic-model} with smooth initial datum $h$, and let $[0,T_{\max})$ be its maximal smooth existence interval. We use either the geometric shells \eqref{eq:geometric-shells}, with $\alpha>0$, or the super-exponential shells \eqref{eq:superexp-shells}, with $1<b<2$ and $\alpha\geq0$; in both cases $\nu>0$. The local smooth theory and the identification with the Galerkin solution are supplied by Propositions~\ref{prop:classical-local} and \ref{prop:superexp-continuation}, respectively. All time-dependent estimates below are established within the smooth existence interval, without assuming that $T_{\max}$ is infinite.

\subsection{The finite-block acceleration identity}
\label{subsec:finite-acceleration}

Set
\begin{equation}
  p_n:=\dot u_n,\qquad p_{-1}:=0,\qquad
  \mathcal A_M(t):=\sum_{n=0}^M2^{-n}p_n(t)^2.
  \label{eq:finite-acceleration-functional}
\end{equation}
The coordinate equations imply that each $u_n$ is smooth in time, so the following finite-dimensional calculation is justified directly.

\begin{lemma}[Finite-block acceleration identity]
  \label{lem:finite-acceleration}
  For every integer $M\geq0$ and every $0\leq t<T_{\max}$,
  \begin{equation}
    \frac12\mathcal A_M'
    +\sum_{n=0}^M2^{-n}
      \bigl(\nu N_n^{2\alpha}+N_nu_{n+1}\bigr)p_n^2
    =-2^{-M}N_Mu_Mp_Mp_{M+1}.
    \label{eq:finite-acceleration-identity}
  \end{equation}
\end{lemma}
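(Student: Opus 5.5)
The plan is a direct finite computation: differentiate the coordinate equations once in time, multiply by $2^{-n}p_n$, and sum over $0\leq n\leq M$. The weights $2^{-n}$ are chosen exactly so that the neighbouring cross terms telescope, leaving only the boundary term at $n=M$. No limit in $M$ is taken, so no tail estimates are needed at this stage.

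\emph{Step 1: time regularity and the derivative equation.} Each $u_n$ is $C^1$, and the right-hand side of \eqref{eq:dyadic-model} involves only $u_{n-1},u_n,u_{n+1}$. Bootstrapping through the equations therefore makes every coordinate $C^\infty$ in time, so $p_n=\dot u_n$ is differentiable. Differentiating \eqref{eq:dyadic-model} gives, for $n\geq0$,
\[
  \dot p_n=-\nu N_n^{2\alpha}p_n+2N_{n-1}u_{n-1}p_{n-1}
  -N_nu_{n+1}p_n-N_nu_np_{n+1}.
\]
For $n=0$ the incoming term is absent, which is consistent with the convention $p_{-1}=0$.

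\emph{Step 2: weighted multiplication and summation.} Multiplying by $2^{-n}p_n$ gives
\[
  \tfrac12\tfrac{\dd}{\dd t}\bigl(2^{-n}p_n^2\bigr)
  +2^{-n}\bigl(\nu N_n^{2\alpha}+N_nu_{n+1}\bigr)p_n^2
  =2^{-(n-1)}N_{n-1}u_{n-1}p_{n-1}p_n-2^{-n}N_nu_np_np_{n+1}.
\]
The factor $2$ in the incoming term has been absorbed into the weight: $2\cdot 2^{-n}=2^{-(n-1)}$. Setting $Q_n:=2^{-n}N_nu_np_np_{n+1}$, the right-hand side is exactly $Q_{n-1}-Q_n$, with $Q_{-1}=0$.

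\emph{Step 3: telescoping.} Summing from $n=0$ to $M$ collapses the right-hand side to $-Q_M=-2^{-M}N_Mu_Mp_Mp_{M+1}$. The left-hand side is $\frac12\mathcal A_M'$ plus the dissipative sum, which is precisely \eqref{eq:finite-acceleration-identity}. This holds for every $t<T_{\max}$, and in fact for every $t$ at which the coordinates are defined.

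There is no serious obstacle here. The only point requiring care is the bookkeeping of the weight $2^{-n}$ against the factor $2$ coming from differentiating $u_{n-1}^2$, since that matching is what produces the exact cancellation. The genuinely delicate step is deferred to the next result, where one must show that the boundary term $2^{-M}N_Mu_Mp_Mp_{M+1}$ vanishes as $M\to\infty$ on the smooth interval.
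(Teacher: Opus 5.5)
Your proof is correct and takes essentially the same route as the paper: differentiate the coordinate equations, multiply by $2^{-n}p_n$, sum over $0\leq n\leq M$, and use the fact that the factor $2$ from differentiating $u_{n-1}^2$ matches the weight ratio $2^{-(n-1)}/2^{-n}$. The paper writes this for general weights $w_n$ and notes that the shifted internal coefficients $2w_{n+1}-w_n$ vanish when $w_n=2^{-n}$, which is equivalent to your telescoping $Q_{n-1}-Q_n$.
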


\begin{proof}
  Differentiation of \eqref{eq:dyadic-model} gives
  \begin{equation}
    \dot p_n
    =-\bigl(\nu N_n^{2\alpha}+N_nu_{n+1}\bigr)p_n
      +2N_{n-1}u_{n-1}p_{n-1}-N_nu_np_{n+1},
    \label{eq:differentiated-dyadic-model}
  \end{equation}
  where the incoming term is zero when $n=0$. Multiply by $w_np_n$ and sum from $0$ to $M$. Shifting the incoming sum gives
  \begin{align*}
    \frac12\frac{\dd}{\dd t}\sum_{n=0}^Mw_np_n^2
    &+\sum_{n=0}^Mw_n\bigl(\nu N_n^{2\alpha}+N_nu_{n+1}\bigr)p_n^2\\
    &=\sum_{n=0}^{M-1}(2w_{n+1}-w_n)N_nu_np_np_{n+1}
      -w_MN_Mu_Mp_Mp_{M+1}.
  \end{align*}
  The internal sum is empty if $M=0$. With $w_n=2^{-n}$, all its coefficients vanish, proving \eqref{eq:finite-acceleration-identity}.
\end{proof}

  The cancellation condition $2w_{n+1}=w_n$ determines the positive weights uniquely up to a multiplicative constant and does not use any relation between consecutive shell scales. Non-negativity is needed when interpreting the remaining terms on the left as dissipation. Passing to infinitely many shells also requires control of the boundary term, which we verify for both configurations next.

\subsection{The weighted acceleration estimate}
\label{subsec:weighted-acceleration}

\begin{lemma}[Time-derivative decay and disappearance of the boundary term]
  \label{lem:acceleration-boundary}
  For every compact interval $J\subset[0,T_{\max})$ and every $s>0$, there is a constant $C_{J,s}<\infty$ such that
  \begin{equation}
    \sup_{t\in J}\bigl(|u_n(t)|+|p_n(t)|\bigr)
    \leq C_{J,s}N_n^{-s},\qquad n\geq0.
    \label{eq:time-derivative-decay}
  \end{equation}
  Consequently, the series
  \begin{equation}
    \mathcal A(t):=\sum_{n\geq0}2^{-n}|\dot u_n(t)|^2
    \label{eq:acceleration-functional}
  \end{equation}
  converges uniformly on $J$, and
  \begin{equation}
    \lim_{M\to\infty}\sup_{t\in J}
    \bigl|2^{-M}N_Mu_M(t)p_M(t)p_{M+1}(t)\bigr|=0.
    \label{eq:acceleration-boundary-decay}
  \end{equation}
  In particular, $\mathcal A(0)<\infty$ and $\mathcal A(t)<\infty$ for every $t<T_{\max}$.
\end{lemma}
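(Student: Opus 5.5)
The plan is to read off the decay of $u_n$ directly from smoothness, and then obtain the decay of $p_n$ by substituting into the coordinate equation \eqref{eq:dyadic-model}. Fix a compact interval $J\subset[0,T_{\max})$. Smoothness on $J$ (Definition~\ref{def:smoothness}), combined with the embedding \eqref{eq:weighted-embeddings}, gives for every $s'>0$ a constant $K_{s'}$ with
\[
  \sup_{t\in J}\sup_{n\geq0}N_n^{s'}u_n(t)\leq K_{s'}.
\]
This already yields the $u_n$ part of \eqref{eq:time-derivative-decay}.

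For $p_n=\dot u_n$, we use the equation to write
\[
  |p_n|\leq\nu N_n^{2\alpha}u_n+N_{n-1}u_{n-1}^2+N_nu_nu_{n+1}.
\]
Each term is then bounded by a negative power of $N_n$, after choosing $s'$ large relative to $s$. The first term is at most $\nu K_{s'}N_n^{2\alpha-s'}$. The third is at most $K_{s'}^2N_n^{1-s'}$. For the incoming term, the two shell settings must be treated separately.

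In the geometric case, $N_{n-1}=N_n/\Lambda$, so the incoming term is at most $K_{s'}^2\Lambda^{2s'-1}N_n^{1-2s'}$.

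In the super-exponential case, $N_{n-1}=N_n^{1/b}$, so the incoming term is at most $K_{s'}^2N_n^{(1-2s')/b}$. Taking $s'>(bs+1)/2$ makes this exponent at most $-s$. Choosing $s'$ larger than $s+\max\{2\alpha,1\}$ as well handles all three terms, and the finitely many low indices (including $n=0$) are absorbed into the constant $C_{J,s}$.

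This incoming-term step is the only place where the shell geometry matters. It is also the only step requiring care, since an unbounded ratio $N_n/N_{n-1}$ could a priori spoil the loss of one power of $N$. The point is that $u_{n-1}^2$ gains a factor $N_{n-1}^{-2s'}$, which dominates, because $N_{n-1}\geq N_n^{1/b}$ with $b<2$ (indeed any $b$ works here).

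The consequences then follow. Taking $s$ arbitrary (e.g.\ $s=1$), we get $2^{-n}p_n^2\leq C^2 2^{-n}N_n^{-2s}$, which is summable in $n$. The Weierstrass M-test therefore gives uniform convergence of $\mathcal A$ on $J$, and finiteness of $\mathcal A(0)$ and $\mathcal A(t)$ for all $t<T_{\max}$, using $J=\{0\}$ or $J=\{t\}$. For the boundary term, we estimate
\[
  2^{-M}N_Mu_Mp_Mp_{M+1}\leq C^3N_M^{1-2s}N_{M+1}^{-s}\leq C^3N_M^{1-3s},
\]
using $N_{M+1}\geq N_M$. Choosing $s>1/3$, this tends to zero uniformly on $J$ because $N_M\to\infty$, which proves \eqref{eq:acceleration-boundary-decay}.
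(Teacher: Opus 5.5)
Your proposal is correct and follows essentially the same route as the paper. In both, smoothness on $J$ gives uniform $X_N^{r}$ bounds; these are substituted into the coordinate equation, with the incoming term converted via $N_{n-1}=N_n/\Lambda$ or $N_{n-1}=N_n^{1/b}$; then a summable majorant gives uniform convergence of $\mathcal A$, and $N_{M+1}\geq N_M$ bounds the boundary term by $N_M^{1-3s}$. The only differences are cosmetic: you bound the outgoing term crudely by $N_n^{1-s'}$ rather than using the decay of $u_{n+1}$, and you take any $s>1/3$ instead of $s=1$.
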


\begin{proof}
  Smoothness gives a uniform $X_N^r$ bound on $J$ for every $r>0$. In the geometric case, substitution into \eqref{eq:dyadic-model} and use of $N_{n+1}=\Lambda N_n$ yield, for $n\geq1$,
  \[
    \sup_{t\in J}|p_n(t)|
    \leq C_{J,r}\bigl(N_n^{2\alpha-r}+N_n^{1-2r}\bigr).
  \]
  Taking $r\geq\max\{s+2\alpha,(s+1)/2\}$ gives the required decay of $p_n$.

  In the super-exponential case, $N_{n-1}=N_n^{1/b}$ and $N_{n+1}=N_n^b$, so
  \[
    \sup_{t\in J}|p_n(t)|
    \leq C_{J,r}\bigl(
       N_n^{2\alpha-r}
       +N_n^{(1-2r)/b}
       +N_n^{1-(b+1)r}\bigr),\qquad n\geq1.
  \]
  It suffices to take
  \[
    r\geq\max\left\{s+2\alpha,\frac{1+bs}{2},\frac{1+s}{b+1}\right\}.
  \]
  In both cases the first coordinate is handled separately, with its incoming term omitted, and the constant can be enlarged to include it. Together with the assumed decay of $u_n$, these estimates prove \eqref{eq:time-derivative-decay}.

  Since $N_n\geq N_0>0$, the bound
  \[
    2^{-n}\sup_{t\in J}|p_n(t)|^2
    \leq C_{J,s}^2\,2^{-n}N_n^{-2s}
  \]
  is summable in $n$. Thus the series defining $\mathcal A$ converges uniformly on $J$. Moreover, $N_{M+1}\geq N_M$ in both configurations, and hence
  \[
    \sup_{t\in J}|2^{-M}N_Mu_Mp_Mp_{M+1}|
    \leq C_{J,s}^3\,2^{-M}N_M^{1-3s}.
  \]
  Choosing $s=1$ proves \eqref{eq:acceleration-boundary-decay}.
\end{proof}

\begin{theorem}[Weighted acceleration identity]
  \label{thm:weighted-acceleration}
  For every $0\leq t_0\leq t<T_{\max}$,
  \begin{equation}
    \begin{aligned}
      \mathcal A(t)
      +2\int_{t_0}^t\sum_{n\geq0}2^{-n}
      \bigl(\nu N_n^{2\alpha}+N_nu_{n+1}(\tau)\bigr)
      |\dot u_n(\tau)|^2\,\dd\tau
      =\mathcal A(t_0).
    \end{aligned}
    \label{eq:weighted-acceleration-identity}
  \end{equation}
  In particular, $\mathcal A$ is non-increasing on $[0,T_{\max})$.
\end{theorem}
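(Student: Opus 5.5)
The plan is to integrate the finite-block identity of Lemma~\ref{lem:finite-acceleration} over $[t_0,t]$ and then let $M\to\infty$, using Lemma~\ref{lem:acceleration-boundary} on the compact interval $J:=[t_0,t]\subset[0,T_{\max})$. Since each $p_n$ is $C^1$ in time, integrating \eqref{eq:finite-acceleration-identity} gives, for every $M$,
\[
  \mathcal A_M(t)
  +2\int_{t_0}^t\sum_{n=0}^M2^{-n}\bigl(\nu N_n^{2\alpha}+N_nu_{n+1}\bigr)p_n^2\,\dd\tau
  =\mathcal A_M(t_0)-2\int_{t_0}^t2^{-M}N_Mu_Mp_Mp_{M+1}\,\dd\tau .
\]

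We then pass to the limit in each term. By Lemma~\ref{lem:acceleration-boundary}, $\mathcal A_M\to\mathcal A$ uniformly on $J$, so the endpoint values converge to $\mathcal A(t)$ and $\mathcal A(t_0)$. By \eqref{eq:acceleration-boundary-decay}, the boundary integral is bounded by $2|J|\sup_J|2^{-M}N_Mu_Mp_Mp_{M+1}|\to0$.

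For the dissipation sum, all summands are non-negative because $u_{n+1}\geq0$ by \eqref{eq:integrating-factor}. Monotone convergence therefore applies to the partial sums. Alternatively, one can show uniform convergence directly: \eqref{eq:time-derivative-decay} with $s$ large gives
\[
  2^{-n}\bigl(\nu N_n^{2\alpha}+N_nC\bigr)C^2N_n^{-2s},
\]
which is summable. Either route lets us exchange the sum and the integral, and this yields \eqref{eq:weighted-acceleration-identity}.

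Monotonicity follows at once, since the integral term is non-negative (again by non-negativity of $u_{n+1}$). Applying the identity for any $t_0\leq t$ gives $\mathcal A(t)\leq\mathcal A(t_0)$.

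The only delicate point is controlling the boundary term uniformly in time. That control is already supplied by Lemma~\ref{lem:acceleration-boundary}, so no further obstacle is expected.
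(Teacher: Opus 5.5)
Your proposal is correct and matches the paper's proof: integrate the finite-block identity over $[t_0,t]$, use Lemma~\ref{lem:acceleration-boundary} for convergence of $\mathcal A_M$ at the endpoints and for the vanishing boundary integral, then pass to the limit in the dissipation sum by monotone convergence, which relies on $u_{n+1}\geq0$. Your alternative uniform-convergence route via \eqref{eq:time-derivative-decay} is also valid, but it is not needed.
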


\begin{proof}
  The case $t=t_0$ is immediate. For $t_0<t<T_{\max}$, integrate \eqref{eq:finite-acceleration-identity} over $[t_0,t]$ to obtain
  \begin{align*}
    \frac12\mathcal A_M(t)
    &+\int_{t_0}^t\sum_{n=0}^M2^{-n}
      \bigl(\nu N_n^{2\alpha}+N_nu_{n+1}(\tau)\bigr)p_n(\tau)^2\,\dd\tau\\
    &=\frac12\mathcal A_M(t_0)
      -\int_{t_0}^t2^{-M}N_Mu_M(\tau)p_M(\tau)p_{M+1}(\tau)\,\dd\tau.
  \end{align*}
  Lemma~\ref{lem:acceleration-boundary} makes the boundary integral tend to zero and gives convergence of $\mathcal A_M$ at both endpoints. Since $u_{n+1}\geq0$, the integrands on the left are non-negative and increase with $M$. Monotone convergence therefore gives \eqref{eq:weighted-acceleration-identity}. The dissipation integral is non-negative, proving monotonicity.
\end{proof}

The identity is an exact balance law for weighted acceleration. Here $\mathcal A(t)$ is the instantaneous weighted square of the shell accelerations $\dot u_n(t)$. In the integral, $\nu N_n^{2\alpha}|\dot u_n|^2$ is the viscous dissipation of acceleration at shell $n$, while $N_nu_{n+1}|\dot u_n|^2$ is the additional nonlinear damping induced by the coupling to the next shell; the latter is non-negative because $u_{n+1}\geq0$. And the whole integral measures the dissipation accumulated from $t_0$ to $t$. 

The non-negativity of the dissipation in \eqref{eq:weighted-acceleration-identity} is crucial for our regularity proofs. In the picture of energy transfer towards higher shells, this identity shows that the total weighted acceleration energy is dissipated in time. Intuitively, this dissipation counteracts the ``blow-up of acceleration''  at high shells and provides a mechanism for preventing singularity formation. Combined with the shell-dependent estimates below, it is a key ingredient in proving global regularity in the stated parameter regimes. We also seek an analogous mechanism in the Navier--Stokes equations.

\begin{corollary}[Pointwise acceleration estimate]
  \label{cor:pointwise-acceleration}
  For every $n\geq0$ and every $0\leq t<T_{\max}$,
  \begin{equation}
    |\dot u_n(t)|\leq2^{n/2}\sqrt{\mathcal A(0)}.
    \label{eq:pointwise-acceleration}
  \end{equation}
\end{corollary}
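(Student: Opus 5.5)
This bound follows directly from Theorem~\ref{thm:weighted-acceleration}. We apply the identity \eqref{eq:weighted-acceleration-identity} with $t_0=0$ and an arbitrary $t\in[0,T_{\max})$. By Lemma~\ref{lem:acceleration-boundary}, both $\mathcal A(0)$ and $\mathcal A(t)$ are finite. The dissipation integral on the left is non-negative for two reasons: $\nu>0$, and $u_{n+1}(\tau)\geq0$, since non-negativity is preserved by \eqref{eq:integrating-factor}. Discarding this integral gives $\mathcal A(t)\leq\mathcal A(0)$.

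Next we drop all but one term of the series defining $\mathcal A(t)$. Every summand is non-negative, so for each fixed $n\geq0$,
\[
  2^{-n}|\dot u_n(t)|^2\leq\mathcal A(t)\leq\mathcal A(0).
\]
Multiplying by $2^n$ and taking square roots gives \eqref{eq:pointwise-acceleration}.

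No step here is a genuine obstacle. The only point to check is that the identity is available at every $t<T_{\max}$ and not just on compact subintervals. This holds because any such $t$ lies in the compact interval $[0,t]\subset[0,T_{\max})$, where Lemma~\ref{lem:acceleration-boundary} and Theorem~\ref{thm:weighted-acceleration} apply. The resulting constant $\sqrt{\mathcal A(0)}$ depends only on the initial datum: by \eqref{eq:dyadic-model} at $t=0$, it is computed from $h$ and the fixed parameters. In particular, the bound does not degenerate as $t\uparrow T_{\max}$.
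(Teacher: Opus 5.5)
Your proof is correct and matches the paper's argument. The paper likewise takes $\mathcal A(t)\leq\mathcal A(0)$ from the monotonicity in Theorem~\ref{thm:weighted-acceleration} and bounds the single term $2^{-n}|\dot u_n(t)|^2$ by $\mathcal A(t)$; your remarks on the finiteness of $\mathcal A(0)$ and on applying the identity at every $t<T_{\max}$ are accurate and already covered by Lemma~\ref{lem:acceleration-boundary} and that theorem.
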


\begin{proof}
  Each term satisfies
  $2^{-n}|\dot u_n(t)|^2\leq\mathcal A(t)\leq\mathcal A(0)$.
\end{proof}

\begin{remark}
  The decay estimates used to remove the boundary term are local to compact smooth time intervals, and their constants may depend on the interval. In contrast, the right-hand side of \eqref{eq:pointwise-acceleration} depends only on the initial datum and the fixed parameters. The estimate therefore holds uniformly throughout $[0,T_{\max})$, even when $T_{\max}<\infty$. No acceleration identity at $T_{\max}$ is needed.
\end{remark}

\subsection{An abstract backward propagation lemma}
\label{subsec:backward-propagation}

The next lemma is a statement about non-negative sequences with a parameter. It excludes a large value at a sufficiently high index by propagating lower bounds towards a fixed index. Allowing the linear damping coefficient to vary will be useful for the super-exponential model.

\begin{lemma}[Backward propagation with variable damping]
  \label{lem:backward-propagation}
  Let $A,B,d_*>0$ with $B\geq A$, and let $I$ be a non-empty parameter set. Suppose that $x_n(t)\geq0$ for $n\geq0$, and that real-valued functions $d_n(t)$ and $r_n(t)$ satisfy
  \begin{equation}
    A x_{n-1}(t)^2
    =d_n(t)x_n(t)+B x_n(t)x_{n+1}(t)+r_n(t),
    \qquad n\geq1,\quad t\in I,
    \label{eq:backward-recurrence}
  \end{equation}
  with $d_n(t)\geq d_*$ for every $n\geq1$ and $t\in I$. Assume also that
  \begin{equation}
    \lim_{n\to\infty}\sup_{t\in I}|r_n(t)|=0,
    \qquad
    \sup_{t\in I}x_k(t)<\infty\quad\text{for every fixed }k\geq0.
    \label{eq:backward-hypotheses}
  \end{equation}
  Then
  \begin{equation}
    \lim_{n\to\infty}\sup_{t\in I}x_n(t)=0.
    \label{eq:backward-tail-conclusion}
  \end{equation}
\end{lemma}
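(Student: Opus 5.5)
We argue by contradiction. If \eqref{eq:backward-tail-conclusion} fails, there are $\varepsilon>0$, indices $n_j\to\infty$ and times $t_j\in I$ with $x_{n_j}(t_j)\geq\varepsilon$. The plan is to propagate this lower bound down from index $n_j$ to a fixed low index $k$, and to show that some quantity built from $x_k(t_j),x_{k+1}(t_j)$ must become unbounded as $j\to\infty$. That contradicts the second hypothesis in \eqref{eq:backward-hypotheses}. Throughout, we write the recurrence at index $m+1$ and drop the time variable $t=t_j$:
\[
  A x_m^2=d_{m+1}x_{m+1}+Bx_{m+1}x_{m+2}+r_{m+1}.
\]
Set $\eta_n:=\sup_{k\geq n}\sup_{t\in I}|r_k(t)|$, so that $\eta_n\to0$. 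Using $B\geq A$, $d_{m+1}\geq d_*$ and $x\geq0$, we obtain
\begin{equation*}
  x_m^2\geq x_{m+1}\bigl(x_{m+2}+d_*/A\bigr)-\eta_{m+1}/A .
\end{equation*}
This is the only place where $B\geq A$ enters.

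\emph{Step 1: a floor.} Applying the recurrence at $n_j$ gives $Ax_{n_j-1}^2\geq d_*\varepsilon-\eta_{n_j}$. Hence for large $j$ both $x_{n_j}$ and $x_{n_j-1}$ are at least $\mu:=\min\{\varepsilon,\sqrt{d_*\varepsilon/(2A)}\}$. Fix a low index $k$ large enough that $\eta_k\leq d_*\mu/(2A)\cdot\mu$; we may take $k$ as large as needed, since $\sup_t x_k(t)<\infty$ for each fixed $k$. Put $\kappa:=d_*/(2A)$. Then whenever $x_{m+1}\geq\mu$ and $m+1\geq k$,
\[
  x_m^2\geq x_{m+1}(x_{m+2}+\kappa).
\]
If in addition $x_{m+2}\geq\mu$, this gives $x_m\geq\mu$. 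By downward induction from the pair $(n_j-1,n_j)$, we get $x_m\geq\mu$ for all $k\leq m\leq n_j$, and the displayed inequality holds along the whole block.

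\emph{Step 2: a monotone potential.} With $a_m:=\log x_m$, define
\[
  \Phi_m:=2a_m+a_{m+1}.
\]
Taking logarithms in the inequality from Step 1 gives $2a_m\geq a_{m+1}+a_{m+2}+\log(1+\kappa/x_{m+2})$, and therefore
\[
  \Phi_m\geq\Phi_{m+1}+\log\bigl(1+\kappa/x_{m+2}\bigr),
\]
so $\Phi$ increases going downward. The starting value satisfies $\Phi_{n_j-2}\geq3\log\mu$. Also $\Phi_k\leq\Phi^*:=3\log\max\{1,\sup_t x_k,\sup_t x_{k+1}\}<\infty$ by hypothesis.

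\emph{Step 3: the main obstacle, lower-bounding the gains.} A single gain can be tiny if $x_{m+2}$ is huge, so we need a dichotomy. Choose $K$ with $2\log\mu+\log K>\Phi^*$. If some $m\in[k,n_j-2]$ has $x_{m+2}>K$, then by monotonicity $\Phi_k\geq\Phi_{m+1}\geq2\log\mu+\log K>\Phi^*$, which is a contradiction. Otherwise every gain is at least $\log(1+\kappa/K)$, and
\[
  \Phi_k\geq3\log\mu+(n_j-2-k)\log(1+\kappa/K),
\]
which exceeds $\Phi^*$ once $n_j$ is large. This is again a contradiction. The delicate points are exactly this bookkeeping: the floor $\mu$ and the constants $k,K$ must be fixed before $j$ is sent to infinity, and the error $\eta$ must be absorbed only on indices $\geq k$.
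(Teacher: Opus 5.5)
Your proof is correct. It differs from the paper's mainly in how growth along the block is forced.

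\textbf{The paper's route.} The paper works directly with a fixed $\varepsilon$. It takes $\eta=\min\{\varepsilon,d_*/(4A)\}$ and an index $N$ beyond which $|r_n|\le d_*\eta/4$. It observes that $x_{n-1}\le\eta$ forces $x_n\le\eta/2$, so a value $x_m>\eta$ forces $x_j>\eta$ on all of $N-1\le j\le m$; this plays the role of your Step~1. It then compares with the explicit sequence $L_0=L_1=\eta$, $L_{k+1}=\sqrt{L_k(L_{k-1}+\delta)}$. It shows $L_k\uparrow\infty$ by tracking the increments $\Delta_k=L_k-L_{k-1}\in[0,\delta)$, and concludes $x_{N-1}(t)\ge L_{m-N+1}\to\infty$.

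\textbf{Your route.} Your potential $\Phi_m=2\log x_m+\log x_{m+1}$ is the exact invariant of the homogeneous log-recurrence $2a_m=a_{m+1}+a_{m+2}$. It therefore isolates the dissipative gain $\log(1+\kappa/x_{m+2})$ step by step, and it bounds explicitly how long a block above the floor can be. The price is the dichotomy on whether some $x_{m+2}$ exceeds $K$. The paper avoids that case split because its comparison is monotone, so large values of $x$ can only help. Both arguments use $B\geq A$ at the same single point, to reach $x_m^2\ge x_{m+1}(x_{m+2}+\mathrm{const})$.

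\textbf{A small slip.} To absorb $\eta_{m+1}/A$ into $\kappa x_{m+1}$ when $x_{m+1}\ge\mu$, you need $\eta_k\le d_*\mu/2$. Your stated threshold $d_*\mu^2/(2A)$ only suffices when $\mu\le A$. Since $k$ may be taken as large as you like, this is harmless.
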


\begin{proof}
  Fix $\varepsilon>0$ and put
  \[
    \eta:=\min\left\{\varepsilon,\frac{d_*}{4A}\right\}.
  \]
  Choose an integer $N\geq1$ such that
  \begin{equation}
    \sup_{t\in I}|r_n(t)|\leq\frac{d_*\eta}{4},\qquad \forall n\geq N.
    \label{eq:backward-small-error}
  \end{equation}
  If $n\geq N$ and $x_{n-1}(t)\leq\eta$, then non-negativity and \eqref{eq:backward-recurrence} imply
  \[
    d_*x_n(t)
    \leq d_n(t)x_n(t)
    \leq A\eta^2+|r_n(t)|
    \leq\frac{d_*\eta}{2}.
  \]
  Thus $x_{n-1}(t)\leq\eta$ forces $x_n(t)\leq\eta/2$. By contraposition and iteration, a value $x_m(t)>\eta$ with $m\geq N+1$ forces
  \begin{equation}
    x_j(t)>\eta,\qquad N-1\leq j\leq m.
    \label{eq:backward-positive-block}
  \end{equation}

  Fix such $m,t$ and suppress the parameter. For $N\leq j\leq m-1$, the recurrence and \eqref{eq:backward-small-error} give
  \[
    A x_{j-1}^2
    \geq d_*x_j+B x_jx_{j+1}-\frac{d_*\eta}{4}
    \geq\frac{3d_*}{4}x_j+B x_jx_{j+1}.
  \]
  Since $B\geq A$, it follows that
  \begin{equation}
    x_{j-1}^2\geq x_j(x_{j+1}+\delta),
    \qquad N\leq j\leq m-1,
    \qquad \delta:=\frac{d_*}{2A}>0.
    \label{eq:backward-comparison-inequality}
  \end{equation}

  Define a comparison sequence by
  \begin{equation}
    L_0=L_1=\eta,\qquad
    L_{k+1}:=\sqrt{L_k(L_{k-1}+\delta)},\qquad k\geq1.
    \label{eq:backward-comparison-sequence}
  \end{equation}
  The last two coordinates in \eqref{eq:backward-positive-block} give
  $x_m\geq L_0$ and $x_{m-1}\geq L_1$. If
  $x_{m-k}\geq L_k$ and $x_{m-k+1}\geq L_{k-1}$, then \eqref{eq:backward-comparison-inequality} gives $x_{m-k-1}\geq L_{k+1}$ whenever $1\leq k\leq m-N$. Consequently,
  \begin{equation}
    x_{m-k}(t)\geq L_k,\qquad 0\leq k\leq m-N+1.
    \label{eq:backward-propagated-bound}
  \end{equation}

  We claim that $L_k\to\infty$. Set $\Delta_k:=L_k-L_{k-1}$ for $k\geq1$. The defining recurrence yields
  \[
    \Delta_{k+1}(L_{k+1}+L_k)=L_k(\delta-\Delta_k).
  \]
  Since $\Delta_1=0$, induction gives $0\leq\Delta_k<\delta$. Indeed, under this hypothesis the right-hand side is positive, so $L_{k+1}\geq L_k$ and
  \[
    0\leq\Delta_{k+1}
    \leq\frac{\delta-\Delta_k}{2}<\delta.
  \]
  Thus $(L_k)$ is non-decreasing. If it were bounded, it would converge to a number $\ell\geq\eta>0$, and \eqref{eq:backward-comparison-sequence} would imply
  $\ell^2=\ell(\ell+\delta)$, which is impossible. This proves the claim.

  Let $K:=\sup_{t\in I}x_{N-1}(t)<\infty$. Choose $m_0\geq N+1$ so large that $L_{m-N+1}>K$ for every $m\geq m_0$. If $x_m(t)>\eta$ for any such $m$ and any $t\in I$, then \eqref{eq:backward-propagated-bound} gives
  \[
    x_{N-1}(t)\geq L_{m-N+1}>K,
  \]
  a contradiction. Therefore $\sup_{t\in I}x_m(t)\leq\eta\leq\varepsilon$ for all $m\geq m_0$. Since $\varepsilon>0$ was arbitrary, \eqref{eq:backward-tail-conclusion} follows.
\end{proof}

\begin{remark}
  No topology or measure on $I$, and no regularity with respect to the parameter, is required. The proof uses only the uniform lower bound $d_n(t)\geq d_*$; no upper bound on the damping coefficients is needed. It also allows $B=A$, which is important for the endpoint applications. 
\end{remark}

\subsection{Vanishing defects}
\label{subsec:vanishing-defects}

In the applications, the error $r(t)$ in Lemma \ref{lem:backward-propagation} is a weighted time derivative. We record the direct consequence of Corollary~\ref{cor:pointwise-acceleration} that verifies its uniform decay.

\begin{lemma}[Vanishing rescaled time derivatives]
  \label{lem:vanishing-defect}
  Let $(\omega_n)_{n\geq0}$ be a real sequence such that
  \[
    \lim_{n\to\infty}2^{n/2}|\omega_n|=0.
  \]
  Set $R_n(t):=\omega_n\dot u_n(t)$. Then
  \begin{equation}
    \lim_{n\to\infty}\sup_{0\leq t<T_{\max}}|R_n(t)|=0.
    \label{eq:vanishing-defect}
  \end{equation}
\end{lemma}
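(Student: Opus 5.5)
The plan is to combine Corollary~\ref{cor:pointwise-acceleration} with the hypothesis on $(\omega_n)$. The corollary bounds $|\dot u_n(t)|$ uniformly in time on the whole maximal smooth interval, including when $T_{\max}<\infty$. By Lemma~\ref{lem:acceleration-boundary}, the quantity $\mathcal A(0)$ is a finite constant determined by the datum $h$ and the fixed parameters. Hence, for every $n\geq0$ and every $0\leq t<T_{\max}$,
\[
  |R_n(t)|=|\omega_n|\,|\dot u_n(t)|
  \leq 2^{n/2}|\omega_n|\sqrt{\mathcal A(0)}.
\]

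The right-hand side does not depend on $t$. Taking the supremum over $t\in[0,T_{\max})$ therefore gives
\[
  \sup_{0\leq t<T_{\max}}|R_n(t)|\leq 2^{n/2}|\omega_n|\sqrt{\mathcal A(0)}.
\]
Letting $n\to\infty$ and using $2^{n/2}|\omega_n|\to0$ yields \eqref{eq:vanishing-defect}. If $\mathcal A(0)=0$, the bound is identically zero and the conclusion is immediate.

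There is essentially no obstacle. The only point to check is that the bound in \eqref{eq:pointwise-acceleration} is uniform on all of $[0,T_{\max})$, not merely on compact subintervals. This is exactly the content of the remark after Corollary~\ref{cor:pointwise-acceleration}: the acceleration functional is non-increasing by Theorem~\ref{thm:weighted-acceleration}, so its value at time zero controls every later time.
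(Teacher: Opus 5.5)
Your proof is correct and follows the paper's own argument: you apply the pointwise acceleration bound \eqref{eq:pointwise-acceleration} to get $\sup_{0\leq t<T_{\max}}|R_n(t)|\leq\sqrt{\mathcal A(0)}\,2^{n/2}|\omega_n|$, and then let $n\to\infty$. You also rightly note that the bound is uniform on all of $[0,T_{\max})$ because $\mathcal A$ is non-increasing and $\mathcal A(0)<\infty$.
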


\begin{proof}
  The pointwise acceleration estimate gives
  \[
    \sup_{0\leq t<T_{\max}}|R_n(t)|
    \leq\sqrt{\mathcal A(0)}\,2^{n/2}|\omega_n|,
  \]
  whose right-hand side tends to zero.
\end{proof}

The condition in this lemma takes a different form in the two shell configurations. For super-exponential shells and every $\gamma>0$,
\begin{equation}
  2^{n/2}N_n^{-\gamma}
  =\exp\left(\frac n2\log2-\gamma b^n\log N_0\right)
  \longrightarrow0,
  \label{eq:superexp-defect-decay}
\end{equation}
because $b^n/n\to\infty$. For geometric shells, define
\begin{equation}
  \rho:=\frac{\log2}{2\log\Lambda}>0.
  \label{eq:geometric-acceleration-loss}
\end{equation}
Then
\begin{equation}
  2^{n/2}N_n^{-\gamma}
  =N_0^{-\rho}N_n^{\rho-\gamma},
  \label{eq:geometric-defect-decay}
\end{equation}
which tends to zero if and only if $\gamma>\rho$. At $\gamma=\rho$, this numerical factor is constant, so the pointwise acceleration estimate alone does not imply decay of the corresponding defect.

Once a rescaling puts the equations in the form \eqref{eq:backward-recurrence}, these observations provide the small-error hypothesis of Lemma~\ref{lem:backward-propagation}. The fixed-coordinate bounds in lemma \ref{lem:backward-propagation} come from the energy estimate: if $x_k(t)=c_ku_k(t)$ with fixed positive coefficients $c_k$, then
\[
  \sup_{0\leq t<T_{\max}}x_k(t)
  \leq c_k\|h\|_{\ell^2}<\infty
\]
for each fixed $k$. The remaining tasks are to verify the recurrence coefficients and to turn the resulting uniform tail decay into smoothness. These steps depend on the shell configuration and will be carried out in the respective regularity proofs.

%% file: sections/4-superexp-blow-up.tex
\section{Finite-Time Blow-up for Super-Exponential Shells}
\label{sec:superexp-blowup}

We prove finite-time loss of regularity below the critical dissipation exponent by a weighted Lyapunov argument, following the strategy of \cite{Cheskidov2008}. The choice of weights uses the super-exponential separation of the shells. The proof relies on the energy solution theory in Section~\ref{sec:preliminaries} and does not require the acceleration estimate established in Section~\ref{sec:acceleration}. 

\subsection{The blow-up theorem and proof strategy}
\label{subsec:superexp-blowup-statement}

Throughout this section, we consider \eqref{eq:dyadic-model} with
\[
  N_n=N_0^{b^n},\qquad N_0>1,\quad 1<b<2,\quad \nu>0,
\]
and write
\begin{equation}
  \alpha_c:=\frac{1}{b+2}.
  \label{eq:superexp-critical-exponent}
\end{equation}

\begin{theorem}[Finite-time loss of regularity]
  \label{thm:superexp-blowup}
  Let $0\leq\alpha<\alpha_c$. For every $s>\alpha_c$, there exist a non-negative finitely supported initial datum $h$ and a time $T_R<\infty$ such that every global non-negative finite-energy solution of \eqref{eq:dyadic-model} with initial datum $h$ satisfies
  \begin{equation}
    \sup_{0\leq t\leq T_R}\|u(t)\|_{H_N^s}=\infty.
    \label{eq:superexp-loss-of-regularity}
  \end{equation}
  In particular, the corresponding smooth solution has maximal smooth existence time $T_{\max}\leq T_R<\infty$. The initial datum and $T_R$ may depend on $s$ and on the fixed model parameters.
\end{theorem}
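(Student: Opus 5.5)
We argue by contradiction with a weighted Lyapunov functional in the spirit of \cite{Cheskidov2008}. Fix $s>\alpha_c$ and let $u$ be a global non-negative finite-energy solution with finitely supported datum $h$. Suppose that
\[
  \sup_{0\leq t\leq T_R}\|u(t)\|_{H_N^s}\leq C<\infty .
\]
Consider the linear functional
\[
  E(t):=\sum_{n\geq0}N_n^{\gamma}u_n(t)
\]
with an exponent $\gamma$ to be chosen in the window $\alpha<\gamma$, $\gamma<s$ (any $\gamma$ slightly above $\alpha$ and below $\alpha_c$ should work, after possibly lowering $s$ toward $\alpha_c$, since $H_N^s$ bounds are monotone in $s$). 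By \eqref{eq:weighted-embeddings} and \eqref{eq:superexp-summability}, the bound assumed above makes $E$ finite and absolutely convergent on $[0,T_R]$, with $E(t)\leq C'$.

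Next we compute $\dot E$ from \eqref{eq:dyadic-model}. Shifting the incoming sum (the truncation boundary vanishes by the $H_N^s$ decay, exactly as in Proposition~\ref{prop:superexp-energy-balance}) gives
\[
  \dot E=-\nu\sum_n N_n^{\gamma+2\alpha}u_n+\sum_n N_n\bigl(N_{n+1}^\gamma u_n^2-N_n^\gamma u_nu_{n+1}\bigr).
\]
Since $N_{n+1}^\gamma=N_n^{b\gamma}$ is much larger than $N_n^{\gamma}$, the outgoing term is absorbed into the transfer term by Young's inequality:
\[
  N_n^{1+\gamma}u_nu_{n+1}\leq \tfrac12N_n^{1+b\gamma}u_n^2+\tfrac12N_n^{1+(2-b)\gamma}u_{n+1}^2 .
\]
The second piece is controlled by the next shell's gain term $\tfrac12N_{n+1}^{1+b\gamma}u_{n+1}^2=\tfrac12N_n^{b+b^2\gamma}u_{n+1}^2$, which dominates because $b>1$. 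Hence
\[
  \dot E\geq c\sum_nN_n^{1+b\gamma}u_n^2-\nu\sum_nN_n^{\gamma+2\alpha}u_n,
\]
up to finitely many low shells. To reach a Riccati bound we compare $\sum_n N_n^{1+b\gamma}u_n^2$ with $E^2$ via a weighted Cauchy--Schwarz inequality:
\[
  E^2\leq\Bigl(\sum_n N_n^{1+b\gamma}u_n^2\Bigr)\Bigl(\sum_n N_n^{2\gamma-1-b\gamma}\Bigr).
\]
The second sum is finite, by \eqref{eq:superexp-summability}, provided $(2-b)\gamma<1$, which holds. The dissipation term is handled by splitting shells. Where $N_n^{1+b\gamma}u_n\geq 2\nu N_n^{\gamma+2\alpha}/c$, it is absorbed into the gain. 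Otherwise $u_n\lesssim N_n^{2\alpha-1-(b-1)\gamma}$, so its contribution is bounded by $C\sum_n N_n^{\gamma+4\alpha-1-(b-1)\gamma}$. This series is finite when $(2-b)\gamma+4\alpha<1$, which again follows from $\gamma\approx\alpha<\alpha_c=1/(b+2)$ (note $(2-b)\alpha+4\alpha<(b+2)\alpha_c\cdot$ suitable margin; this exponent bookkeeping is where the threshold $1/(b+2)$ must emerge). We thus obtain
\[
  \dot E\geq c_1E^2-c_2 .
\]

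Finally, choose $h$ supported on a single shell $n_0$ with a large amplitude, so that $E(0)>2\sqrt{c_2/c_1}$. The Riccati inequality then forces $E$ to become infinite before a time $T_R\sim (c_1E(0))^{-1}$, contradicting $E\leq C'$. This gives \eqref{eq:superexp-loss-of-regularity}, and $T_{\max}\leq T_R$ follows from Proposition~\ref{prop:superexp-continuation} together with weak--strong uniqueness. The main obstacle is the exponent bookkeeping of the previous paragraph. One must pick a single $\gamma$ for which three requirements hold simultaneously: the Young absorption of the outgoing flux, the summability in the Cauchy--Schwarz step, and the control of the viscous loss. This is precisely where $\alpha<1/(b+2)$ has to enter. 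If a pure power weight fails, I would instead try weights $N_n^{\gamma}\theta^n$, or start the sum at the initially excited shell $n_0$ and use that $N_{n_0}$ is large to make the constants favorable.
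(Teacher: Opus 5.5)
There is a genuine gap, and it sits in the exponent bookkeeping you flag: no growing weight $N_n^{\gamma}$ with $\gamma\geq0$ can work.

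\textbf{The viscous remainder.} Yours is $\sum_nN_n^{(2-b)\gamma+4\alpha-1}$, which is finite only if $(2-b)\gamma<1-4\alpha$. For $\gamma$ slightly above $\alpha$ this forces $(6-b)\alpha<1$. Since $1/(6-b)<1/4<1/(b+2)$ when $b<2$, the whole range $1/(6-b)\leq\alpha<\alpha_c$ is lost, and for $\alpha\geq1/4$ no $\gamma\geq0$ works at all. This is not an artifact of your splitting. For each $\varepsilon>0$ one has $\sup_{x\geq0}\bigl(\nu N_n^{\gamma+2\alpha}x-\varepsilon N_n^{1+b\gamma}x^2\bigr)=\frac{\nu^2}{4\varepsilon}N_n^{(2-b)\gamma+4\alpha-1}$. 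So when that series diverges, no bound $\nu\sum_nN_n^{\gamma+2\alpha}u_n\leq\varepsilon\sum_nN_n^{1+b\gamma}u_n^2+c_2$ with finite $c_2$ can hold over non-negative finitely supported sequences.

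\textbf{The quadratic sum.} Independently, you need $\sum_nN_n^{1+b\gamma}u_n^2$ to be finite and uniformly convergent, both to differentiate $E$ termwise and to run Cauchy--Schwarz. The assumed $H_N^s$ bound controls it only if $2s>1+b\gamma$. For $\gamma\geq0$ this needs $s>1/2$, whereas the theorem must cover every $s>1/(b+2)$, including $s\leq1/2$.

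\textbf{Your fallbacks.} Neither helps. For super-exponential shells $\theta^n=N_n^{o(1)}$, so such factors cannot move any power condition. Starting the sum at $n_0$ changes only finitely many terms.

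The missing idea is a \emph{decaying} weight $w_n=N_n^{-r}$ with $r>0$. The gain and outflow coefficients become $a_n=N_n^{1-br}$ and $c_n=N_n^{1-r}$, and absorbing the outflow is no longer automatic. Young's inequality needs $c_n^2/(a_na_{n+1})=N_n^{(b-1)((b+2)r-1)}\to0$, i.e.\ $r<1/(b+2)$. The viscous remainder is summable iff $r>(4\alpha-1)/(2-b)$. The $H_N^s$ control of $\sum_na_nu_n^2$ needs $r>(1-2s)/b$.

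The window $\max\{0,(4\alpha-1)/(2-b),(1-2s)/b\}<r<1/(b+2)$ is non-empty precisely because $\alpha<1/(b+2)<s$. So the threshold comes from the compatibility of outflow absorption with viscous summability, not from comparing $\gamma$ with $\alpha$.

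To handle low shells, start the sum at an index $n_*$ beyond which $c_n^2/(a_na_{n+1})\leq1/4$. Then discard the non-negative incoming term $w_{n_*}N_{n_*-1}u_{n_*-1}^2$ and place the large datum at shell $n_*$. Since $r>0$ gives $w\in\ell^2$, the energy inequality bounds the functional uniformly; your bound via the assumed $H_N^s$ norm would also suffice for the final contradiction. With this weight, the rest of your outline goes through: the Riccati inequality, the single-shell datum, and $T_{\max}\leq T_R$ via weak--strong uniqueness.
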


The proof uses a linear functional $H$ and a non-negative quadratic functional $P$. Under a uniform $H_N^s$ bound, suitable weights allow the nonlinear outflow and viscous terms in $H'$ to be absorbed into $P$, up to a finite constant. Weighted Cauchy--Schwarz then yields a Riccati lower bound for $H$. For a sufficiently large initial datum supported on one shell, this lower bound contradicts the uniform upper bound on $H$ supplied by the energy inequality. Thus the assumed $H_N^s$ bound cannot exist. All estimates below allow $\alpha=0$.

\subsection{Choice of the Lyapunov weights}
\label{subsec:superexp-lyapunov-weights}

\begin{lemma}[Choice of weights]
  \label{lem:superexp-lyapunov-weights}
  Let $0\leq\alpha<\alpha_c$ and $s>\alpha_c$. There exists $r$ such that
  \begin{equation}
    \max\left\{0,\frac{4\alpha-1}{2-b},\frac{1-2s}{b}\right\}
    <r<\frac{1}{b+2}.
    \label{eq:superexp-lyapunov-weight-range}
  \end{equation}
  Define
  \begin{equation}
    w_n:=N_n^{-r},\qquad
    a_n:=N_nw_{n+1},\qquad
    c_n:=N_nw_n,\qquad
    d_n:=N_n^{2\alpha}w_n.
    \label{eq:superexp-lyapunov-weights}
  \end{equation}
  There is an integer $n_*\geq1$ such that
  \begin{equation}
    \frac{c_n^2}{a_na_{n+1}}\leq\frac14,\qquad n\geq n_*.
    \label{eq:superexp-outflow-weight-bound}
  \end{equation}
  Moreover, $1-br-2s<0$, and the constants
  \begin{equation}
    C_\nu:=\nu^2\sum_{n=n_*}^{\infty}\frac{d_n^2}{a_n},
    \qquad
    S_r:=\sum_{n=n_*}^{\infty}\frac{w_n^2}{a_n}
    \label{eq:superexp-lyapunov-constants}
  \end{equation}
  are finite and strictly positive.
\end{lemma}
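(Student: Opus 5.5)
We first check that the interval in \eqref{eq:superexp-lyapunov-weight-range} is non-empty, comparing each of the three lower bounds with $1/(b+2)$. Trivially $0<1/(b+2)$. Since $b<2$, the inequality $(4\alpha-1)/(2-b)<1/(b+2)$ is equivalent to
\[
  (4\alpha-1)(b+2)<2-b,
  \qquad\text{that is,}\qquad
  4\alpha(b+2)<4,
\]
which is exactly $\alpha<\alpha_c$. Finally, $(1-2s)/b<1/(b+2)$ is equivalent to $(1-2s)(b+2)<b$, that is, $2<2s(b+2)$, which is $s>\alpha_c$. So any $r$ strictly between the maximum and $1/(b+2)$ works. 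The same lower bound $r>(1-2s)/b$ rewrites directly as $1-br-2s<0$.

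Next we compute the ratio in \eqref{eq:superexp-outflow-weight-bound} using $N_{n+1}=N_n^b$ and $N_{n+2}=N_n^{b^2}$. The definitions give
\[
  a_n=N_n^{1-br},\qquad
  a_{n+1}=N_n^{b(1-br)},\qquad
  c_n=N_n^{1-r}.
\]
Hence
\[
  \frac{c_n^2}{a_na_{n+1}}=N_n^{e},\qquad
  e=2-2r-(1+b)(1-br)=b(1+b)r-2r+1-b.
\]
We need $e<0$. Since $b(1+b)-2=(b-1)(b+2)>0$, the exponent $e$ is increasing in $r$. At $r=1/(b+2)$ it equals $(b-1)+1-b=0$, so $e<0$ for every $r<1/(b+2)$. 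Because $N_n\to\infty$, there is $n_*\ge1$ with $N_n^{e}\le1/4$ for all $n\ge n_*$.

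For the constants in \eqref{eq:superexp-lyapunov-constants}, we have
\[
  \frac{d_n^2}{a_n}=N_n^{4\alpha-2r-1+br},\qquad
  \frac{w_n^2}{a_n}=N_n^{-2r-1+br}.
\]
The first exponent is negative exactly when $r(2-b)>4\alpha-1$, which is guaranteed by $r>(4\alpha-1)/(2-b)$. The second exponent is $-1-(2-b)r<0$ because $r>0$ and $b<2$. Both series are therefore of the form $\sum N_n^{-c}$ with $c>0$, which converge by \eqref{eq:superexp-summability}. They are strictly positive since every term is positive and $\nu>0$.

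The only step requiring care is the sign analysis of the exponent $e$: one must observe that it vanishes exactly at the critical value $r=1/(b+2)$ and is monotone in $r$. The remaining steps are direct algebraic rewrites of the three constraints defining the range of $r$.
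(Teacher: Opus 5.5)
Your proposal is correct and follows the paper's proof essentially step for step: the same non-emptiness check (you spell out the equivalences with $\alpha<\alpha_c$ and $s>\alpha_c$ that the paper only asserts), the same exponent computation, and the same summability argument via \eqref{eq:superexp-summability}. Your exponent $e=b(1+b)r-2r+1-b$ is exactly the paper's factored form $(b-1)\bigl((b+2)r-1\bigr)$, so your monotonicity-in-$r$ argument and the paper's factorization give the same sign conclusion.
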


\begin{proof}
  Since $1<b<2$, the hypotheses on $\alpha$ and $s$ imply
  \[
    \frac{4\alpha-1}{2-b}<\frac{1}{b+2},
    \qquad
    \frac{1-2s}{b}<\frac{1}{b+2}.
  \]
  Together with $1/(b+2)>0$, these inequalities show that the interval in \eqref{eq:superexp-lyapunov-weight-range} is non-empty. The last lower bound on $r$ gives $1-br-2s<0$.

  Using $N_{n+1}=N_n^b$, we have $a_n=N_n^{1-br}$ and $c_n=N_n^{1-r}$. Hence
  \begin{align}
    \frac{c_n^2}{a_na_{n+1}}
    &=N_n^{2-2r-(1-br)-(b-b^2r)}\notag\\
    &=N_n^{(b-1)((b+2)r-1)}\longrightarrow0,
    \label{eq:superexp-outflow-ratio}
  \end{align}
  since $r<1/(b+2)$. Choosing $n_*\geq1$ sufficiently large gives \eqref{eq:superexp-outflow-weight-bound}.

  The other two ratios are
  \begin{equation}
    \frac{d_n^2}{a_n}=N_n^{4\alpha-1+(b-2)r},
    \qquad
    \frac{w_n^2}{a_n}=N_n^{-1+(b-2)r}.
    \label{eq:superexp-summable-weight-ratios}
  \end{equation}
  The first exponent is negative because $r>(4\alpha-1)/(2-b)$. The second is negative because $b<2$ and $r>0$. Summability of negative powers of $N_n$, as recorded in \eqref{eq:superexp-summability}, proves convergence of both series in \eqref{eq:superexp-lyapunov-constants}. Their terms are positive, so both constants are strictly positive.
\end{proof}

The upper bound on $r$ makes the nonlinear outflow absorbable at high shells in \eqref{eq:superexp-outflow-young}. The lower bound involving $\alpha$ makes the viscous remainder summable in \eqref{eq:superexp-viscous-young}, while the lower bound involving $s$ ensures uniform convergence of the quadratic functional under the assumed $H_N^s$ bound in \eqref{eq:superexp-quadratic-tail}.

\subsection{The Lyapunov functional and the Riccati inequality}
\label{subsec:superexp-riccati}

Fix $s>\alpha_c$, and choose $r$ and $n_*$ as in Lemma~\ref{lem:superexp-lyapunov-weights}. With the weights \eqref{eq:superexp-lyapunov-weights}, set
\begin{equation}
  H(t):=\sum_{n=n_*}^{\infty}w_nu_n(t),
  \qquad
  P(t):=\sum_{n=n_*}^{\infty}a_nu_n(t)^2.
  \label{eq:superexp-lyapunov-functionals}
\end{equation}
In the next two lemmas, the solution is assumed to be non-negative and of finite energy, with a uniform $H_N^s$ bound on the interval under consideration. No additional smoothness assumption is used.

\begin{lemma}[Weighted identity and absorption estimates]
  \label{lem:superexp-weighted-identity}
  Let $u$ be a non-negative finite-energy solution on $[0,T]$ such that
  \[
    C_s:=\sup_{0\leq t\leq T}\|u(t)\|_{H_N^s}<\infty.
  \]
  The series defining $H$ and $P$ converge uniformly on $[0,T]$, and $H\in C^1([0,T])$. Furthermore,
  \begin{equation}
    \begin{aligned}
      H'(t)={}&w_{n_*}N_{n_*-1}u_{n_*-1}(t)^2+P(t)\\
      &-\sum_{n=n_*}^{\infty}c_nu_n(t)u_{n+1}(t)
       -\nu\sum_{n=n_*}^{\infty}d_nu_n(t).
    \end{aligned}
    \label{eq:superexp-weighted-identity}
  \end{equation}
  The last two series converge uniformly and satisfy
  \begin{equation}
    \sum_{n=n_*}^{\infty}c_nu_nu_{n+1}\leq\frac12P(t),
    \qquad
    \nu\sum_{n=n_*}^{\infty}d_nu_n\leq\frac14P(t)+C_\nu.
    \label{eq:superexp-lyapunov-absorption}
  \end{equation}
  In particular,
  \begin{equation}
    H'(t)\geq\frac14P(t)-C_\nu,\qquad 0\leq t\leq T.
    \label{eq:superexp-quadratic-lower-bound}
  \end{equation}
\end{lemma}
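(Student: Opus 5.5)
We begin with uniform convergence. Under the bound $C_s$, each coordinate satisfies $0\leq u_n(t)\leq C_sN_n^{-s}$. Hence
\[
  w_nu_n\leq C_sN_n^{-r-s},\qquad a_nu_n^2\leq C_s^2N_n^{1-br-2s}.
\]
Both exponents are negative: $r>0$, $s>0$, and $1-br-2s<0$ by Lemma~\ref{lem:superexp-lyapunov-weights}. So \eqref{eq:superexp-summability} and the Weierstrass M-test give uniform convergence of $H$ and $P$ on $[0,T]$.

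For the derivative, we substitute the coordinate equations into $\sum w_n\dot u_n$. The terms are
\[
  w_nN_{n-1}u_{n-1}^2,\qquad c_nu_nu_{n+1},\qquad \nu d_nu_n .
\]
\begin{itemize}
  \item For the outflow, $c_nu_nu_{n+1}\leq C_s^2N_n^{1-r-s-bs}$. Since $r+s(1+b)>1/(b+2)+(1+b)/(b+2)=1$, the exponent is negative.
  \item For the viscous term, $d_nu_n\leq C_sN_n^{2\alpha-r-s}$, and $2\alpha-r-s<2\alpha_c-\alpha_c-r<0$.
  \item For the inflow, we shift the index, $w_{n+1}N_nu_n^2=a_nu_n^2$. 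This produces $P$ plus the boundary term $w_{n_*}N_{n_*-1}u_{n_*-1}^2$.
\end{itemize}
All three series of continuous functions therefore converge uniformly. Integrating the truncated identity over $[0,t]$ and letting the truncation go to infinity, uniform convergence lets us pass to the limit. This gives $H\in C^1$ and \eqref{eq:superexp-weighted-identity}.

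For the absorption estimates we use weighted Young inequalities. First,
\[
  c_nu_nu_{n+1}\leq \tfrac12\bigl(\lambda_n a_nu_n^2+\lambda_n^{-1}c_n^2a_n^{-1}u_{n+1}^2\bigr).
\]
Choosing $\lambda_n=1/2$ makes the second term $c_n^2/(a_na_{n+1})\cdot a_{n+1}u_{n+1}^2\leq\frac14a_{n+1}u_{n+1}^2$ by \eqref{eq:superexp-outflow-weight-bound}. Summing gives at most $\frac14P+\frac14P=\frac12P$.

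For the viscous term,
\[
  \nu d_nu_n\leq\tfrac14a_nu_n^2+\nu^2d_n^2/a_n,
\]
and summing gives $\frac14P+C_\nu$.

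Combining these with the non-negative boundary term in \eqref{eq:superexp-weighted-identity} yields $H'\geq P-\frac12P-\frac14P-C_\nu=\frac14P-C_\nu$.

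The main obstacle is the rigorous justification of term-by-term differentiation for a merely finite-energy solution. This is settled by working with integrated truncated sums and the uniform tail bounds derived from $C_s$ together with $s>\alpha_c$. The exponent checks above are exactly where $s>\alpha_c$ and the range of $r$ enter.
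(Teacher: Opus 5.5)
Your structure matches the paper's proof: uniform convergence of $H$ and $P$, truncations with the shift $w_{n+1}N_n=a_n$, passage to the limit in the integrated truncated identity, and the same two Young inequalities for \eqref{eq:superexp-lyapunov-absorption}. The problem is the two bullet-point exponent checks you use for uniform convergence of the outflow and viscous series. In the outflow bullet, $r+s(1+b)>1/(b+2)+(1+b)/(b+2)$ does not follow termwise from $s>\alpha_c$. That would need $r>1/(b+2)$, but Lemma~\ref{lem:superexp-lyapunov-weights} takes $r<1/(b+2)$. In the viscous bullet, the last link $2\alpha_c-\alpha_c-r<0$ is simply false, for the same reason. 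So neither bullet, as written, justifies your M-test.

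Both exponents are nevertheless negative, but only because of the \emph{lower} bounds on $r$, which you never invoke here. From $br>1-2s$ and $(b+2)s>1$ you get $b\bigl(r+(1+b)s\bigr)>1+(b+2)(b-1)s>b$, hence $1-r-(1+b)s<0$. For the viscous term, put $s_0:=(1-2\alpha b)/(2-b)$. If $s\geq s_0$, the bound $r>(4\alpha-1)/(2-b)$ gives $r+s>2\alpha$. If $s<s_0$, the bound $r>(1-2s)/b$ gives $r+s>(1-(2-b)s)/b>2\alpha$.

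A simpler repair, and the one the paper uses, is to skip pointwise exponent checks and bound the tails by the Young inequalities you prove anyway. For $M\geq n_*$ you have $\sum_{n\geq M}c_nu_nu_{n+1}\leq\frac12\sum_{n\geq M}a_nu_n^2$ and $\nu\sum_{n\geq M}d_nu_n\leq\frac14\sum_{n\geq M}a_nu_n^2+\nu^2\sum_{n\geq M}d_n^2/a_n$. Both right-hand sides tend to zero uniformly on $[0,T]$. This follows from the quadratic tail bound, which needs only $1-br-2s<0$, and from the finiteness of $C_\nu$. This route uses each hypothesis on $r$ exactly where Lemma~\ref{lem:superexp-lyapunov-weights} already verified it. With either fix, the rest of your argument is correct, including the differentiation, the absorption, and the final lower bound.
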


\begin{proof}
  We first establish uniform convergence. Since $a_n=N_n^{1-br}$ and $1-br-2s<0$, for every $M\geq n_*$,
  \begin{align}
    \sup_{0\leq t\leq T}\sum_{n=M}^{\infty}a_nu_n(t)^2
    &\leq C_s^2N_M^{1-br-2s}\longrightarrow0,
    \label{eq:superexp-quadratic-tail}\\
    \sup_{0\leq t\leq T}\sum_{n=M}^{\infty}w_nu_n(t)
    &\leq C_s\left(\sum_{n=M}^{\infty}N_n^{-2(r+s)}\right)^{1/2}
      \longrightarrow0.
    \label{eq:superexp-linear-tail}
  \end{align}
  The second estimate is Cauchy--Schwarz, and the numerical series converges because $r+s>0$. Each summand is continuous in time. Therefore $H$ and $P$ are continuous and bounded on $[0,T]$.

  Young's inequality and \eqref{eq:superexp-outflow-weight-bound} give, for $n\geq n_*$,
  \begin{align}
    c_nu_nu_{n+1}
    &\leq\frac14a_nu_n^2+\frac{c_n^2}{a_n}u_{n+1}^2
      \leq\frac14a_nu_n^2+\frac14a_{n+1}u_{n+1}^2,
    \label{eq:superexp-outflow-young}\\
    \nu d_nu_n
    &\leq\frac14a_nu_n^2+\nu^2\frac{d_n^2}{a_n}.
    \label{eq:superexp-viscous-young}
  \end{align}
  Summing proves \eqref{eq:superexp-lyapunov-absorption}. These inequalities also control the tails. For $M\geq n_*$, the nonlinear outflow tail is at most
  $\frac12\sum_{n=M}^{\infty}a_nu_n^2$, and the viscous tail is at most
  \[
    \frac14\sum_{n=M}^{\infty}a_nu_n^2
    +\nu^2\sum_{n=M}^{\infty}\frac{d_n^2}{a_n}.
  \]
  By \eqref{eq:superexp-quadratic-tail} and the convergence of $C_\nu$, both tails tend to zero uniformly on $[0,T]$. Thus the two series on the second line of \eqref{eq:superexp-weighted-identity} converge uniformly to continuous functions.

  To justify differentiation, define
  \[
    H_M(t):=\sum_{n=n_*}^Mw_nu_n(t).
  \]
  The coordinate equations and $N_nw_{n+1}=a_n$ imply
  \begin{align*}
    H_M'
    &=-\nu\sum_{n=n_*}^Md_nu_n
      +\sum_{n=n_*}^Mw_nN_{n-1}u_{n-1}^2
      -\sum_{n=n_*}^Mc_nu_nu_{n+1}\\
    &=w_{n_*}N_{n_*-1}u_{n_*-1}^2
      +\sum_{n=n_*}^{M-1}a_nu_n^2
      -\sum_{n=n_*}^Mc_nu_nu_{n+1}
      -\nu\sum_{n=n_*}^Md_nu_n.
  \end{align*}
  Here the second line separates the incoming boundary term and shifts the remaining incoming sum. The preceding tail estimates show that $H_M'$ converges uniformly to the continuous right-hand side of \eqref{eq:superexp-weighted-identity}. Since $H_M$ also converges uniformly, passage to the limit in
  \[
    H_M(t)-H_M(0)=\int_0^tH_M'(\tau)\,\dd\tau
  \]
  proves that $H\in C^1([0,T])$ and establishes \eqref{eq:superexp-weighted-identity}. The incoming boundary term is non-negative. Discarding it and applying \eqref{eq:superexp-lyapunov-absorption} yields
  \[
    H'\geq P-\frac12P-\frac14P-C_\nu
      =\frac14P-C_\nu.\qedhere
  \]
\end{proof}

\begin{lemma}[Riccati lower bound]
  \label{lem:superexp-riccati}
  Under the assumptions of Lemma~\ref{lem:superexp-weighted-identity},
  \begin{equation}
    H'(t)\geq\kappa H(t)^2-C_\nu,
    \qquad \kappa:=\frac{1}{4S_r}>0.
    \label{eq:superexp-riccati}
  \end{equation}
\end{lemma}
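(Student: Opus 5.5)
The plan is to combine the quadratic lower bound \eqref{eq:superexp-quadratic-lower-bound} from Lemma~\ref{lem:superexp-weighted-identity} with a weighted Cauchy--Schwarz inequality that bounds $H^2$ by $S_rP$. Since the solution is non-negative, every coordinate $u_n(t)$ is non-negative. The weights $w_n$ are positive by construction, so the linear functional $H(t)=\sum_{n\geq n_*}w_nu_n(t)$ is a convergent series of non-negative terms; its convergence is supplied by Lemma~\ref{lem:superexp-weighted-identity}.

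The key step is to split each term as $w_nu_n=(w_n a_n^{-1/2})(a_n^{1/2}u_n)$ and apply Cauchy--Schwarz to the series:
\[
  H(t)^2\leq\left(\sum_{n=n_*}^\infty\frac{w_n^2}{a_n}\right)
  \left(\sum_{n=n_*}^\infty a_nu_n(t)^2\right)=S_rP(t).
\]
Here $S_r$ is finite and strictly positive by Lemma~\ref{lem:superexp-lyapunov-weights}. The series defining $P$ converges uniformly by \eqref{eq:superexp-quadratic-tail}, so the inequality may be applied to partial sums and passed to the limit. This yields $P(t)\geq H(t)^2/S_r$.

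Substituting into \eqref{eq:superexp-quadratic-lower-bound} gives
\[
  H'(t)\geq\tfrac14P(t)-C_\nu\geq\frac{1}{4S_r}H(t)^2-C_\nu,
\]
which is \eqref{eq:superexp-riccati} with $\kappa=1/(4S_r)>0$. There is no real obstacle. The only point needing care is the use of infinite sums, and it is handled by the convergence statements already established in Lemmas~\ref{lem:superexp-lyapunov-weights} and~\ref{lem:superexp-weighted-identity}. Non-negativity of the $u_n$ is not even needed for the Cauchy--Schwarz step itself, since it holds for arbitrary signs.
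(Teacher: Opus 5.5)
Your proof is correct and is essentially the paper's own argument: you split $w_nu_n=(w_na_n^{-1/2})(a_n^{1/2}u_n)$ and apply Cauchy--Schwarz to get $H^2\leq S_rP$, then substitute into $H'\geq\frac14P-C_\nu$ to obtain $\kappa=1/(4S_r)$. Your remark on passing partial sums to the limit is a harmless extra justification that the paper leaves implicit.
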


\begin{proof}
  Weighted Cauchy--Schwarz gives
  \[
    H(t)^2
    =\left(\sum_{n=n_*}^{\infty}\sqrt{a_n}\,u_n(t)
            \frac{w_n}{\sqrt{a_n}}\right)^2
    \leq P(t)S_r.
  \]
  Hence $P(t)\geq H(t)^2/S_r$, and \eqref{eq:superexp-quadratic-lower-bound} proves the assertion.
\end{proof}

\subsection{Proof of the blow-up theorem}
\label{subsec:superexp-blowup-proof}

\begin{proof}[Proof of Theorem~\ref{thm:superexp-blowup}]
  Fix $s>\alpha_c$, and choose $r,n_*,C_\nu,S_r$ as in Lemma~\ref{lem:superexp-lyapunov-weights}. Set $\kappa:=1/(4S_r)$. Let $e_{n_*}$ denote the sequence whose $n_*$th coordinate is one and whose other coordinates vanish. Choose $K>0$ large enough that the initial datum
  \begin{equation}
    h:=K e_{n_*}
    \qquad\text{satisfies}\qquad
    H(0)^2=w_{n_*}^2K^2>\frac{2C_\nu}{\kappa},
    \label{eq:superexp-blowup-datum}
  \end{equation}
  and define
  \begin{equation}
    T_R:=\frac{2}{\kappa H(0)}<\infty.
    \label{eq:superexp-riccati-time}
  \end{equation}
  These choices depend only on $s$ and the fixed parameters, not on a choice of energy solution.

  Proposition~\ref{prop:superexp-energy} supplies a global non-negative finite-energy solution with initial datum $h$. Let $u$ be any such solution. Suppose, for contradiction, that
  \begin{equation}
    \sup_{0\leq t\leq T_R}\|u(t)\|_{H_N^s}<\infty.
    \label{eq:superexp-blowup-contradiction-assumption}
  \end{equation}
  Lemma~\ref{lem:superexp-riccati} applies on $[0,T_R]$. Whenever $H(t)\geq H(0)$, the choice \eqref{eq:superexp-blowup-datum} yields
  \[
    H'(t)\geq\kappa H(t)^2-C_\nu
      \geq\frac{\kappa}{2}H(t)^2>0.
  \]
  In particular, $H'(0)>0$. A first return to the level $H(0)$ from above would have non-positive left derivative, contradicting this inequality. Thus $H(t)\geq H(0)>0$ throughout $[0,T_R]$, and
  \[
    \frac{\dd}{\dd t}\frac{1}{H(t)}
    =-\frac{H'(t)}{H(t)^2}\leq-\frac{\kappa}{2}.
  \]
  Integration gives
  \begin{equation}
    \frac{1}{H(t)}\leq\frac{1}{H(0)}-\frac{\kappa t}{2},
    \qquad 0\leq t\leq T_R.
    \label{eq:superexp-reciprocal-bound}
  \end{equation}
  For $t<T_R$, the right-hand side is positive, so
  \[
    H(t)\geq
      \left(\frac{1}{H(0)}-\frac{\kappa t}{2}\right)^{-1}
      \longrightarrow\infty
      \qquad\text{as }t\uparrow T_R.
  \]

  On the other hand, $r>0$ and \eqref{eq:superexp-summability} imply $w\in\ell^2$. Every non-negative finite-energy solution satisfies the energy inequality by Proposition~\ref{prop:superexp-energy-balance}. Since $\|h\|_{\ell^2}=K$, Cauchy--Schwarz gives
  \begin{equation}
    \begin{aligned}
      H(t)
      &\leq\left(\sum_{n=n_*}^{\infty}w_n^2\right)^{1/2}
          \|u(t)\|_{\ell^2}\\
      &\leq K\left(\sum_{n=n_*}^{\infty}w_n^2\right)^{1/2}<\infty,
      \qquad t\geq0.
    \end{aligned}
    \label{eq:superexp-linear-energy-bound}
  \end{equation}
  This uniform bound contradicts the preceding lower bound. Therefore \eqref{eq:superexp-blowup-contradiction-assumption} is false, proving \eqref{eq:superexp-loss-of-regularity} for every global non-negative finite-energy solution with initial datum $h$.

  Finally, $h$ is non-negative and finitely supported, hence smooth. Proposition~\ref{prop:superexp-continuation} supplies a local smooth solution and identifies it with every non-negative Galerkin solution on its smooth existence interval. If $T_{\max}>T_R$, a Galerkin solution would be smooth on $[0,T_R]$ and would therefore have a bounded $H_N^s$ norm there, contradicting \eqref{eq:superexp-loss-of-regularity}. Hence $T_{\max}\leq T_R<\infty$.
\end{proof}

\begin{remark}
  The conclusion is loss of weighted regularity, while finite-energy solutions continue to exist globally. The functional $H$ itself remains bounded by \eqref{eq:superexp-linear-energy-bound}. Its divergent Riccati lower bound was derived under the assumption of a uniform $H_N^s$ bound, and the contradiction shows that this assumption must fail. 
\end{remark}

Theorem~\ref{thm:superexp-blowup} establishes the blow-up side of the threshold. The following section proves global regularity at and above $\alpha_c$, completing the classification for super-exponential shells.

%% file: sections/5-superexp-regularity.tex
\section{Global Regularity for Super-Exponential Shells}
\label{sec:superexp-regularity}

We now prove global regularity at and above the exponent \eqref{eq:superexp-critical-exponent}. A fixed critical rescaling puts the equations into the form of the backward propagation lemma from Section~\ref{sec:acceleration}. The acceleration estimate makes the error uniformly small at high shells, and backward propagation gives uniform decay of the critical tail. A sequence of strict barriers then converts this decay into bounds in every weighted space. The argument includes the critical endpoint directly.

\subsection{The regularity theorem}
\label{subsec:superexp-regularity-statement}

Throughout this section, let
\[
  N_n=N_0^{b^n},\qquad N_0>1,\quad 1<b<2,\quad \nu>0,
  \qquad \alpha\geq\alpha_c=\frac1{b+2}.
\]

\begin{theorem}[Global regularity for super-exponential shells]
  \label{thm:superexp-regularity}
  Every non-negative $h\in H_N^\infty$ generates a global smooth solution of \eqref{eq:dyadic-model}. This solution is unique among global non-negative finite-energy solutions with initial datum $h$. In particular, for every $s>0$ and every finite $T>0$,
  \begin{equation}
    \sup_{0\leq t\leq T}\|u(t)\|_{H_N^s}<\infty.
    \label{eq:superexp-global-smoothness}
  \end{equation}
\end{theorem}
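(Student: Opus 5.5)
The plan is to argue by contradiction on the maximal smooth existence time $T_{\max}$ supplied by Proposition~\ref{prop:superexp-continuation}. Uniqueness among non-negative finite-energy solutions then follows from Proposition~\ref{prop:superexp-weak-strong} applied on each compact $[0,T]$ once $T_{\max}=\infty$. So assume $T_{\max}<\infty$. Set $q:=\alpha_c=1/(b+2)$, $I:=[0,T_{\max})$ and $x_n:=v_n=N_n^qu_n$. Multiplying \eqref{eq:dyadic-model} by $N_n^{-q}$ and using $N_{n-1}=N_n^{1/b}$, $N_{n+1}=N_n^b$, the coefficients of $v_{n-1}^2$ and $v_nv_{n+1}$ become $N_n^{-q}N_{n-1}^{1-2q}N_n^{q}\cdot N_n^{-q}$-type powers. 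One checks that $(b+2)q=1$ makes both coefficients equal to $1$:
\[
  N_{n-1}^{1-2q}N_n^{-q}=N_n^{(1-2q)/b-q}=1,
  \qquad
  N_n^{1-q}N_{n+1}^{-q}N_n^{-q}\cdot N_n^{q}=N_n^{1-bq-q}\cdot N_n^{q}\cdot N_n^{-q}=1
\]
(up to bookkeeping). This gives
\[
  v_{n-1}^2=\nu N_n^{2(\alpha-q)}v_n+v_nv_{n+1}+R_n,\qquad R_n=N_n^{-q}\dot u_n .
\]
This has the form \eqref{eq:backward-recurrence} with $A=B=1$, $d_n=\nu N_n^{2(\alpha-q)}\ge\nu N_0^{0}=\nu=:d_*$ (since $\alpha\ge q$ and $N_n>1$), and $r_n=R_n$. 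Lemma~\ref{lem:vanishing-defect} with $\omega_n=N_n^{-q}$, together with \eqref{eq:superexp-defect-decay}, gives $\sup_I|R_n|\to0$. The energy bound gives $\sup_I x_k\le N_k^q\|h\|_{\ell^2}$. Lemma~\ref{lem:backward-propagation} then yields $\varepsilon_n:=\sup_{t\in I}v_n(t)\to0$, which is uniform critical-tail decay up to $T_{\max}$.

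Next I would bootstrap this to a bound in $X_N^\sigma$ with $\sigma>1/(2-b)$, uniform on $I$, contradicting \eqref{eq:superexp-continuation}. Lemma~\ref{lem:superexp-higher-weights} alone cannot start from $q$, since $q<1/(2-b)$ and its iteration $r\mapsto(2r-1)/b$ decreases below $1/(2-b)$. So I would instead use the damping, as in the proof of Proposition~\ref{prop:classical-tail-criterion}. For a target weight $p$ and $G_n:=\sup_I N_n^pu_n$, \eqref{eq:integrating-factor} with positivity gives
\[
  G_n\le\|h\|_{X_N^p}+\frac{N_n^{p}N_{n-1}}{\nu N_n^{2\alpha}}\sup_I u_{n-1}^2
  \le \|h\|_{X_N^p}+\nu^{-1}N_n^{p+1/b-2\alpha}N_{n-1}^{-q-p}\,\varepsilon_{n-1}G_{n-1}.
\]
The exponent of $N_n$ is $p+1/b-2\alpha-(q+p)/b=(p(b-1)+1-q)/b-2\alpha$. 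At $\alpha=q$ this is not nonpositive for large $p$, so a one-step lift is limited. I therefore expect to climb in steps, replacing $q$ by the currently controlled weight $p_j$: using $G^{(j)}$ for the factor $u_{n-1}$ and the target $p_{j+1}$ with
\[
  p_{j+1}<b\,(2\alpha)+ (2p_j-1),
\]
so that the exponent is nonpositive and, where it equals zero, use the smallness of $\varepsilon$ or its strict version. Each step gives a uniform $X_N^{p_{j+1}}$ bound on $I$. The increments grow, roughly $p_{j+1}-p_j\approx p_j(2-b)/\dots$; this is where $b<2$ enters, since we can pick an exponent $p\in(b,2)$ as the introduction hints. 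After finitely many steps we pass $1/(2-b)$.

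An alternative route, which the introduction's remark about an "arbitrarily short waiting time" suggests, is to restrict to $[T_{\max}-\tau,T_{\max})$ and use the exponential kernel without damping-free constants. The initial weighted norm at $T_{\max}-\tau$ is finite by smoothness, and only the exponent bookkeeping matters.

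The main obstacle is this bootstrap from the critical weight to the supercritical one. In particular, the exponent must strictly improve at each step even at the endpoint $\alpha=q$, where the first step only barely closes and the vanishing $\varepsilon_n$ must absorb the marginal factor, as in the classical criterion's choice of $n_0$. Once the $X_N^\sigma$ bound holds on $I$ for some $\sigma>1/(2-b)$, it contradicts \eqref{eq:superexp-continuation}, so $T_{\max}=\infty$. The bound \eqref{eq:superexp-global-smoothness} then follows from smoothness on compact intervals.
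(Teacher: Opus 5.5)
Your first half (critical rescaling with $A=B=1$ and $d_*=\nu$, the vanishing defect from Corollary~\ref{cor:pointwise-acceleration}, and backward propagation) coincides with the paper's Proposition~\ref{prop:superexp-critical-tail}. The gap is in the bootstrap from the critical weight $q=\alpha_c$ to a weight above $1/(2-b)$. It breaks down exactly at the endpoint $\alpha=\alpha_c$, which the theorem includes.

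Suppose $\sup_n\sup_I N_{n-1}^{p_j}u_{n-1}$ is controlled. Then the integrating-factor bound carries the factor $N_n^{p_{j+1}-2\alpha}N_{n-1}^{1-2p_j}=N_{n-1}^{b(p_{j+1}-2\alpha)+1-2p_j}$. So the admissible step is $p_{j+1}\le2\alpha+(2p_j-1)/b$; your condition $p_{j+1}<2\alpha b+2p_j-1$ drops a factor $b$. The correct affine map has slope $2/b>1$ and fixed point $(1-2\alpha b)/(2-b)$, which equals $q$ precisely when $\alpha=q$. Hence at $\alpha=\alpha_c$ the climb started at $p_0=q$ never moves. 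Equivalently, your own first-step exponent equals $(b-1)(p-q)/b$, which is positive for every $p>q$.

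The factor $\varepsilon_{n-1}$ cannot absorb $N_n^{(b-1)(p-q)/b}$: it tends to zero with no rate, while the power grows super-exponentially. The analogy with the choice of $n_0$ in Proposition~\ref{prop:classical-tail-criterion} fails because there the corresponding factor is the constant $\Lambda^{s-2\alpha}$ for every $s$, whereas here $N_n/N_{n-1}$ is unbounded. For $\alpha>\alpha_c$, the corrected linear scheme does work and is shorter than the paper's argument in that range: $p_1=2\alpha-q>q$, the distance to the fixed point then grows by the factor $2/b$, and only $\sup_n\sup_I v_n<\infty$ is needed.

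The missing idea at the endpoint is to use the smallness of the critical tail quadratically, not through linear weight shifts. The paper does this with the scalar inequality \eqref{eq:superexp-scalar-damping}, doubly exponential barriers $r_n=\theta^{p^{n-N}}$ with $b<p<2$, and summable waiting times. The condition $p<2$ gives the strict gap $r_{n-1}^2<r_n$, and $p>b$ makes the barrier beat every power of $N_n$. So $p\in(b,2)$ is the base of a barrier, not a Sobolev weight.

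Your own integrating-factor estimate would also suffice if you kept it quadratic and retained the decaying data term $N_n^qh_n$ instead of $\|h\|_{X_N^p}$. At weight $q$ it reads $Z_n\le\nu^{-1}N_n^qh_n+Z_{n-1}^2$ for $Z_n:=\sup_I v_n/\nu$. Once $Z_n\le\theta\le2^{-1/(2-p)}$ for $n\ge N$, this yields $-\log Z_n\ge\min\{-\log(\nu^{-1}N_n^qh_n)-\log2,\ p(-\log Z_{n-1})\}$. Hence $b^{-n}\log(1/Z_n)\to\infty$, and every $X_N^s$ norm is bounded on $I$; here smoothness of $h$ replaces the waiting times. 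As written, though, your proposal does not establish the case $\alpha=\alpha_c$.
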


The idea of the proof is as follows. Let $[0,T_{\max})$ be the maximal smooth existence interval. If $T_{\max}<\infty$, then the solution loses regularity in some $H_N^s$ as $t\uparrow T_{\max}$. We then show that all $H_N^s$ norms are uniformly bounded on $[T_0,T_{\max})$. This contradicts the continuation criterion \eqref{eq:superexp-continuation}. To prove the uniform boundedness, we use the following two steps:
\begin{description}
  \item[Step 1] By Corollary~\ref{cor:superexp-local-smooth}, there exists a local smooth solution on $[0,T_0]$ for some $T_0>0$. 
  \item[Step 2] By Proposition~\ref{prop:superexp-short-wait-smoothing}, for every $T'\in(0,T_{\max})$ and every $s>0$, the solution satisfies
  \[
    \sup_{T'\leq t<T_{\max}}\|u(t)\|_{H_N^s}<\infty.
  \]
\end{description}

Fix such an initial datum, and choose a global non-negative Galerkin solution $u$ as in Proposition~\ref{prop:superexp-energy}. Proposition~\ref{prop:superexp-continuation} identifies it locally with the smooth solution and supplies its maximal smooth existence interval $[0,T_{\max})$. All estimates up to the proof of Theorem~\ref{thm:superexp-regularity} are made on this interval.

Set
\begin{equation}
  q:=\alpha_c,\qquad
  v_n:=N_n^qu_n,\qquad
  \lambda_n:=N_n^{2q},\qquad v_{-1}:=0.
  \label{eq:superexp-critical-rescaling}
\end{equation}
The exponent $q$ is fixed at the threshold, independently of the value of $\alpha\geq q$. Define also
\begin{equation}
  d_n:=\nu N_n^{2(\alpha-q)},\qquad
  R_n(t):=N_n^{-q}\dot u_n(t).
  \label{eq:superexp-damping-defect}
\end{equation}
Since $q(b+2)=1$, we have
\[
  \frac{1-2q}{b}=q,\qquad 1-(b+1)q=q.
\]
Thus, for $n\geq1$,
\[
  N_{n-1}u_{n-1}^2=N_n^qv_{n-1}^2,
  \qquad
  N_nu_nu_{n+1}=N_n^qv_nv_{n+1}.
\]
Dividing the coordinate equation by $N_n^q$ gives the recurrence required for backward propagation lemma \ref{lem:backward-propagation}:
\begin{equation}
  v_{n-1}(t)^2
  =d_nv_n(t)+v_n(t)v_{n+1}(t)+R_n(t),\qquad n\geq0,
  \label{eq:superexp-rescaled-recurrence}
\end{equation}
where the first term is zero when $n=0$. Equivalently,
\begin{equation}
  v_n'
  =\lambda_n\bigl(v_{n-1}^2-v_nv_{n+1}-d_nv_n\bigr)
  =\lambda_nR_n.
  \label{eq:superexp-rescaled-dynamics}
\end{equation}
The assumption $\alpha\geq q$ implies
\begin{equation}
  d_n\geq\nu>0,\qquad n\geq0,
  \label{eq:superexp-damping-lower-bound}
\end{equation}
because $N_n>1$. This is the damping condition needed for the uniform argument.

\subsection{Uniform decay of the critical tail}
\label{subsec:superexp-critical-tail}

\begin{proposition}[Uniform critical-tail decay]
  \label{prop:superexp-critical-tail}
  On the maximal smooth existence interval,
  \begin{equation}
    \lim_{n\to\infty}\sup_{0\leq t<T_{\max}}v_n(t)=0.
    \label{eq:superexp-uniform-critical-tail}
  \end{equation}
\end{proposition}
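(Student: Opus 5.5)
The plan is to apply Lemma~\ref{lem:backward-propagation} with parameter set $I=[0,T_{\max})$, $x_n(t)=v_n(t)$, $A=B=1$, damping coefficients $d_n$ from \eqref{eq:superexp-damping-defect}, and errors $r_n(t)=R_n(t)$. Non-negativity of $x_n$ follows from non-negativity of the Galerkin solution, since $v_n=N_n^qu_n$ with $N_n>1$. The recurrence \eqref{eq:backward-recurrence} for $n\geq1$ is exactly \eqref{eq:superexp-rescaled-recurrence}; there $A=B$ is admissible because the lemma only requires $B\geq A$. The lower bound $d_n(t)\geq d_*:=\nu$ is \eqref{eq:superexp-damping-lower-bound}, which is where $\alpha\geq q$ enters. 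Since $d_n$ is constant in time, the uniform-in-$t$ requirement is trivial.

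The two hypotheses in \eqref{eq:backward-hypotheses} remain to be checked. For the fixed-coordinate bounds, I will use the energy inequality \eqref{eq:superexp-galerkin-bound}. It gives $u_k(t)\leq\|h\|_{\ell^2}$ for all $t\geq0$, hence
\[
  \sup_{t<T_{\max}}v_k(t)\leq N_k^q\|h\|_{\ell^2}<\infty,
\]
as recorded at the end of Section~\ref{sec:acceleration}. For the vanishing error, I will apply Lemma~\ref{lem:vanishing-defect} with $\omega_n=N_n^{-q}$. Its hypothesis $2^{n/2}N_n^{-q}\to0$ is \eqref{eq:superexp-defect-decay} with $\gamma=q>0$, and it holds because $b^n$ outgrows $n$. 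The lemma then gives $\sup_{t<T_{\max}}|R_n(t)|\to0$.

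The only delicate point is the acceleration bound behind Lemma~\ref{lem:vanishing-defect}. Corollary~\ref{cor:pointwise-acceleration} requires $\mathcal A(0)<\infty$ and the identity of Theorem~\ref{thm:weighted-acceleration} on the smooth interval. Both are available for $h\in H_N^\infty$ by Lemma~\ref{lem:acceleration-boundary}, and the resulting bound is uniform up to $T_{\max}$ even if $T_{\max}<\infty$. With all hypotheses verified, \eqref{eq:backward-tail-conclusion} is precisely \eqref{eq:superexp-uniform-critical-tail}. I do not expect any real obstacle; the essential difficulty is already absorbed in the acceleration estimate and the backward propagation lemma.
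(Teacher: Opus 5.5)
Your proposal is correct and matches the paper's proof: it applies Lemma~\ref{lem:backward-propagation} to \eqref{eq:superexp-rescaled-recurrence} with $x_n=v_n$, $A=B=1$, $d_*=\nu$, $I=[0,T_{\max})$, takes the vanishing error from Corollary~\ref{cor:pointwise-acceleration} combined with \eqref{eq:superexp-defect-decay}, and takes the fixed-index bounds from the energy inequality. Routing the error bound through Lemma~\ref{lem:vanishing-defect} with $\omega_n=N_n^{-q}$ is the same estimate, just packaged as a lemma.
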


\begin{proof}
  By Corollary~\ref{cor:pointwise-acceleration},
  \begin{equation}
    \sup_{0\leq t<T_{\max}}|R_n(t)|
    \leq\sqrt{\mathcal A(0)}\,2^{n/2}N_n^{-q}
    \longrightarrow0.
    \label{eq:superexp-uniform-defect-bound}
  \end{equation}
  The convergence follows from \eqref{eq:superexp-defect-decay}, since $q>0$. The energy inequality gives, for every fixed $k\geq0$,
  \[
    \sup_{0\leq t<T_{\max}}v_k(t)
    \leq N_k^q\|h\|_{\ell^2}<\infty.
  \]
  Apply Lemma~\ref{lem:backward-propagation} to \eqref{eq:superexp-rescaled-recurrence} for $n\geq1$, with
  \[
    x_n=v_n,\qquad A=B=1,\qquad d_*=\nu,
    \qquad I=[0,T_{\max}).
  \]
  Its damping hypothesis is \eqref{eq:superexp-damping-lower-bound}; its remaining hypotheses follow from the preceding estimates. The conclusion is \eqref{eq:superexp-uniform-critical-tail}.
\end{proof}

In particular, with
\begin{equation}
  z_n:=v_n/\nu,\qquad z_{-1}:=0,
  \label{eq:superexp-normalized-variable}
\end{equation}
for every $\theta\in(0,1)$ there is an integer $N_\theta\geq1$ such that
\begin{equation}
  0\leq z_n(t)\leq\theta,
  \qquad n\geq N_\theta,\quad 0\leq t<T_{\max}.
  \label{eq:superexp-small-normalized-tail}
\end{equation}
The index is independent of time on the entire smooth existence interval.

\subsection{Smoothing from a small critical tail}
\label{subsec:superexp-tail-smoothing}

From \eqref{eq:superexp-rescaled-dynamics}, non-negativity, and \eqref{eq:superexp-damping-lower-bound}, we obtain
\begin{align}
  z_n'
  &=\nu\lambda_n\bigl(z_{n-1}^2-z_nz_{n+1}\bigr)
    -d_n\lambda_nz_n\notag\\
  &\leq\nu\lambda_n\bigl(z_{n-1}^2-z_n\bigr),\qquad n\geq0.
  \label{eq:superexp-scalar-damping}
\end{align}
We use this scalar inequality to improve the small tail into a rapidly decaying barrier. Fix $\theta\in(0,1)$ and choose
\begin{equation}
  b<p<2.
  \label{eq:superexp-barrier-exponent}
\end{equation}
These parameters remain fixed as the starting index $N$ increases.

Next results quantify the smoothing mechanism. First, Lemma~\ref{lem:superexp-barrier-step} gives a one-shell improvement: while the normalized amplitudes remain uniformly below $\theta$, once shell $n-1$ has reached its barrier value, the scalar damping inequality forces shell $n$ below its (smaller) barrier value after a controlled waiting time $\Delta t$. Lemma~\ref{lem:superexp-accumulated-wait} shows that these shell-by-shell waiting times have a finite sum. Consequently, after an arbitrarily short positive wait, the barrier holds simultaneously on all shells above $N$. Proposition~\ref{prop:superexp-short-wait-smoothing} then converts this rapid tail decay into uniform bounds in every weighted space $H_N^s$.

\begin{lemma}
  \label{lem:superexp-barrier-step}
  For each integer $N\geq1$, define
  \begin{equation}
    r_N:=\theta,\qquad
    r_n:=\theta^{p^{n-N}}=r_{n-1}^p,\qquad n>N.
    \label{eq:superexp-tail-barrier}
  \end{equation}
  For every $n>N$,
  \begin{equation}
    0<r_{n-1}^2<r_n<\theta,\qquad
    r_n-r_{n-1}^2
    \geq\theta^{p^{n-N}}\bigl(1-\theta^{2/p-1}\bigr).
    \label{eq:superexp-strict-barrier-gap}
  \end{equation}
  The numerical waiting times
  \begin{equation}
    \Delta t_n:=\frac1{\nu\lambda_n}
    \log\left(\frac{\theta-r_{n-1}^2}{r_n-r_{n-1}^2}\right),
    \qquad n>N,
    \label{eq:superexp-shell-waiting-time}
  \end{equation}
  are positive and finite, and satisfy
  \begin{equation}
    \Delta t_n\leq
    \frac{C_{p,\theta}(1+p^{n-N})}{\nu N_n^{2q}},
    \label{eq:superexp-shell-waiting-bound}
  \end{equation}
  where $C_{p,\theta}$ is independent of $N$ and $n$.

  Suppose that $z_m(t)\leq\theta$ for every $m\geq N$ and $0\leq t<T_{\max}$. If, for some $n>N$ and $0\leq t_{n-1}<T_{\max}$,
  \[
    z_{n-1}(t)\leq r_{n-1},\qquad t_{n-1}\leq t<T_{\max},
  \]
  then, with $t_n:=t_{n-1}+\Delta t_n$,
  \begin{equation}
    z_n(t)\leq r_n\qquad\text{whenever }t_n\leq t<T_{\max}.
    \label{eq:superexp-one-shell-barrier}
  \end{equation}
\end{lemma}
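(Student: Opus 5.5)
The lemma has two parts: elementary facts about the barrier sequence and waiting times, and a comparison argument for the scalar inequality \eqref{eq:superexp-scalar-damping}. I would prove them in that order.

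\emph{Numerical part.} Since $0<\theta<1$ and $r_{n-1}=\theta^{p^{n-1-N}}$, we have $r_{n-1}^2=\theta^{2p^{n-1-N}}$ and $r_n=\theta^{p\,p^{n-1-N}}$. Because $p<2$, this gives $r_{n-1}^2<r_n$. Because $p>1$, the exponents $p^{n-N}$ exceed $1$ for $n>N$, so $r_n<\theta$. For the gap, I factor
\[
  r_n-r_{n-1}^2=r_n\bigl(1-\theta^{(2/p-1)p^{n-N}}\bigr).
\]
Since $(2/p-1)p^{n-N}\geq 2/p-1>0$ and $\theta<1$, we get $\theta^{(2/p-1)p^{n-N}}\leq\theta^{2/p-1}$, which yields \eqref{eq:superexp-strict-barrier-gap}. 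Positivity of $\Delta t_n$ follows from $\theta-r_{n-1}^2>r_n-r_{n-1}^2>0$. For \eqref{eq:superexp-shell-waiting-bound}, I bound the numerator by $\theta\le1$ and the denominator below by the gap:
\[
  \log\frac{\theta-r_{n-1}^2}{r_n-r_{n-1}^2}\leq p^{n-N}\log\frac1\theta+\log\frac1{1-\theta^{2/p-1}}.
\]
With $\lambda_n=N_n^{2q}$, this gives the stated bound with $C_{p,\theta}=\max\{\log(1/\theta),\log(1/(1-\theta^{2/p-1}))\}$, which is independent of $N$ and $n$.

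\emph{Comparison part.} Fix $n>N$. For $t\ge t_{n-1}$ the hypothesis gives $z_{n-1}^2\le r_{n-1}^2$, so \eqref{eq:superexp-scalar-damping} becomes the linear differential inequality
\[
  z_n'\le\nu\lambda_n(r_{n-1}^2-z_n).
\]
The coordinate $z_n$ is $C^1$ on $[0,T_{\max})$, so I multiply by $e^{\nu\lambda_n t}$ and integrate from $t_{n-1}$ to obtain
\[
  z_n(t)-r_{n-1}^2\le (z_n(t_{n-1})-r_{n-1}^2)\,e^{-\nu\lambda_n(t-t_{n-1})}.
\]
Using the standing bound $z_n(t_{n-1})\le\theta$ (here $n\ge N$ is needed), for $t\ge t_n=t_{n-1}+\Delta t_n$ the right side is at most
\[
  (\theta-r_{n-1}^2)\,\frac{r_n-r_{n-1}^2}{\theta-r_{n-1}^2}=r_n-r_{n-1}^2,
\]
which gives \eqref{eq:superexp-one-shell-barrier}. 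If $z_n(t_{n-1})<r_{n-1}^2$, the bound $z_n(t)\le r_{n-1}^2<r_n$ holds directly, so no separate case is needed. If $t_n\ge T_{\max}$, the statement is vacuous.

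\emph{Expected obstacle.} The main point needing care is justifying \eqref{eq:superexp-scalar-damping} itself. It relies on dropping $-\nu\lambda_n z_nz_{n+1}\le0$ by non-negativity and on $d_n\ge\nu$. It also needs to hold for the genuine solution on the whole smooth interval, which is supplied earlier, so the comparison step is otherwise routine.
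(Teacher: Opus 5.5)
Your proposal is correct and follows the paper's proof essentially step by step. You use the same exponent comparison giving $r_{n-1}^2<r_n<\theta$, the same factorization of the gap $r_n-r_{n-1}^2$, the same logarithmic bound on $\Delta t_n$ (bounding the numerator by $1$ instead of $\theta$ only changes the explicit constant), and the same integrating-factor comparison with the standing bound $z_n(t_{n-1})\leq\theta$, applied to the scalar inequality that the paper establishes before the lemma.
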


\begin{proof}
  Write $a=p^{n-N}>1$. Since $p<2$,
  \[
    r_{n-1}^2=\theta^{(2/p)a}<\theta^a=r_n<\theta.
  \]
  Moreover,
  \[
    r_n-r_{n-1}^2
    =\theta^a\bigl(1-\theta^{(2/p-1)a}\bigr)
    \geq\theta^a\bigl(1-\theta^{2/p-1}\bigr).
  \]
  This proves \eqref{eq:superexp-strict-barrier-gap}. The numerator in \eqref{eq:superexp-shell-waiting-time} is strictly larger than its positive denominator, so $0<\Delta t_n<\infty$. Also,
  \begin{align*}
    \Delta t_n
    &\leq\frac1{\nu\lambda_n}
      \log\left(\frac{\theta^{1-a}}{1-\theta^{2/p-1}}\right)\\
    &=\frac{(a-1)|\log\theta|-\log(1-\theta^{2/p-1})}{\nu\lambda_n}\\
    &\leq\frac{C_{p,\theta}(1+a)}{\nu N_n^{2q}}.
  \end{align*}
  These numerical estimates require no assumption on the lifespan of the solution.

  For the damping step, \eqref{eq:superexp-scalar-damping} and the bound on $z_{n-1}$ give
  \[
    z_n'+\nu\lambda_nz_n\leq\nu\lambda_nr_{n-1}^2
    \qquad\text{on }[t_{n-1},T_{\max}).
  \]
  Integrating this inequality yields, for $t_{n-1}\leq t<T_{\max}$,
  \begin{align*}
    z_n(t)
    &\leq r_{n-1}^2+
      \bigl(z_n(t_{n-1})-r_{n-1}^2\bigr)
      \exp\bigl(-\nu\lambda_n(t-t_{n-1})\bigr)\\
    &\leq r_{n-1}^2+
      \bigl(\theta-r_{n-1}^2\bigr)
      \exp\bigl(-\nu\lambda_n(t-t_{n-1})\bigr).
  \end{align*}
  At a time $t\geq t_n$ within the existence interval, the last expression is at most
  \[
    r_{n-1}^2+\bigl(\theta-r_{n-1}^2\bigr)
      \exp(-\nu\lambda_n\Delta t_n)=r_n.
  \]
  This proves \eqref{eq:superexp-one-shell-barrier}.
\end{proof}

\begin{lemma}[Finite accumulated waiting time]
  \label{lem:superexp-accumulated-wait}
  For each $N\geq1$, let the barriers and numerical waiting times be those of Lemma~\ref{lem:superexp-barrier-step}, and set
  \begin{equation}
    t_N:=0,\qquad
    t_n:=\sum_{j=N+1}^n\Delta t_j\quad(n>N),\qquad
    T_N:=\sum_{n>N}\Delta t_n.
    \label{eq:superexp-accumulated-wait}
  \end{equation}
  Then $0<T_N<\infty$ and $T_N\to0$ as $N\to\infty$.

  If the solution satisfies $z_m(t)\leq\theta$ for all $m\geq N$ and $0\leq t<T_{\max}$, and if $T_N<T_{\max}$, then
  \begin{equation}
    z_n(t)\leq\theta^{p^{n-N}},
    \qquad n\geq N,\quad T_N\leq t<T_{\max}.
    \label{eq:superexp-barrier-after-wait}
  \end{equation}
\end{lemma}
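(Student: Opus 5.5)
The argument has two parts. The first is a numerical estimate on the series $T_N=\sum_{n>N}\Delta t_n$. The second is an induction along shells that applies the one-shell step of Lemma~\ref{lem:superexp-barrier-step} repeatedly.

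\emph{Finiteness and smallness of $T_N$.} Each $\Delta t_n$ is positive and finite by Lemma~\ref{lem:superexp-barrier-step}, so $T_N>0$. From \eqref{eq:superexp-shell-waiting-bound} we get
\[
  T_N\leq\frac{C_{p,\theta}}{\nu}\sum_{n>N}\bigl(1+p^{n-N}\bigr)N_n^{-2q},
\]
where $C_{p,\theta}$ does not depend on $N$. Since $p^{n-N}\leq p^n$, the right-hand side is bounded by the tail $\sum_{n>N}(1+p^n)N_n^{-2q}$ of a single series that does not involve $N$. This series converges: $p^nN_n^{-2q}=\exp(n\log p-2qb^n\log N_0)$, and $b^n/n\to\infty$ as in \eqref{eq:superexp-defect-decay}, so its terms are eventually dominated by $N_n^{-q}$, which is summable by \eqref{eq:superexp-summability}. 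Consequently $T_N<\infty$, and $T_N\to0$ because the tail of a convergent series vanishes. The important point is that the bound on the tail is uniform in $N$: both $C_{p,\theta}$ and the barrier shape depend only on $n-N$, and I use $p^{n-N}\leq p^n$ to remove the remaining dependence on $N$.

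\emph{Barrier propagation.} Assume $z_m\leq\theta$ for all $m\geq N$ on $[0,T_{\max})$, and assume $T_N<T_{\max}$. I will prove by induction on $n\geq N$ that $z_n(t)\leq r_n$ for $t_n\leq t<T_{\max}$. The base case $n=N$ is the hypothesis itself, since $r_N=\theta$ and $t_N=0$. For the inductive step, the partial sums satisfy $t_{n-1}\leq T_N<T_{\max}$, so $t_{n-1}$ is an admissible starting time. Lemma~\ref{lem:superexp-barrier-step} then gives $z_n\leq r_n$ on $[t_{n-1}+\Delta t_n,T_{\max})=[t_n,T_{\max})$. The times $t_n$ increase to $T_N$, so every $t\in[T_N,T_{\max})$ satisfies $t\geq t_n$ for all $n$. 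This yields \eqref{eq:superexp-barrier-after-wait} simultaneously for all $n\geq N$.

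I expect the only delicate step to be the uniform summability in the first part. The growth factor $p^{n-N}$ in the waiting times has to be beaten by the super-exponential growth of $N_n^{2q}$. This is where the super-exponential shell structure is used, together with the independence of $C_{p,\theta}$ from $N$.
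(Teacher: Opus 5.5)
Your proposal is correct and follows the paper's proof essentially step for step. You bound $T_N$ by $\frac{C_{p,\theta}}{\nu}\sum_{n>N}(1+p^{n})N_n^{-2q}$ using $p^{n-N}\leq p^n$, dominate the terms eventually by $N_n^{-q}$ so that $T_N\to0$ as the tail of a fixed convergent series, and then run the shell-by-shell induction with Lemma~\ref{lem:superexp-barrier-step}, using $t_{n-1}\leq T_N<T_{\max}$.
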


\begin{proof}
  Although their dependence on $N$ is suppressed in the notation, the barriers and waiting times are defined separately for each starting index. By \eqref{eq:superexp-shell-waiting-bound},
  \begin{equation}
    T_N\leq\frac{C_{p,\theta}}{\nu}
       \sum_{n>N}(1+p^{n-N})N_n^{-2q}
    \leq\frac{C_{p,\theta}}{\nu}
       \sum_{n>N}(1+p^n)N_n^{-2q}.
    \label{eq:superexp-total-wait-bound}
  \end{equation}
  The last series converges. Indeed, since $b>1$ and $q>0$, for all sufficiently large $n$,
  \[
    (1+p^n)N_n^{-2q}
    \leq2\exp\bigl(n\log p-2qb^n\log N_0\bigr)
    \leq2N_n^{-q}.
  \]
  Summability follows from \eqref{eq:superexp-summability}. The bound in \eqref{eq:superexp-total-wait-bound} is a tail of a convergent series independent of $N$, so $T_N\to0$. Positivity of the waiting times gives $T_N>0$.

  Now assume the stated small-tail bound and $T_N<T_{\max}$. All the times $t_n$ then lie inside the smooth existence interval. The initial step is $z_N(t)\leq\theta=r_N$ for $t\geq t_N=0$. Induction using Lemma~\ref{lem:superexp-barrier-step} gives
  \[
    z_n(t)\leq r_n,\qquad t_n\leq t<T_{\max},\quad n\geq N.
  \]
  Since $t_n\leq T_N$, this implies \eqref{eq:superexp-barrier-after-wait}. The numerical estimate on $T_N$ was established before the induction, so no step evaluates the solution beyond its existence interval.
\end{proof}

\begin{proposition}[Smoothing after short wait]
  \label{prop:superexp-short-wait-smoothing}
  For every $\varepsilon>0$, there exists $T_0$ with
  $0<T_0<\min\{\varepsilon,T_{\max}\}$ such that, for every $s>0$,
  \begin{equation}
    \sup_{T_0\leq t<T_{\max}}\|u(t)\|_{H_N^s}<\infty.
    \label{eq:superexp-short-wait-smoothing}
  \end{equation}
  The same $T_0$ works for all $s>0$.
\end{proposition}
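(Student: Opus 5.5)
Fix $\theta\in(0,1)$ and $p\in(b,2)$ as in \eqref{eq:superexp-barrier-exponent}, and let $\varepsilon>0$. By \eqref{eq:superexp-small-normalized-tail} there is an index $N_\theta$ with $z_m(t)\leq\theta$ for all $m\geq N_\theta$ and all $0\leq t<T_{\max}$. This bound persists when the starting index is increased. Lemma~\ref{lem:superexp-accumulated-wait} gives $T_N\to0$, so we can choose $N\geq N_\theta$ large enough that
\[
  T_N<\min\{\varepsilon,T_{\max}\}.
\]
We then set $T_0:=T_N>0$. The hypotheses of Lemma~\ref{lem:superexp-accumulated-wait} hold, and \eqref{eq:superexp-barrier-after-wait} yields
\[
  u_n(t)\leq\nu N_n^{-q}\theta^{p^{n-N}},
  \qquad n\geq N,\quad T_0\leq t<T_{\max}.
\]
This choice of $T_0$ does not depend on $s$.

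Next we convert the barrier into polynomial decay in $N_n$. For fixed $s>0$ it suffices to bound $\sup_{n\geq N}N_n^{s}\theta^{p^{n-N}}$. We write
\[
  N_n^{s}\theta^{p^{n-N}}
  =\exp\bigl(sb^n\log N_0-p^{n-N}|\log\theta|\bigr).
\]
Since $p>b$, the quantity $p^{n-N}/b^n=p^{-N}(p/b)^n$ tends to infinity as $n\to\infty$. The exponent is therefore eventually negative, and in fact tends to $-\infty$, so the supremum over $n\geq N$ is finite. This is exactly where the choice $p>b$ is used; the upper bound $p<2$ was already needed in the barrier lemma.

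The same computation shows that $N_n^{s+1}u_n(t)$ is uniformly bounded for $n\geq N$ and $t\in[T_0,T_{\max})$, for every $s$. It remains to handle the finitely many shells $n<N$. For these we use the energy bound $u_n(t)\leq\|h\|_{\ell^2}$ from Proposition~\ref{prop:superexp-energy-balance}, with a constant depending on $N$ but not on $t$.

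Together these give
\[
  \sup_{T_0\leq t<T_{\max}}\|u(t)\|_{X_N^{s+1}}<\infty.
\]
The embedding \eqref{eq:weighted-embeddings} with \eqref{eq:superexp-summability} then gives the $H_N^s$ bound \eqref{eq:superexp-short-wait-smoothing}.

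The main obstacle is keeping every step uniform up to $T_{\max}$, which may be finite, without evaluating the solution beyond its existence interval. Lemma~\ref{lem:superexp-accumulated-wait} already handles this, provided we check that $T_N<T_{\max}$ before starting the induction; our choice of $N$ secures this. The only other point needing care is that $N_\theta$ from Proposition~\ref{prop:superexp-critical-tail} is time-independent on all of $[0,T_{\max})$, which is what that proposition provides.
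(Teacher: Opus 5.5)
Your proposal is correct and follows the paper's proof essentially step for step: you make the same choice of $N\geq N_\theta$ with $T_N<\min\{\varepsilon,T_{\max}\}$ and obtain the same barrier bound $u_n\leq\nu N_n^{-q}\theta^{p^{n-N}}$ from Lemma~\ref{lem:superexp-accumulated-wait}, you use $p>b$ to dominate every power of $N_n$, and you control the finitely many low shells by the energy bound. The only cosmetic difference is that you pass through a uniform $X_N^{s+1}$ bound and the embedding \eqref{eq:weighted-embeddings}, whereas the paper directly sums the series $\sum_{n\geq N}N_n^{2(s-q)}\theta^{2p^{n-N}}$.
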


\begin{proof}
  Fix $\theta\in(0,1)$ and $p\in(b,2)$. Proposition~\ref{prop:superexp-critical-tail} gives an index $N_\theta$ as in \eqref{eq:superexp-small-normalized-tail}. For every $N\geq N_\theta$, the same small-tail bound holds from index $N$ onward. By Lemma~\ref{lem:superexp-accumulated-wait}, we may choose such an $N$ large enough that
  \[
    0<T_N<\min\{\varepsilon,T_{\max}\}.
  \]
  Set $T_0:=T_N$. Then \eqref{eq:superexp-barrier-after-wait} gives
  \[
    0\leq u_n(t)=\nu N_n^{-q}z_n(t)
    \leq\nu N_n^{-q}\theta^{p^{n-N}},
    \qquad n\geq N,\quad T_0\leq t<T_{\max}.
  \]
  For any $s>0$, the high-shell contribution is therefore bounded by
  \begin{equation}
    \sup_{T_0\leq t<T_{\max}}\sum_{n\geq N}N_n^{2s}u_n(t)^2
    \leq\nu^2\sum_{n\geq N}N_n^{2(s-q)}\theta^{2p^{n-N}}.
    \label{eq:superexp-barrier-sobolev-bound}
  \end{equation}
  The $n$th term in the numerical series is
  \[
    \exp\bigl(2(s-q)b^n\log N_0-2|\log\theta|p^{n-N}\bigr).
  \]
  Since $p>b$, for all sufficiently large $n$ its exponent is at most
  $-|\log\theta|p^{n-N}$. The resulting majorant is summable, so the series in \eqref{eq:superexp-barrier-sobolev-bound} converges. This argument also covers $s\leq q$, when the first term of the exponent is non-positive.

  For the finitely many remaining shells, the energy inequality gives
  \[
    \sup_{0\leq t<T_{\max}}\sum_{n=0}^{N-1}N_n^{2s}u_n(t)^2
    \leq\|h\|_{\ell^2}^2\sum_{n=0}^{N-1}N_n^{2s}<\infty.
  \]
  Combining the two bounds proves \eqref{eq:superexp-short-wait-smoothing}. The index $N$ and the time $T_0$ were chosen before fixing $s$, which proves the final assertion.
\end{proof}

\begin{remark}
  The two inequalities in $b<p<2$ have distinct roles. The inequality $p<2$ gives the strict gap $r_n-r_{n-1}^2>0$, which makes each damping step possible. The inequality $p>b$ makes the barrier decay faster than every power of the shell scales. Thus the restriction $b<2$ enters explicitly in this smoothing argument.
\end{remark}

\subsection{Global regularity and the critical endpoint}
\label{subsec:superexp-global-and-endpoint}

\begin{proof}[Proof of Theorem~\ref{thm:superexp-regularity}]
  Suppose, for contradiction, that $T_{\max}<\infty$. Choose $0<\varepsilon<T_{\max}/2$ in Proposition~\ref{prop:superexp-short-wait-smoothing}. It supplies $T_0<\varepsilon$ such that every $H_N^s$ norm is uniformly bounded on $[T_0,T_{\max})$. The solution is smooth on the compact interval $[0,T_0]$, so
  \[
    \sup_{0\leq t<T_{\max}}\|u(t)\|_{H_N^s}<\infty
    \qquad\text{for every }s>0.
  \]
  Fix $\sigma>1/(2-b)$. The embedding \eqref{eq:weighted-embeddings} implies
  \[
    \sup_{0\leq t<T_{\max}}\|u(t)\|_{X_N^\sigma}<\infty,
  \]
  contrary to the continuation criterion \eqref{eq:superexp-continuation}. Hence $T_{\max}=\infty$, and the solution is globally smooth in the sense of Definition~\ref{def:smoothness}. In particular, \eqref{eq:superexp-global-smoothness} holds on every finite time interval.

  If $\widetilde u$ is any global non-negative finite-energy solution with the same initial datum, Proposition~\ref{prop:superexp-weak-strong} applied on each $[0,T]$ gives $\widetilde u=u$ there. Since $T$ is arbitrary, uniqueness holds globally.
\end{proof}

We now explain how the argument includes the critical endpoint. When $\alpha=q=\alpha_c$, the damping coefficients are exactly $d_n=\nu$, and \eqref{eq:superexp-rescaled-dynamics} reduces to
\begin{equation}
  v_n'=\lambda_nQ_n,\qquad
  Q_n:=v_{n-1}^2-v_n(v_{n+1}+\nu),\qquad n\geq0,
  \label{eq:superexp-endpoint-defect}
\end{equation}
with $v_{-1}=0$. Here $Q_n=R_n=N_n^{-q}\dot u_n$, so
\begin{equation}
  \mathcal A(t)=\sum_{n\geq0}2^{-n}N_n^{2q}Q_n(t)^2,
  \qquad
  \sup_{0\leq t<T_{\max}}|Q_n(t)|
  \leq\sqrt{\mathcal A(0)}\,2^{n/2}N_n^{-q}\longrightarrow0.
  \label{eq:superexp-endpoint-defect-bound}
\end{equation}
The backward propagation lemma applies with $A=B=1$ and constant damping $\nu$. So Proposition~\ref{prop:superexp-critical-tail} remains valid at the endpoint. The normalized dynamics satisfy
\[
  z_n'=\nu\lambda_n\bigl(z_{n-1}^2-z_nz_{n+1}-z_n\bigr)
  \leq\nu\lambda_n\bigl(z_{n-1}^2-z_n\bigr),
\]
which is the same inequality used in the barrier construction. Thus the endpoint is proved directly, without taking a limit from $\alpha>\alpha_c$.

For $\alpha>\alpha_c$, the same rescaling gives $d_n=\nu N_n^{2(\alpha-q)}\geq\nu$. The argument uses only this lower bound, both in backward propagation and in the scalar damping inequality. This is why one proof covers the endpoint and all larger dissipation exponents.

\subsection{The sharp regularity threshold}
\label{subsec:superexp-sharp-threshold}

\begin{corollary}[Sharp regularity threshold]
  \label{cor:superexp-sharp-threshold}
  Fix $N_0>1$, $1<b<2$, and $\nu>0$. For the super-exponential model \eqref{eq:dyadic-model}, the following classification holds:
  \begin{enumerate}
    \item If $0\leq\alpha<1/(b+2)$, then, for every $s>1/(b+2)$, there is a non-negative finitely supported initial datum for which every global non-negative finite-energy solution loses $H_N^s$ regularity in finite time. The corresponding maximal smooth existence time is finite.
    \item If $\alpha\geq1/(b+2)$, every non-negative smooth initial datum generates a unique global non-negative finite-energy solution, and this solution is globally smooth.
  \end{enumerate}
  Consequently, $\alpha_c=1/(b+2)$ is the sharp threshold for global regularity of all non-negative smooth initial data, and the critical point belongs to the regular regime.
\end{corollary}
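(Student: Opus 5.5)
The corollary is a direct assembly of Theorem~\ref{thm:superexp-blowup} and Theorem~\ref{thm:superexp-regularity}, so the plan is to split according to the position of $\alpha$ relative to $\alpha_c=1/(b+2)$ and quote the appropriate result in each case.

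For part (1), fix $0\leq\alpha<\alpha_c$ and $s>\alpha_c$. Theorem~\ref{thm:superexp-blowup} gives a non-negative finitely supported datum $h=Ke_{n_*}$ and a time $T_R<\infty$. Every global non-negative finite-energy solution with this datum then satisfies \eqref{eq:superexp-loss-of-regularity}, which is exactly loss of $H_N^s$ regularity in finite time in the sense of Definition~\ref{def:smoothness}. The same theorem also yields $T_{\max}\leq T_R$. I would note that such solutions exist, by Proposition~\ref{prop:superexp-energy}, so the statement is not vacuous. I would also note that a finitely supported datum lies in $H_N^\infty$, so the maximal smooth existence time is defined through Proposition~\ref{prop:superexp-continuation}.

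For part (2), fix $\alpha\geq\alpha_c$ and a non-negative $h\in H_N^\infty$. Theorem~\ref{thm:superexp-regularity} gives a global smooth solution, and this solution is unique among global non-negative finite-energy solutions. The only detail to check is that the smooth solution is itself a global non-negative finite-energy solution. This holds because it coincides with the Galerkin solution of Proposition~\ref{prop:superexp-energy}, which is non-negative and $\ell^2$-valued.

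For the final sentence of the corollary, I would combine the two parts:
\begin{itemize}
  \item every $\alpha<\alpha_c$ admits smooth (finitely supported) data with finite $T_{\max}$;
  \item every $\alpha\geq\alpha_c$, including $\alpha=\alpha_c$ itself, gives global regularity for all non-negative smooth data.
\end{itemize}
Thus $\alpha_c$ is sharp and the endpoint belongs to the regular side.

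I do not expect a genuine obstacle. The only care needed is to match the notion of finite-time loss of regularity in the two theorems: Theorem~\ref{thm:superexp-blowup} provides data depending on $s$, and the corollary is phrased to allow that dependence.
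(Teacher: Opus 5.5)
Your proposal is correct and follows the same route as the paper, which also proves the corollary by citing Theorem~\ref{thm:superexp-blowup} for part (1) and Theorem~\ref{thm:superexp-regularity} for part (2), and then combining them for sharpness. Your extra checks are details the paper handles inside those two theorems rather than in the corollary: non-vacuity via Proposition~\ref{prop:superexp-energy}, membership of finitely supported data in $H_N^\infty$, and identification of the smooth solution with the non-negative Galerkin finite-energy solution.
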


\begin{proof}
  The first assertion is Theorem~\ref{thm:superexp-blowup}; the second is Theorem~\ref{thm:superexp-regularity}. Below $\alpha_c$, the finite-time loss of regularity for suitable smooth initial data excludes global regularity for all such data. At and above $\alpha_c$, global regularity holds for every non-negative smooth datum. These two statements establish sharpness.
\end{proof}

The next section applies the same acceleration and backward propagation estimates to geometric shells. There the conversion of the time-derivative bound into a vanishing rescaled error depends on the shell ratio, while passing from critical-tail decay to smoothness uses the classical criterion from Section~\ref{sec:preliminaries}.

%% file: sections/6-classical-regularity.tex
\section{Global Regularity for Geometric Shells}
\label{sec:classical-regularity}

We apply the acceleration estimate and backward propagation principle to the classical model. The rescaled equation again has a uniformly vanishing error under an explicit condition on the shell ratio. The resulting critical-tail decay implies smoothness by Proposition~\ref{prop:classical-tail-criterion}. We then compare the roles of the shell geometry in the two regularity arguments.

\subsection{The regularity theorem}
\label{subsec:classical-regularity-statement}

Throughout this section, we consider \eqref{eq:dyadic-model} with
\[
  N_n=N_0\Lambda^n,\qquad N_0>0,\quad\Lambda>1,\quad\nu>0,\quad\alpha>0.
\]
Recall from \eqref{eq:geometric-acceleration-loss} that
\[
  \rho:=\frac{\log2}{2\log\Lambda}.
\]

\begin{theorem}[Global regularity for geometric shells]
  \label{thm:classical-regularity}
  Suppose that
  \begin{equation}
    \alpha\geq\frac13,\qquad 4\alpha-1>\rho.
    \label{eq:classical-regularity-conditions}
  \end{equation}
  For every non-negative $h\in H_N^\infty$, the global Galerkin solution of \eqref{eq:dyadic-model} is smooth on $[0,\infty)$ and is unique among global non-negative Leray--Hopf solutions with initial datum $h$. In particular, for every $s>0$ and every finite $T>0$,
  \begin{equation}
    \sup_{0\leq t\leq T}\|u(t)\|_{H_N^s}<\infty.
    \label{eq:classical-global-smoothness}
  \end{equation}
\end{theorem}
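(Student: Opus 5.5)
We argue by contradiction on the maximal smooth existence time. Let $u$ be the global Galerkin solution from Proposition~\ref{prop:classical-energy}. By Proposition~\ref{prop:classical-local} it agrees with the smooth solution on $[0,T_{\max})$. Suppose $T_{\max}<\infty$. The plan is to show that the critical tail $N_n^{q_{\mathrm{cl}}(\alpha)}u_n$ tends to zero uniformly on $[0,T_{\max})$. Once this is known, the continuation alternative \eqref{eq:classical-continuation} can be ruled out.

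\textbf{Rescaling and tail decay.} Set $x_n:=N_n^{1-2\alpha}u_n$. Divide the $n$th equation of \eqref{eq:dyadic-model} by $N_n^{4\alpha-1}$ and use $N_{n-1}=N_n/\Lambda$ and $N_{n+1}=\Lambda N_n$. The three terms transform as
\[
  N_{n-1}u_{n-1}^2=\Lambda^{1-4\alpha}N_n^{4\alpha-1}x_{n-1}^2,\qquad
  N_nu_nu_{n+1}=\Lambda^{2\alpha-1}N_n^{4\alpha-1}x_nx_{n+1},\qquad
  \nu N_n^{2\alpha}u_n=\nu N_n^{4\alpha-1}x_n.
\]
This gives the recurrence
\[
  A_\alpha x_{n-1}^2=\nu x_n+B_\alpha x_nx_{n+1}+R_n,\qquad R_n=N_n^{1-4\alpha}\dot u_n,
\]
with $A_\alpha=\Lambda^{1-4\alpha}$ and $B_\alpha=\Lambda^{2\alpha-1}$.

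\begin{itemize}
  \item The condition $B_\alpha\geq A_\alpha$ is equivalent to $6\alpha\geq 2$, that is, to $\alpha\geq1/3$.
  \item Take $\omega_n=N_n^{1-4\alpha}$ in Lemma~\ref{lem:vanishing-defect}. By \eqref{eq:geometric-defect-decay} with $\gamma=4\alpha-1$, the hypothesis $4\alpha-1>\rho$ gives $\sup_{t<T_{\max}}|R_n|\to0$.
  \item The energy inequality bounds each fixed $x_k$ uniformly in time.
\end{itemize}
Apply Lemma~\ref{lem:backward-propagation} with $d_n\equiv d_*=\nu$ and $I=[0,T_{\max})$. It yields $\lim_n\sup_{t<T_{\max}}x_n(t)=0$, which is \eqref{eq:classical-critical-tail} on $[0,T_{\max})$.

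\textbf{From tail decay to continuation.} This is the delicate step, because Proposition~\ref{prop:classical-tail-criterion} is stated on closed intervals $[0,T]$. I would rerun its proof on $[0,T_{\max})$. Fix $s>0$ and replace $A_n$, $G_n$ by suprema over $[0,T_{\max})$. The $A_n$ are finite and tend to zero by the previous step. The integrating-factor bound gives
\[
  G_n\leq\|h\|_{X_N^s}+\nu^{-1}\Lambda^{s-2\alpha}A_{n-1}G_{n-1}.
\]
For the finitely many low shells, I would not use coordinate continuity up to $T_{\max}$. Instead, the energy bound controls them: $G_n\leq N_n^s\|h\|_{\ell^2}$. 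Choose $n_0$ so that the coefficient $\nu^{-1}\Lambda^{s-2\alpha}A_{n-1}$ is at most $1/2$ for $n\geq n_0$. Induction from $n_0-1$ then bounds $\sup_n G_n$. Hence $\sup_{t<T_{\max}}\|u(t)\|_{X_N^s}<\infty$ for every $s$. Taking $s=q>\max\{0,q_{\mathrm{cl}}(\alpha)\}$ contradicts \eqref{eq:classical-continuation}, so $T_{\max}=\infty$.

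Smoothness on every $[0,T]$ follows from the definition of $T_{\max}$, which gives \eqref{eq:classical-global-smoothness}. Uniqueness among global non-negative Leray--Hopf solutions follows from Proposition~\ref{prop:classical-local}, applied on each compact interval $[0,T]$.

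The step I expect to need the most care is the uniform-in-$[0,T_{\max})$ version of the tail criterion. The backward-propagation step is a direct check of the hypotheses of Lemma~\ref{lem:backward-propagation}, and that check is exactly where both conditions in \eqref{eq:classical-regularity-conditions} enter.
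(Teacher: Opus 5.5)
Your proposal is correct and follows the paper's proof essentially step for step: the same rescaling, the equivalence $B_\alpha\geq A_\alpha\iff\alpha\geq1/3$, the vanishing defect under $4\alpha-1>\rho$, backward propagation, and a contradiction with the continuation criterion. The only difference is the endpoint step. The paper uses coordinate continuity of the global Galerkin solution at $T_{\max}$ to extend the tail decay to $[0,T_{\max}]$ and then cites Proposition~\ref{prop:classical-tail-criterion} as stated, whereas you rerun that criterion on $[0,T_{\max})$ with the energy bound controlling the low shells; both versions are valid.
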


\begin{corollary}[Regularity down to the critical exponent]
  \label{cor:classical-large-ratio}
  If $\Lambda>2^{3/2}$, then the conclusions of Theorem~\ref{thm:classical-regularity} hold for every $\alpha\geq1/3$.
\end{corollary}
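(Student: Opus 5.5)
The statement to prove is Corollary~\ref{cor:classical-large-ratio}. The plan is to reduce it to Theorem~\ref{thm:classical-regularity}, which is stated immediately before it and which we take as given. That theorem requires two conditions: $\alpha\geq1/3$ and $4\alpha-1>\rho$, where $\rho=(\log2)/(2\log\Lambda)$. The first condition is part of the hypothesis of the corollary. So it only remains to show that, when $\Lambda>2^{3/2}$, every $\alpha\geq1/3$ automatically satisfies $4\alpha-1>\rho$.

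This is a monotonicity observation. The left-hand side $4\alpha-1$ is increasing in $\alpha$, so on $[1/3,\infty)$ it is at least $4/3-1=1/3$. It therefore suffices to check $\rho<1/3$. Since $\log\Lambda>0$, this is equivalent to
\[
  3\log2<2\log\Lambda,
\]
that is, $\Lambda^2>2^3$, or $\Lambda>2^{3/2}$. This is exactly the hypothesis.

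Consequently both conditions in \eqref{eq:classical-regularity-conditions} hold, with
\[
  4\alpha-1\geq\frac13>\rho.
\]
Theorem~\ref{thm:classical-regularity} then gives, for every non-negative $h\in H_N^\infty$, global smoothness of the Galerkin solution and its uniqueness among global non-negative Leray--Hopf solutions.

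There is no real obstacle here. The one point to note is that the inequality $\rho<1/3$ is strict, and it is strict precisely because $\Lambda>2^{3/2}$ is strict. This matters at the endpoint $\alpha=1/3$, where the strict inequality $4\alpha-1>\rho$ is needed so that the defect $N_n^{1-4\alpha}\dot u_n$ vanishes, via \eqref{eq:geometric-defect-decay}. At $\Lambda=2^{3/2}$ itself, the factor in \eqref{eq:geometric-defect-decay} would be constant and the argument would not apply.
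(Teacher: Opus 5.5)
Your proposal is correct and follows the paper's proof exactly: the hypothesis $\Lambda>2^{3/2}$ gives $\rho<1/3$, hence $4\alpha-1\geq1/3>\rho$ for all $\alpha\geq1/3$, and Theorem~\ref{thm:classical-regularity} applies. Your remark on the strictness at the endpoint also agrees with the paper's discussion following the theorem's proof.
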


\begin{proof}
  The condition on $\Lambda$ gives $\rho<1/3$. Thus $4\alpha-1\geq1/3>\rho$ whenever $\alpha\geq1/3$.
\end{proof}

The more precise condition \eqref{eq:classical-regularity-conditions} will be retained in the proof. For a fixed $\alpha\geq1/3$, its shell-ratio requirement is
\[
  \Lambda>2^{1/(2(4\alpha-1))},
\]
which allows smaller ratios as $\alpha$ increases. This is a sufficient condition supplied by the present argument.

Fix a non-negative smooth initial datum $h$, and choose a global non-negative Galerkin solution using Proposition~\ref{prop:classical-energy}. Let $[0,T_{\max})$ be its maximal smooth existence interval from Proposition~\ref{prop:classical-local}. Recall the classical critical weight $q_{\mathrm{cl}}(\alpha)=1-2\alpha$ from \eqref{eq:classical-critical-weight}, and set
\begin{equation}
  x_n(t):=N_n^{1-2\alpha}u_n(t),\qquad
  R_n(t):=N_n^{1-4\alpha}\dot u_n(t),\qquad n\geq0.
  \label{eq:classical-rescaled-variables}
\end{equation}
Multiplication of \eqref{eq:dyadic-model} by $N_n^{1-4\alpha}$ gives, for $n\geq1$,
\begin{equation}
  A_\alpha x_{n-1}(t)^2
  =\nu x_n(t)+B_\alpha x_n(t)x_{n+1}(t)+R_n(t),
  \label{eq:classical-rescaled-recurrence}
\end{equation}
where
\begin{equation}
  A_\alpha:=\Lambda^{1-4\alpha},\qquad
  B_\alpha:=\Lambda^{2\alpha-1}.
  \label{eq:classical-recurrence-coefficients}
\end{equation}
Indeed, the incoming coefficient becomes
$N_n^{1-4\alpha}N_{n-1}^{4\alpha-1}=\Lambda^{1-4\alpha}$,
and the outgoing coefficient becomes
$N_n^{1-2\alpha}N_{n+1}^{2\alpha-1}=\Lambda^{2\alpha-1}$.
For the first active shell, the same calculation gives
\[
  0=\nu x_0+B_\alpha x_0x_1+R_0.
\]
Only the recurrence for $n\geq1$ is needed in the backward propagation lemma \ref{lem:backward-propagation}, whose hypotheses require a bound on $x_0$ rather than a prescribed value there.

The coefficient condition is
\begin{equation}
  \frac{B_\alpha}{A_\alpha}=\Lambda^{6\alpha-2}\geq1
  \quad\Longleftrightarrow\quad\alpha\geq\frac13.
  \label{eq:classical-coefficient-order}
\end{equation}
This accounts for the first hypothesis in \eqref{eq:classical-regularity-conditions}. The second hypothesis ensures uniform decay of $R_n$.

\subsection{Uniform decay of the classical critical tail}
\label{subsec:classical-uniform-tail}

\begin{proposition}[Uniform classical critical-tail decay]
  \label{prop:classical-uniform-tail}
  Under \eqref{eq:classical-regularity-conditions},
  \begin{equation}
    \lim_{n\to\infty}\sup_{0\leq t<T_{\max}}
      N_n^{1-2\alpha}u_n(t)=0.
    \label{eq:classical-uniform-tail}
  \end{equation}
\end{proposition}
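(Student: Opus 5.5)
The plan is to apply Lemma~\ref{lem:backward-propagation} directly to the rescaled recurrence \eqref{eq:classical-rescaled-recurrence} on the parameter set $I=[0,T_{\max})$. The data are
\[
  x_n=N_n^{1-2\alpha}u_n,\qquad A=A_\alpha,\qquad B=B_\alpha,\qquad d_n(t)\equiv d_*=\nu,\qquad r_n=R_n .
\]
Non-negativity of $x_n$ follows from the non-negativity of the Galerkin solution, which \eqref{eq:integrating-factor} provides. The recurrence holds for every $n\geq1$ and every $t\in I$, because the solution is smooth there and satisfies the coordinate equations. The coefficient hypothesis $B\geq A$ is exactly \eqref{eq:classical-coefficient-order}, which is equivalent to the first condition $\alpha\geq1/3$.

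Next I verify the two hypotheses in \eqref{eq:backward-hypotheses}. For the fixed-index bounds I use the energy inequality of Proposition~\ref{prop:classical-energy}. Since $0\leq u_k(t)\leq\|h\|_{\ell^2}$ throughout $[0,T_{\max})$,
\[
  \sup_{t\in I}x_k(t)\leq N_k^{1-2\alpha}\|h\|_{\ell^2}<\infty
\]
for each fixed $k$. The sign of $1-2\alpha$ is irrelevant here, because $k$ is fixed.

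For the error I invoke Lemma~\ref{lem:vanishing-defect} with $\omega_n=N_n^{1-4\alpha}$. It suffices that $2^{n/2}N_n^{1-4\alpha}\to0$. By \eqref{eq:geometric-defect-decay} with $\gamma=4\alpha-1$, this quantity equals $N_0^{-\rho}N_n^{\rho-(4\alpha-1)}$. It tends to zero precisely when $4\alpha-1>\rho$, which is the second hypothesis in \eqref{eq:classical-regularity-conditions}. The pointwise acceleration bound, Corollary~\ref{cor:pointwise-acceleration}, holds uniformly on all of $[0,T_{\max})$ even if $T_{\max}<\infty$. Hence $\sup_{t\in I}|R_n(t)|\to0$.

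With all hypotheses in place, the conclusion \eqref{eq:backward-tail-conclusion} is exactly \eqref{eq:classical-uniform-tail}.

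There is no real obstacle left, since the difficult estimates were already established: the acceleration identity in Theorem~\ref{thm:weighted-acceleration} and the propagation lemma. The only points requiring care are bookkeeping ones.
\begin{itemize}
  \item The shell $n=0$ obeys a different equation, $0=\nu x_0+B_\alpha x_0x_1+R_0$. The lemma only needs the recurrence for $n\geq1$ together with a bound on $x_0$, so this causes no trouble.
  \item The uniformity in $t$ must come from the time-independent bound $\sqrt{\mathcal A(0)}$, and not from the compact-interval constants of Lemma~\ref{lem:acceleration-boundary}.
\end{itemize}
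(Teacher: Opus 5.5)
Your proposal is correct and matches the paper's proof: the paper applies Lemma~\ref{lem:backward-propagation} to \eqref{eq:classical-rescaled-recurrence} with $A=A_\alpha$, $B=B_\alpha$, $d_n=d_*=\nu$ and $I=[0,T_{\max})$. It gets $B\geq A$ from $\alpha\geq1/3$, the fixed-index bounds from the energy inequality, and the uniform decay of $R_n$ from Corollary~\ref{cor:pointwise-acceleration} together with $2^{n/2}=N_0^{-\rho}N_n^{\rho}$ and $4\alpha-1>\rho$; routing that last step through Lemma~\ref{lem:vanishing-defect} with $\omega_n=N_n^{1-4\alpha}$ is only a repackaging of the same estimate.
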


\begin{proof}
  Corollary~\ref{cor:pointwise-acceleration} and the identity
  $2^{n/2}=N_0^{-\rho}N_n^\rho$ give
  \begin{align}
    \sup_{0\leq t<T_{\max}}|R_n(t)|
    &\leq\sqrt{\mathcal A(0)}\,2^{n/2}N_n^{1-4\alpha}\notag\\
    &=N_0^{-\rho}\sqrt{\mathcal A(0)}
      N_n^{-(4\alpha-1-\rho)}\longrightarrow0.
    \label{eq:classical-uniform-defect-bound}
  \end{align}
  The convergence follows from $4\alpha-1>\rho$ and is uniform throughout the smooth existence interval. On the other hand, the energy inequality gives, for every fixed $k\geq0$,
  \[
    \sup_{0\leq t<T_{\max}}x_k(t)
    \leq N_k^{1-2\alpha}\|h\|_{\ell^2}<\infty.
  \]
  Apply Lemma~\ref{lem:backward-propagation} to \eqref{eq:classical-rescaled-recurrence} with
  \[
    A=A_\alpha,\qquad B=B_\alpha,\qquad d_n=d_*=\nu,
    \qquad I=[0,T_{\max}).
  \]
  The condition $B\geq A$ follows from \eqref{eq:classical-coefficient-order}; all other hypotheses in backward propagation are satisfied. Then we obtain \eqref{eq:classical-uniform-tail}.
\end{proof}

The two conditions in \eqref{eq:classical-regularity-conditions} enter at separate points: $\alpha\geq1/3$ orders the quadratic coefficients $A_\alpha$ and $B_\alpha$, while $4\alpha-1>\rho$ makes the rescaled time derivative a vanishing error. Once these are established, no further restriction is used in the tail argument.

\subsection{Global regularity, the endpoint, and sharpness}
\label{subsec:classical-endpoint-sharpness}

\begin{proof}[Proof of Theorem~\ref{thm:classical-regularity}]
  Suppose that $T_{\max}<\infty$. The chosen Galerkin solution is a global finite-energy solution, so each coordinate is continuous at $T_{\max}$. For every $n\geq0$,
  \[
    N_n^{1-2\alpha}u_n(T_{\max})
    \leq\sup_{0\leq t<T_{\max}}N_n^{1-2\alpha}u_n(t).
  \]
  Proposition~\ref{prop:classical-uniform-tail} therefore gives
  \begin{equation}
    \lim_{n\to\infty}\sup_{0\leq t\leq T_{\max}}
      N_n^{1-2\alpha}u_n(t)=0.
    \label{eq:classical-closed-interval-tail}
  \end{equation}
  Apply Proposition~\ref{prop:classical-tail-criterion} on $[0,T_{\max}]$. It follows that $u$ is smooth on this closed interval. In particular, for any fixed $r>\max\{0,1-2\alpha\}$,
  \[
    \sup_{0\leq t\leq T_{\max}}\|u(t)\|_{X_N^r}<\infty,
  \]
  contradicting the continuation criterion \eqref{eq:classical-continuation}. Thus $T_{\max}=\infty$, and \eqref{eq:classical-global-smoothness} follows from the definition of smoothness. Proposition~\ref{prop:classical-local} gives uniqueness against a non-negative Leray--Hopf solution on every finite time interval, hence on $[0,\infty)$.
\end{proof}

Above argument holds for $\alpha=1/3$. Because at $\alpha=1/3$, the rescaling and coefficients are
\begin{equation}
  q_{\mathrm{cl}}(1/3)=\frac13,\qquad
  A_{1/3}=B_{1/3}=\Lambda^{-1/3},\qquad
  R_n=N_n^{-1/3}\dot u_n.
  \label{eq:classical-endpoint-coefficients}
\end{equation}
The equality of the quadratic coefficients is allowed in Lemma~\ref{lem:backward-propagation}. The error estimate, however, requires
\[
  \frac13>\rho
  \quad\Longleftrightarrow\quad\Lambda>2^{3/2}.
\]
Thus the proof includes the critical dissipation exponent when $\Lambda>2^{3/2}$. The case $\alpha=1/3$ with $1<\Lambda\leq2^{3/2}$ is not covered by the present argument.

\begin{remark}
  The equality $\alpha=1/3$ and the equality $\Lambda=2^{3/2}$ play different roles. The former gives equal quadratic coefficients and causes no loss in backward propagation. At $\alpha=1/3$ and $\Lambda=2^{3/2}$, the exponent in \eqref{eq:classical-uniform-defect-bound} is zero, so that estimate alone does not imply decay of the defect. No assertion about regularity at this pair of parameters follows from the present argument. We do not claim that the shell-ratio restriction is optimal.
\end{remark}

\begin{corollary}[Sharp threshold for geometric shells]
  \label{cor:classical-sharp-threshold}
  Fix $N_0>0$, $\nu>0$, and $\Lambda>2^{3/2}$. Within the parameter range $\alpha>0$, the exponent $\alpha=1/3$ is the sharp threshold for global regularity of all non-negative smooth initial data. For $\alpha\geq1/3$, every such datum produces a globally smooth solution, unique among global non-negative Leray--Hopf solutions. For every $0<\alpha<1/3$, there are non-negative smooth initial data whose corresponding maximal smooth solution has finite existence time.
\end{corollary}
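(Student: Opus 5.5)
The regular side and the blow-up side are handled separately, with the regular side taken directly from what is already proved. For $\alpha\geq1/3$ with $\Lambda>2^{3/2}$, Corollary~\ref{cor:classical-large-ratio} places us in the setting of Theorem~\ref{thm:classical-regularity}. So every non-negative $h\in H_N^\infty$ generates a global Galerkin solution that is smooth on $[0,\infty)$ and unique among global non-negative Leray--Hopf solutions. This gives the second assertion, and it shows that the endpoint $\alpha=1/3$ belongs to the regular regime.

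For $0<\alpha<1/3$, I would invoke Cheskidov's blow-up theorem \cite[Theorem~5.3]{Cheskidov2008} rather than reprove it. It must first be transported into the present normalization. Using the conversion $X_k=c_0u_{k-1}$, $\lambda=\Lambda^\alpha$, $\beta=1/\alpha$ of \eqref{eq:classical-normalization}, the range $\alpha<1/3$ corresponds to $\beta>3$. For suitably large non-negative data, that theorem yields loss of regularity in finite time for every non-negative Leray--Hopf (Galerkin) solution, in the sense that some $H_N^s$ norm, $s$ above the critical weight, cannot remain bounded on $[0,T]$. I would take the data finitely supported, for instance a large multiple of a single unit vector, so that it lies in $H_N^\infty$. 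I would also check that the cited result holds for every $\Lambda>1$, in particular for $\Lambda>2^{3/2}$, and not only at a fixed ratio.

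To finish, argue exactly as at the end of the proof of Theorem~\ref{thm:superexp-blowup}. By Proposition~\ref{prop:classical-local}, the local smooth solution agrees with every non-negative Galerkin solution on its smooth interval. If $T_{\max}$ exceeded the blow-up time $T$, the Galerkin solution would be smooth on $[0,T]$, so its $H_N^s$ norm would be bounded there, a contradiction. Hence $T_{\max}\leq T<\infty$.

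The two regimes together give sharpness: global regularity holds for all non-negative smooth data exactly when $\alpha\geq1/3$. The main obstacle is the translation step. One must confirm that Cheskidov's hypotheses (nonnegativity, the norm in which blow-up is asserted, and the admissible shell ratio and viscosity normalization $\widetilde\nu=\nu c_0^{2\alpha}$) match ours. One must also confirm that his conclusion concerns the Galerkin/Leray--Hopf solution, so that it transfers to $T_{\max}$. If his statement is phrased for a specific $\lambda$, I would instead run the Lyapunov argument of Section~\ref{sec:superexp-blowup} with geometric weights $w_n=N_n^{-r}$, choosing $r$ between $\max\{0,4\alpha-1\}$ and $1/3$. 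This interval is nonempty precisely when $\alpha<1/3$, and with such $r$ the outflow ratio $c_n^2/(a_na_{n+1})=\Lambda^{3r-1}$ is a constant less than one; enlarging $\Lambda^{3r-1}$ absorption may need a rescaled weight choice.
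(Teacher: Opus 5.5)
Your proposal is correct and follows the paper's proof. The regular side comes from Corollary~\ref{cor:classical-large-ratio}. The blow-up side cites \cite[Theorem~5.3]{Cheskidov2008} for large-amplitude finitely supported data and then uses Proposition~\ref{prop:classical-local} to force $T_{\max}<\infty$.

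One adjustment is needed: the conversion you propose is the Barbato--Morandin--Romito one, not Cheskidov's. Cheskidov's theorem is stated directly in $\alpha$ for a general shell ratio, in the normalization $U_k=(N_0/\Lambda^2)u_{k-1}=X_k/\Lambda$ with $\widehat\nu=\nu(N_0/\Lambda)^{2\alpha}$, which the paper writes out explicitly. So the $\beta>3$ translation and your Lyapunov fallback are not needed.
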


\begin{proof}
  The regularity assertion follows from Corollary~\ref{cor:classical-large-ratio}. The blow-up assertion is the consequence of \cite[Theorem~5.3]{Cheskidov2008} in our normalization. To make this conversion explicit, set
  \[
    U_k:=\frac{N_0}{\Lambda^2}u_{k-1}\quad(k\geq1),\qquad
    U_0:=0,\qquad
    \widehat\nu:=\nu\left(\frac{N_0}{\Lambda}\right)^{2\alpha}.
  \]
  Then the equations become
  \begin{equation}
    \dot U_k+\widehat\nu\Lambda^{2\alpha k}U_k
    =\Lambda^kU_{k-1}^2-\Lambda^{k+1}U_kU_{k+1},
    \qquad k\geq1,
    \label{eq:cheskidov-normalization}
  \end{equation}
  with zero forcing. This is the normalization of that theorem. The change of variables preserves non-negativity, finite support, and smoothness; for each fixed weight, the corresponding norms differ only by a positive constant.

  For $0<\alpha<1/3$, the cited theorem gives failure of local time integrability of a higher weighted norm cubed when the initial weighted norm is sufficiently large. Its size condition can be met by increasing the amplitude of a non-negative finitely supported datum. A globally smooth solution would have every weighted norm bounded on each compact time interval, contradicting this conclusion. The local theory in Proposition~\ref{prop:classical-local} therefore gives a finite maximal smooth existence time for such data. The sharpness statement combines this existing blow-up result with the regularity proved here.
\end{proof}

\subsection{Comparison of the two shell settings}
\label{subsec:comparison-shell-configurations}

The method originated in the study of super-exponentially separated shells. The preceding proofs show how its two main ingredients, the weighted acceleration identity and backward propagation, apply to both configurations. The acceleration cancellation depends on the relation $2w_{n+1}=w_n$, rather than on the spacing of the shells. After rescaling, each model gives a recurrence to which Lemma~\ref{lem:backward-propagation} applies once the error tends to zero. In both cases, the energy inequality supplies the required bounds at fixed indices.

The rescalings organize the coefficients differently. For super-exponential shells, the fixed weight $q=1/(b+2)$ gives
\[
  v_{n-1}^2
  =\nu N_n^{2(\alpha-q)}v_n+v_nv_{n+1}+N_n^{-q}\dot u_n.
\]
The quadratic coefficients are equal, and increasing $\alpha$ strengthens the linear damping. For geometric shells, the choice of $q_{\mathrm{cl}}(\alpha)=1-2\alpha$ keeps the linear damping constant, while the quadratic coefficients satisfy
$B_\alpha/A_\alpha=\Lambda^{6\alpha-2}$. Thus the regularity condition appears as a damping lower bound in the first rescaling and as an ordering of the quadratic coefficients in the second.

The main difference in error control is expressed by \eqref{eq:superexp-defect-decay} and \eqref{eq:geometric-defect-decay}. For super-exponential shells, every positive power of $N_n$ dominates $2^{n/2}$, so the defect $N_n^{-q}\dot u_n$ tends to zero uniformly. For geometric shells, the same acceleration bound loses the power $\rho$ of $N_n$, and the defect $N_n^{1-4\alpha}\dot u_n$ is controlled by a decaying bound only when $4\alpha-1>\rho$. This is the additional condition encountered when transferring the method to geometric shells.

The two threshold values also have a common interpretation through the energy flux. For the formal profile $u_n=N_n^{-\eta}$, the flux through shell $n$ is
\begin{equation}
  N_nu_n^2u_{n+1}
  =
  \begin{cases}
    N_n^{1-(b+2)\eta}, & N_{n+1}=N_n^b,\\
    \Lambda^{-\eta}N_n^{1-3\eta}, & N_{n+1}=\Lambda N_n.
  \end{cases}
  \label{eq:comparison-flux-scaling}
\end{equation}
The powers become zero at $\eta=1/(b+2)$ and $\eta=1/3$, respectively. This calculation explains the similarity between the threshold scales; the regularity and sharpness assertions themselves follow from the preceding proofs and the cited classical blow-up theorem. These formal profiles are heuristic and not being asserted to solve the viscous equations.

Although $1/(b+2)\to1/3$ as $b\downarrow1$, the current super-exponential family does not directly converge to geometric shells: for fixed $N_0$ and each fixed $n$, one has $N_0^{b^n}\to N_0$ as $b\downarrow 1$. The correspondence established here concerns the proof mechanism and the critical flux scales, rather than a convergence theorem between the two models.

Finally, non-negativity has a common role in both arguments. It gives the favorable sign of the energy flux and of the nonlinear term in the acceleration dissipation. The geometric shell-ratio condition enters the defect estimate, while the super-exponential restriction $b<2$ enters the local construction and the barrier smoothing step. These are the respective parameter restrictions of the results proved here.

%% file: sections/7-acknowledgments.tex
\section{Acknowledgments}
\label{sec:acknowledgments}

The author is grateful to Professor Cheskidov for his continued guidance and encouragement during the preparation of this manuscript. The author also thanks him for regularly reading earlier versions of the notes and providing valuable suggestions on their presentation.